\newcommand*{\mailto}[1]{\href{mailto:#1}{\nolinkurl{#1}}}
\newcommand{\arxiv}[1]{\href{http://arxiv.org/abs/#1}{arXiv:#1}}
\newtheorem{theorem}{Theorem}[section]
\newtheorem{definition}[theorem]{Definition}
\newtheorem{lemma}[theorem]{Lemma}
\newtheorem{example}[theorem]{Example}
\newtheorem{proposition}[theorem]{Proposition}
\newtheorem{corollary}[theorem]{Corollary}
\newtheorem{hypothesis}[theorem]{Hypothesis}
\newtheorem{remark}[theorem]{Remark}
\newcommand{\R}{{\mathbb R}}
\newcommand{\N}{{\mathbb N}}
\newcommand{\Z}{{\mathbb Z}}
\newcommand{\C}{{\mathbb C}}
\newcommand{\spr}[2]{( #1 , #2 )}
\newcommand{\be}{\begin{equation}}
\newcommand{\ee}{\end{equation}}
\newcommand{\comp}{\mathrm{c}}
\newcommand{\gD}{\mathfrak{D}}
\newcommand{\gH}{\mathfrak{H}}
\newcommand{\gB}{\beta}
\newcommand{\gA}{\alpha}
\newcommand{\gt}{\mathfrak{t}}
\newcommand{\gq}{\mathfrak{q}}
\newcommand{\gb}{\mathfrak{b}}
\newcommand{\gd}{{d}}
\newcommand{\rH}{\mathrm{H}}
\newcommand{\rL}{\mathrm{L}}
\newcommand{\rN}{\mathrm{N}}
\newcommand{\rD}{\mathrm{D}}
\newcommand{\im}{\mathrm{Im}}
\newcommand{\dom}[1]{\mathrm{dom}(#1)}
\newcommand{\loc}{\mathrm{loc}}
\newcommand{\ran}[1]{\mathrm{ran}(#1)}
\newcommand{\supp}[1]{\mathrm{supp}(#1)}
\newcommand{\qd}{{[1]}}
\newcommand{\Llocp}{L^1_{\mathrm{loc}}((a,b);dP)}
\newcommand{\Lab}{L^2(a,b)}
\newcommand{\eps}{\varepsilon}
\newcommand{\Sig}{\Sigma}
\numberwithin{equation}{section}
\begin{document}

\title[$\delta'$-interactions on Cantor-type sets]{One-dimensional Schr\"odinger operators\\ with $\delta'$-interactions on Cantor-type sets}

\author[J.\ Eckhardt]{Jonathan Eckhardt}
\address{School of Computer Science \& Informatics\\ Cardiff University\\
5 The Parade\\ Cardiff CF24 \\ UK}
\email{\mailto{j.eckhardt@cs.cardiff.ac.uk}}

\author[A.\ Kostenko]{Aleksey Kostenko}
\address{Faculty of Mathematics\\ University of Vienna\\
Oskar-Morgenstern-Platz 1\\ 1090 Wien\\ Austria}
\email{\mailto{duzer80@gmail.com}; \mailto{Oleksiy.Kostenko@univie.ac.at}}

\author[M.\ Malamud]{Mark Malamud}
\address{Institute of Applied Mathematics and Mechanics\\
NAS of Ukraine\\ R. Luxemburg str. 74\\
Donetsk 83114\\ Ukraine}
\email{\mailto{mmm@telenet.dn.ua}}

\author[G.\ Teschl]{Gerald Teschl}
\address{Faculty of Mathematics\\ University of Vienna\\
Oskar-Morgenstern-Platz 1\\ 1090 Wien\\ Austria\\ and International Erwin Schr\"odinger
Institute for Mathematical Physics\\ Boltzmanngasse 9\\ 1090 Wien\\ Austria}
\email{\mailto{Gerald.Teschl@univie.ac.at}}
\urladdr{\url{http://www.mat.univie.ac.at/~gerald/}}

\thanks{J. Differential Equations {\bf 257}, 415--449 (2014)}
\thanks{{\it Research supported by the Austrian Science Fund (FWF) under Grant No.\ Y330 and M1309.}}

\keywords{Schr\"odinger operator, delta-prime-interaction, spectral properties}
\subjclass[2010]{Primary 34L40, 81Q10; Secondary 34L05, 34L20}

\begin{abstract}
We introduce a novel approach for defining a $\delta'$-interaction on a subset of the real line of Lebesgue measure zero which is based
on Sturm--Liouville differential expression with measure coefficients. This enables us to establish basic spectral properties
(e.g.,  self-adjointness, lower semiboundedness and spectral asymptotics) of Hamiltonians with $\delta'$-interactions concentrated
on sets of complicated structures.
\end{abstract}

\maketitle

\section{Introduction}\label{sec:intro}

The main object of the present paper is the Hamiltonian $\rH$ in $L^2(a,b)$  associated with the differential expression
\be\label{eq:DE}
 -\frac{d}{dx} \frac{d}{dP(x)}+q(x),
\ee
where $P$ is a real-valued function on some interval $(a,b)\subseteq\R$ which is locally of bounded variation and $q \in L^1_{\loc}(a,b)$ is a real-valued function.
More specifically, we are only interested in the case when
\be\label{eq:P}
 P(x) = x + \nu(x), 
\ee
such that the Borel measure $d\nu$ is singular with respect to the Lebesgue measure.

Such kind of Sturm--Liouville operators with measure coefficients have a long history and for further details we refer to the monographs \cite{aghh}, \cite{fle}, \cite{ming}  and to the more recent papers \cite{ben}, \cite{bare}, \cite{measureSL}, \cite{EGNT}, \cite{EGNT2}, \cite{EGNT3}, \cite{SavShk99}, \cite{SavShk03}, \cite{volkmer2005}.
Let us only mention two particular examples: the Krein string operator \cite{kakr74}
\be
 -\frac{d}{dM(x)} \frac{d}{dx},
\ee
where $M$ is a nondecreasing function, and the Schr\"odinger operator with a measure potential (see, e.g., \cite{bare}, \cite{SavShk03})
\be\label{eq:Tdip}
 -\frac{d^2}{dx^2}+ dQ(x).
\ee
In particular, if the potential $dQ$ is a discrete measure, that is, $dQ(x)=\sum_k \gA_k\delta(x-x_k)$, then the differential expression in~\eqref{eq:Tdip} describes a $\delta$-interaction on the discrete set $X=\{x_k\}$ of strength $\gA$.

Similarly, if $q\equiv 0 $ and we set $d\nu(x)=\gB\delta(x)$ in \eqref{eq:P}, then the maximal operator associated with \eqref{eq:DE} in $L^2(\R)$ is given by (see Example \ref{ex:delta'} for details)
\begin{align}\label{eq:HBsingle}
\rH f & =-f'', & \dom{\rH} & = \left\lbrace f\in W^{2,2}(\R\backslash\{0\}): \begin{array}{c}
f'(0+)=f'(0-)\\
f(0+)-f(0-)=\gB f'(0+)
\end{array}\right\rbrace.
\end{align}
Hence, this operator describes a $\delta'$-interaction at $x=0$ of strength $\gB$ (see \cite{aghh}) and is formally given by
\be\label{eq:Hbeta}
\rH = -\frac{d^2}{dx^2}+\gB(\,\cdot\,,\delta')\delta'.
\ee
The existence of the
model \eqref{eq:Hbeta} was pointed out in 1980 by Grossmann, Hoegh--Krohn and Mebkhout
\cite{ghm}. However, the first rigorous treatment of \eqref{eq:Hbeta} was made by Gesztesy and
Holden in \cite{gh} using the method of boundary conditions. An alternate approach based on
generalized derivatives in the sense of distributions (using test functions which are allowed to
be discontinuous at the position of the interaction) was given in \cite{kur}.
However, again this approach only works for discrete supports.

One of the most traditional approaches to study Hamiltonians with $\delta'$-interactions is the method of boundary conditions (see, e.g.,  \cite{aghh}, \cite{KM_09}, \cite{KosMal10}). Note that only recently \cite{KosMal12} it was realized how to apply the form approach to investigate spectral properties of these Hamiltonians.
More precisely (see \cite{KosMal12} as well as \cite{BehLanLot12}),
a $\delta'$-interaction can be considered as a form sum of two forms $\gt_\rN$ and $\gb$, where
\begin{align}\label{eq:3.5}
\gt_\rN[f] & =\int_{\R} |f'(x)|^2\, dx, & \dom{\gt_\rN} & =W^{1,2}(\R\backslash\{0\}),
\end{align}
and
\begin{align}\label{eq:3.6}
\gb[f] & = \frac{|f(0+)-f(0-)|^2}{\gB}, & \dom{\gb} & =W^{1,2}(\R\backslash\{0\}).
\end{align}
Let us note that the operator
\begin{align}
\rH_\rN f & =- f'', & \dom{\rH_\rN} & =\{f\in W^{2,2}(\R\backslash \{0\}): f'(0+)=f'(0-)=0\},
\end{align}
is associated with the form $\gt_\rN$. Clearly, $\rH_\rN$ is the direct sum of Neumann realizations of $-\frac{d^2}{dx^2}$ in $L^2(\R_-)$ and $L^2(\R_+)$, respectively. Note that the form $\gb$ is infinitesimally form bounded with respect to the form $\gt_\rN$ and hence, by the KLMN theorem,  the form
\begin{align}\label{eq:3.7}
\gt[f] & =\gt_\rN[f]+\gb[f], & \dom{\gt} & =W^{1,2}(\R\backslash\{0\}),
\end{align}
is closed, lower semibounded and gives rise to the self-adjoint operator \eqref{eq:HBsingle}.

Let us also mention that the approximation by Schr\"odinger operators with scaled smooth potentials does not work for $\delta'$-interactions (see the details in  \cite[Appendix K]{aghh} and also \cite{cs}, \cite{enz}, \cite{GolHry}, \cite{GolMan}). All this shows that Hamiltonians with $\delta$ and $\delta'$-interactions are quite different. In particular, it is straightforward to introduce a $\delta$-interaction on an arbitrary set of Lebesgue measure zero.  To this end, one just needs to take an appropriate singular measure $dQ$ in \eqref{eq:Tdip}.
However, the situation with $\delta'$-interactions is quite unclear. To the best of our knowledge, only a few papers are devoted to the study of Hamiltonians with $\delta'$-interactions supported on sets of Lebesgue measure zero (see \cite{albniz}, \cite{braniz}, \cite{niz}). In \cite{albniz} and \cite{niz}, a $\delta'$-interaction on a compact set of Lebesgue measure zero is introduced with the help of boundary conditions. In the more recent paper \cite{braniz}, an abstract definition is given. Our main aim is to provide another definition of a $\delta'$-interaction using the generalized differential expression \eqref{eq:DE}. More precisely, based on the recent investigation of Sturm--Liouville operators with measure coefficients \cite{measureSL}, we show that the Hamiltonian with a $\delta'$-interaction supported on a closed set $\Sigma\subset \R$ of Lebesgue measure zero can be treated as a Sturm--Liouville operator \eqref{eq:DE} with a singular density \eqref{eq:P}. In this case, the singular measure $d\nu$ supported on $\Sigma$ is the strength of the $\delta'$-interaction. A precise definition will be given in Section~\ref{sec:def}. Moreover, we show that in the cases when either $d\nu$ is a discrete measure or $d\nu$ is a finite signed measure such that $\Sigma=\supp{d\nu}$ is a compact subset of $\R$ of Lebesgue measure zero, our definition coincides with the one in \cite{aghh}, \cite{gh} and in \cite{albniz}, \cite{niz}, respectively.

Our proposed approach has several advantages: First of all, it enables us to apply the well-developed Sturm--Liouville theory for the study of deficiency indices to Hamiltonians with $\delta'$-interactions (see Section~\ref{sec:sa}).
 In particular, we may describe the deficiency indices using Weyl's limit-point/limit-circle classification of the endpoints.
 The latter enables us to prove a simple self-adjointness criterion in terms of the interaction's strength $d\nu$ if $q\in L^\infty(a,b)$ (Theorem \ref{th:sa}).
 In Section \ref{sec:lsb}, we investigate lower semiboundedness of Hamiltonians with $\delta'$-interactions.
 First of all, Theorem \ref{th:gpw} extends the classical Glazman--Povzner--Wienholtz theorem to the case of Hamiltonians \eqref{eq:DE}--\eqref{eq:P} (the case of a discrete measure $d\nu$ was studied in \cite{KosMal12}).
 Moreover, we  introduce the quadratic form \eqref{eq:t01}--\eqref{eq:t02} which is associated with the Hamiltonian $\rH$ (see Lemma \ref{lem:3.2}).  
 This form plays a key role in the study of the negative spectrum of $\rH$.
 For instance, if $q\equiv 0$, then we show that the dimension of the negative subspace of $\rH$ equals the cardinality of $\Sig_-$, the topological support of the negative part $d\nu_-$ of $d\nu$ (Theorem \ref{th:kappa-}).
As a consequence of this result, let us mention the following fact: if the Hamiltonian $\rH$ is lower semibounded, then the negative part of $d\nu$ is a discrete measure (see Corollary \ref{cor:5.4}). Moreover, as in \cite{KosMal12} 
the form approach enables us to treat $\rH$ as a form perturbation of the Neumann realization $\rH_{\Sig}$ (see Section \ref{sec:VIII} for definitions) and then to investigate the discreteness of the spectrum of $\rH$. More precisely, using the discreteness criterion for the Neumann realization $\rH_\Sig$ (Theorem \ref{th:discr_N}), we obtain various necessary and sufficient conditions for the spectrum of $\rH$ to be discrete (see Sections \ref{ss:8.3} and \ref{ss:8.4}). In Section \ref{seq:res}, we approximate the Hamiltonian $\rH$ by Sturm--Liouville operators with smooth coefficients.
 Section \ref{sec:asymp} collects some results on spectral asymptotics of Hamiltonians with $\delta'$-interactions.
 Again, our definition immediately reveals the dependence of spectral asymptotics on the behavior of the measure $d\nu$ at the endpoints of the interval.
 Let us emphasize that in the regular case (that is, the interval $(a,b)$ is finite and $d\nu$ is a finite signed measure), the eigenvalues of $\rH$ admit the classical Weyl's asymptotic (Lemma \ref{lem:4.1}).
 However, the high energy behavior of the $m$-function as well as of the corresponding spectral function depends on the interaction strength $d\nu$ (see Theorem  \ref{th:4.2}).

\section{A Schr\"odinger operator with non-local interactions}\label{sec:def}

 In this section, we will introduce a particular kind of Sturm--Liouville operators with measure coefficients.
 For further details regarding the notion of measure Sturm--Liouville equations and operators we refer the reader to \cite{measureSL}, \cite{kac}, \cite{kakr74}, \cite{ming}.

 To set the stage, consider the differential expression
\begin{align}\label{eq:tau}
 \tau = - \frac{d}{dx} \frac{d}{dP(x)} + q(x)
\end{align}
on some interval $(a,b)$, where $q=\overline{q}\in L^1_{\loc}(a,b)$ and $P$ is a real-valued function on $(a,b)$ which is locally of bounded variation.
The corresponding real-valued Borel measure will be denoted by $dP$.
We shall also assume that the following holds:

\begin{hypothesis}\label{hyp:01}
The absolutely continuous part of $dP$ (with respect to the Lebesgue measure) is the Lebesgue measure and therefore, we may write $P(x) = x + \nu(x)$ such that the measure $d\nu$ is singular (with respect to the Lebesgue measure).
\end{hypothesis}

Note that we could also allow $q$ to be a measure as in \cite{measureSL}. However, for the sake of simplicity and readability we refrain from doing so here. 

In order for $\tau f$ to make sense, it is at least necessary that the function $f$ belongs to the class $AC_\loc((a,b);dP)$ of functions which are locally absolutely continuous with respect to $dP$.
This class consists of all functions $f$ which are locally of bounded variation and such that the corresponding Borel measure $df$ is absolutely continuous with respect to $dP$.
In this case one has
\be\label{eq:1_qd}
  df(x) = f^\qd(x)dP(x)
\ee
for some (unique) $f^\qd\in\Llocp$,
which is the Radon--Nikod\'ym derivative of $df$ with respect to $dP$ and called the {\em quasi-derivative of} $f$.
The function $f^\qd$ is defined almost everywhere (with respect to the Lebesgue measure) and (in order for $\tau f$ to make sense) has to be locally absolutely continuous with respect to the Lebesgue measure, such that
\begin{align}
 \tau f(x) = - (f^\qd)'(x) + q(x) f(x)
\end{align}
is defined for almost all $x\in(a,b)$ with respect to the Lebesgue measure.

 Thus, the maximal domain on which the differential expression \eqref{eq:tau} can be defined, is given by (see \cite{measureSL})
\be\label{eq:dmax}
\gD=\{f\in AC_{\loc}((a,b);dP): f^{[1]}\in AC_{\loc}(a,b)\}.
\ee
Consequently, the differential expression $\tau$ gives rise to a maximally defined closed operator $\rH$ in the Hilbert space $\Lab$, given by
 \begin{align}\label{eq:lmax}
 \rH f & =\tau f, & \dom {\rH} & =\gD_{\max}=\{ f\in L^2(a,b): f\in\gD,\ \tau f\in L^2(a,b)\}.
 \end{align}
 Although there may be various representatives of a function $f\in\dom{\rH}$ in $\gD$, the function $\tau f$ (and in fact, also the first quasi-derivative) are independent of this choice.
 For definiteness, by default we will always choose the unique left-continuous representative of $f$ which may be discontinuous only in points of mass of $dP$.
 In particular, $f$ has at most countably many points of discontinuity and the respective right-hand limits will be denoted with
\begin{align}
f(x+)=\lim_{\varepsilon \downarrow 0}f(x+\varepsilon).
\end{align}
Since $dP(x)=dx+d\nu(x)$, equation \eqref{eq:1_qd} turns into
\be\label{eq:1_qd'}
  f(x) - f(y) = \int_y^x f^\qd(t)dt + \int_{[y,x)} f^\qd(t)\,d\nu(t), \quad x,\, y\in(a,b),~y<x.
 \ee
Note that the first summand  on the right-hand side is locally absolutely continuous, while the second one is singular with respect to the Lebesgue measure. 
Let $\Sigma_{\min}$ be a minimal (non-topological) support of the measure $d\nu$ and note that the Lebesgue measure of $\Sigma_{\min}$  equals zero.
Moreover, we can choose $\Sigma_{\min}$ such that for all $x\in \Sigma_{\min}^c:=(a,b)\backslash\Sigma_{\min}$  the second summand in \eqref{eq:1_qd'} is differentiable at $x$ and its derivative is zero.  
Due to the continuity of $f^{[1]}$, the derivative $f'$ exists for all $x\in \Sigma_{\min}^c$   with $f'(x) = f^{[1]}(x)$.
  Since $f^{[1]}$ is continuous on $(a,b)$, $f'$ is continuous on $\Sigma_{\min}^c$ as well and admits a continuous (and even locally absolutely continuous) continuation to $(a,b)$ (which coincides with $f^\qd$). We will often keep the notation $f'$ for this continuation.

\begin{example}[$\delta'$-interaction on a discrete set]\label{ex:delta'}

 Let $(a,b)=\R$, $q\equiv 0$ and $\supp{d\nu}=X$, where $X=\{x_k\}_{k=-\infty}^{\infty}$ with $x_k<x_{k+1}$ for every $k\in \Z$, and $x_k\to \pm\infty$ as $k\to \pm\infty$.
 Then the singular part $d\nu$ of $dP$ is of the form
 \be
 d\nu(x)=\sum_{k\in\Z} \beta_k\delta(x-x_k),
 \ee
 for some $\beta_k\in\R$. 
 For every $f\in\gD_{\max}$, the functions $f$ and $f'$ are clearly absolutely continuous on the intervals $[x_k,x_{k+1}]$ for all $k\in \Z$.
 Moreover, $f'(x)=f^{\qd}(x)$ for all $x\in \R\backslash X$ and, since $f^\qd\in AC_{\loc}(\R)$, we get
 \be\label{eq:2.8}
 f'(x_k+)=f'(x_k-)=f^\qd(x_k)=:f'(x_k),\quad k\in\Z.
 \ee
 Next, using \eqref{eq:1_qd}, we obtain the jump condition
 \be\label{eq:2.9}
 f(x_k+)-f(x_k-)=f^\qd(x_k)\beta_k=f'(x_k)\beta_k,\quad k\in\Z,
 \ee
and hence the domain of the maximal operator is given by
 \begin{align}\begin{split}
 \dom{\rH} & =\{f\in L^2(\R): f,f'\in AC([x_k,x_{k+1}]) \text{ for all } k\in\Z, \\ & \qquad\qquad\qquad\qquad f\ \text{satisfies}\ \eqref{eq:2.8} \text{ and } \eqref{eq:2.9},\ f''\in L^2(\R)\}.
 \end{split}\end{align}
  Therefore, the maximal operator $\rH$ describes $\delta'$-interactions at the points $x_k$ with strengths $\beta_k$ (see, e.g. \cite{aghh}), that is, in this case the Hamiltonian $\rH$ associated with $\tau$ can be identified with the formal differential expression
 \begin{align}
  \rH & = -\frac{d^2}{dx^2}+\sum_{k\in\Z}\beta_k \spr{\,\cdot\,}{\delta'_k}\delta'_k, & \delta'_k & =\delta'(x-x_k).
 \end{align}
 \end{example}

 \begin{example}[$\delta'$-interaction on a Cantor-type set]\label{ex:niz}

Let $\Sigma$ be a closed compact subset of $\R$ of Lebesgue measure zero and pick a Borel measure $\mu$ whose topological support is $\Sigma$ (e.g.\ the measure associated with $\Sigma$ considered as a time scale \cite{TimeScales}).
 Following \cite{albniz} (see also \cite[\S 6]{braniz}), the Hamiltonian $\rL$ with a $\delta'$-interaction of strength $\gB\in L^1(\R,d\mu)$ on $\Sigma$ is defined by
 $\rL f =-f''$
 on functions $f\in \dom{ \rL}$ which belong to $W^{2,2}(\R\backslash\Sigma)$ and, moreover, admit the following integral representation
 \begin{align}\label{eq:1_qdNew}
 f(x) - f(y) & = \int_y^x f'(t)dt +\int_{[y,x)} g(t)d\mu(t), & f'(x) - f'(y) & = \int_y^x f''(t)dt,
 \end{align}
 where $g\in L^1(\R,d\mu)$ such that
 \begin{align}
 g(x)=\gB(x)f'(x),\quad x\in \Sigma.
 \end{align}
 Clearly, setting $d\nu(x) = \gB(x)d\mu(x)$ as well as $q\equiv 0$, we see that such an $f$ is locally absolutely continuous with respect to $dP$.
 Moreover, the second equality in \eqref{eq:1_qdNew} means that $f^\qd = f'$ is locally absolutely continuous on $\R$ and hence implies $f\in\dom{\rH}$.
Therefore, the operator $\rL$ coincides with the maximal operator $\rH$.
 \end{example}

\section{Self-adjointness}\label{sec:sa}

It is known that the adjoint $\rH_{\min}:=\rH^*$ of the maximal operator $\rH=\rH_{\max}$ defined in Section~\ref{sec:def} is symmetric in $L^2(a,b)$; see \cite[Theorem~4.4]{measureSL}. Moreover, $\rH_{\min}$ can be defined as the closure of the operator $\rH^0$ defined by $\tau$ on the domain
\be\label{eq:dom0}
\dom{\rH^0}:=\dom{\rH}\cap L^2_c(a,b), 
\ee
where $L^2_c(a,b)$ denotes the space of square integrable functions with compact support. 
In order to describe the  deficiency indices of $\rH_{\min}$ we need the following useful definition.

\begin{definition}\label{def:lplc}
We say that {\em $\tau$ is in the limit-circle (l.c.) case at $a$} (at $b$), if for each $z\in \C$ all solutions of $(\tau - z)u=0$ lie in $L^2(a,b)$ near a (near $b$). Furthermore, we say {\em $\tau$ is in the limit-point (l.p.) case at $a$} (at $b$) if for each $z\in \C$ there is some solution of $(\tau - z)u=0$ which does not lie in $L^2(a,b)$ near $a$ (near $b$).
\end{definition}

 At this point, let us mention that for every $z\in\C$ the differential equation $(\tau - z)u = 0$ admits precisely two linearly independent solutions; see \cite[Section~3]{measureSL}.

 The next result is the extension of the classical Weyl classification of deficiency indices of the operator $\rH_{\min}$ (cf. \cite[Theorem 5.2]{measureSL}).

 \begin{theorem}\label{th:weyl}
 Each boundary point is either in the l.p.\ case or in the l.c.\ case. Moreover, the deficiency indices of the operator $\rH_{\min}$ are $(n,n)$, where $n\in\{0,1,2\}$ is the number of boundary points which are in the l.c.\ case. In particular, the maximal operator $\rH$ is self-adjoint if and only if both boundary points are in the l.p.\ case.
 \end{theorem}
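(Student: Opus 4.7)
The plan is to follow the classical Weyl approach, adapted to the measure-theoretic setting. The backbone of everything is a \emph{Weyl alternative} for the equation $(\tau-z)u=0$, which I would prove first: if there exists one $z_0\in\C$ such that some nontrivial solution of $(\tau-z_0)u=0$ lies in $L^2$ near the endpoint $b$, then for \emph{every} $z\in\C$ all solutions of $(\tau-z)u=0$ lie in $L^2$ near $b$ (and analogously at $a$). This would clear up the apparent dependence on $z$ in Definition~\ref{def:lplc} and simultaneously show that the l.c./l.p. dichotomy is exhaustive.

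To prove the Weyl alternative I would fix two linearly independent solutions $u_1,u_2$ of $(\tau-z_0)u=0$ normalized so that the Wronskian $W(u_1,u_2)=u_1 u_2^{\qd}-u_1^{\qd}u_2\equiv 1$ (constancy follows from a Lagrange-type identity, which in this setting requires only the product rule for functions of locally bounded variation together with the definition of $f^{\qd}$ via \eqref{eq:1_qd}). Any solution $u$ of $(\tau-z)u=0$ then satisfies the Volterra-type integral equation
\begin{align*}
u(x)=c_1 u_1(x)+c_2 u_2(x)+(z-z_0)\int_c^x\bigl(u_1(x)u_2(t)-u_2(x)u_1(t)\bigr)u(t)\,dt,
\end{align*}
and a standard Gronwall-type iteration on a compact subinterval of $(a,b)$ (the kernel is locally bounded since $u_1,u_2$ are continuous with respect to $dP$ off an at most countable set) yields a pointwise estimate $|u(x)|\le C(|u_1(x)|+|u_2(x)|)$. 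This estimate, integrated near $b$, transfers the $L^2$ property from one energy to another and from one solution to every solution.

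With the alternative in hand, the l.c./l.p. classification at each endpoint becomes intrinsic and $\{\text{l.p.},\text{l.c.}\}$ are the only options. Next, I would compute $\dim\ker(\rH^*\mp \I)$. Since $\rH^*=\rH_{\min}$ is the closure of $\rH^0$ from \eqref{eq:dom0}, integration by parts (a Lagrange identity of the form $\int_a^b \bigl((\tau f)\bar g-f\overline{\tau g}\bigr)dx=[f,g]_a^b$ with $[f,g]_x=f(x)\overline{g^{\qd}(x)}-f^{\qd}(x)\overline{g(x)}$) shows that elements of $\ker(\rH\mp\I)$ are precisely the $L^2(a,b)$-solutions of $(\tau\mp \I)u=0$. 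At an l.p. endpoint exactly one independent solution lies in $L^2$ near that endpoint, at an l.c. endpoint both do; a solution lies in $L^2(a,b)$ iff it lies in $L^2$ near both endpoints. A short linear-algebra count then shows that $\dim\ker(\rH-\I)=n$, where $n$ is the number of l.c. endpoints, and similarly $\dim\ker(\rH+\I)=n$. Self-adjointness of $\rH_{\max}$ is equivalent to these defect numbers being zero, which gives the last statement.

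The main obstacle is the Weyl alternative itself: the Volterra iteration must be controlled despite the fact that solutions $u$ are only $AC_{\loc}((a,b);dP)$ and therefore genuinely discontinuous at mass points of $d\nu$. One has to be careful that the integral equation is written against the Lebesgue measure (since only the quasi-derivative satisfies an ODE in the classical sense, via $(u^{\qd})'=(q-z)u$), and that jumps of $u$ across points of $\Sig_{\min}$ are compatible with the pointwise Gronwall bound; these are automatic once one works with the left-continuous representatives fixed in Section~\ref{sec:def}. Everything else is essentially the classical Weyl argument. (The result is in fact \cite[Theorem~5.2]{measureSL}, so in the actual paper I would just quote it.)
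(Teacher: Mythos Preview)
The paper does not give its own proof of this theorem; it simply records it as \cite[Theorem~5.2]{measureSL}, exactly as you anticipate in your closing parenthetical. The outline you give---Weyl alternative via a variation-of-parameters Volterra equation and a Gronwall bound, followed by the dimension count for $\ker(\rH\mp\I)$ using the Lagrange identity---is the standard route and is essentially how the cited reference proceeds, so there is no divergence in approach to discuss.

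One genuine slip, however: your statement of the Weyl alternative is false as written. You claim that if for some $z_0$ \emph{some} nontrivial solution of $(\tau-z_0)u=0$ lies in $L^2$ near $b$, then for every $z$ \emph{all} solutions do. But in the l.p.\ case there is, for each non-real $z$, exactly one (up to scalars) $L^2$-solution near $b$, so your hypothesis is always satisfied while the conclusion fails. The correct version---and the one your own Gronwall estimate $|u(x)|\le C\bigl(|u_1(x)|+|u_2(x)|\bigr)$ actually establishes, since it requires \emph{both} basis solutions $u_1,u_2$ to lie in $L^2$ near $b$---reads: if for one $z_0$ \emph{all} solutions of $(\tau-z_0)u=0$ lie in $L^2$ near $b$, then the same holds for every $z\in\C$. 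With that corrected, your sketch is sound.
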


Theorem \ref{th:weyl} provides a powerful tool to investigate the self-adjointness of the operator $\rH$.
In particular, in the case when $q\in L^\infty(a,b)+L^1_{\mathrm{c}}(a,b)$ we obtain the following simple self-adjointness criterion.
Here $L^\infty(a,b)$ denotes the set of bounded measurable functions and $L^1_{\mathrm{c}}(a,b)$ the integrable functions with compact support.

\begin{theorem}\label{th:sa}
 If $q\in L^\infty(a,b)+L^1_{\mathrm{c}}(a,b)$, then $\rH=\rH^*$ if and only if the following two conditions are satisfied:
  \begin{itemize}
   \item[(i)] For some $c\in (a,b)$ we have $\mathbf{1}\notin L^2(a,c)$ or $P\notin L^2(a,c)$.
   \item[(ii)] For some $c\in(a,b)$ we have  $\mathbf{1}\notin L^2(c,b)$ or $P\notin L^2(c,b)$.
 \end{itemize}
 \end{theorem}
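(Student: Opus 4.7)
The plan is to invoke Theorem~\ref{th:weyl}, which says $\rH = \rH^*$ iff both endpoints are in the limit-point case, and to identify these limit-point/limit-circle classifications with the stated $L^2$ conditions on $\mathbf{1}$ and $P$. By symmetry it suffices to treat the endpoint $a$ and prove that condition (i) is equivalent to $\tau$ being in the l.p.\ case at $a$.

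First I would reduce to the case $q \equiv 0$ near $a$. Since l.p./l.c.\ at $a$ is a local property at $a$, I write $q = q_\infty + q_c$ with $q_\infty \in L^\infty(a,b)$ and $q_c \in L^1_{\mathrm{c}}(a,b)$, and choose $c$ so close to $a$ that $q_c$ vanishes on $(a,c)$; on this subinterval the potential is merely the bounded function $q_\infty$. The classical fact that bounded perturbations of $q$ preserve the l.p./l.c.\ classification carries over to the measure-coefficient setting: taking a fundamental system $U_1, U_2$ of $\tilde\tau u = 0$ for the unperturbed expression $\tilde\tau := -\frac{d}{dx}\frac{d}{dP}$ with unit Wronskian, any solution $v$ of $\tau v = 0$ on $(a,c)$ satisfies a Volterra-type integral equation with the Lebesgue-bounded forcing $q_\infty\,v$, and a Gronwall estimate then shows that $v \in L^2(a,c)$ iff $U_1, U_2 \in L^2(a,c)$. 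This reduces the problem to analyzing $\tilde\tau u = 0$, which reads $(u^\qd)' \equiv 0$, forcing $u^\qd$ to be constant: the choice $u^\qd \equiv 0$ gives $du = u^\qd\,dP = 0$, hence $u_1 = \mathbf{1}$; the choice $u^\qd \equiv 1$ gives $du = dP$, hence $u_2 = P$ (up to an additive constant). These two solutions are linearly independent and span the solution space.

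Putting everything together, $\tau$ is in the l.c.\ case at $a$ iff both $\mathbf{1}$ and $P$ lie in $L^2(a,c)$ near $a$; equivalently, it is in the l.p.\ case at $a$ iff $\mathbf{1} \notin L^2(a,c)$ or $P \notin L^2(a,c)$, which is exactly (i). The analogous argument at $b$ yields (ii), and Theorem~\ref{th:weyl} concludes the equivalence. The main obstacle is justifying the l.p./l.c.\ invariance under bounded perturbations of $q$ in the measure-coefficient setting, since one must check that the Gronwall--Volterra argument still goes through despite the discontinuities of $P$ inside $U_2 = P$; however, the integral kernel built from $U_1, U_2$ remains pointwise bounded by products of the $U_i$, so the Lebesgue-bounded forcing $q_\infty\,v$ is controlled in exactly the classical way, and the remaining steps are routine.
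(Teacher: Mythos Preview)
Your proposal is correct and follows essentially the same route as the paper: reduce to $q\equiv 0$, observe that the two independent solutions of $\tau u=0$ are $u_1=\mathbf{1}$ and $u_2=P$, and invoke Theorem~\ref{th:weyl}. The only difference is that the paper handles the reduction more directly---a bounded $q_\infty$ is a bounded self-adjoint operator perturbation (so self-adjointness is trivially preserved) and the compactly supported $q_c$ does not affect solutions near the endpoints---whereas you work harder at the level of the l.p./l.c.\ classification via a Volterra--Gronwall argument; both are valid, but the operator-level observation avoids the need to track measure-coefficient subtleties.
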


\begin{proof}
 Since the self-adjointness is stable under bounded perturbations and perturbations with compact support do not change the behavior of solutions near the endpoints,
it suffices to prove the claim for the case $q\equiv 0$.
 However, in this case, the equation $\tau u=0$ has the two linearly independent solutions
\begin{align}
 u_1(x)=1 \quad \text{and} \quad u_2(x)=P(x)=x+\nu(x).
 \end{align}
 To complete the proof, it suffices to apply Theorem \ref{th:weyl}.
\end{proof}

 Clearly, we can also reformulate Theorem \ref{th:sa} as follows.

\begin{corollary}\label{cor:2.3}
 If $q\in L^\infty(a,b)+L^1_{\mathrm{c}}(a,b)$, then  $\rH\neq \rH^*$ if and only if at least one of the following two conditions is satisfied:
 \begin{itemize}
  \item[(i)] For some $c\in(a,b)$ we have $P\in L^2(a,c)$ and $a>-\infty$.
  \item[(ii)] For some $c\in(a,b)$ we have $P\in L^2(c,b)$ and $b < +\infty$.
 \end{itemize}
 \end{corollary}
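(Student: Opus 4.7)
The plan is to derive the corollary by simple contraposition of Theorem~\ref{th:sa}, once we unpack the $L^2$-conditions there into the form stated in the corollary. So first I would note that $\rH\neq\rH^*$ is equivalent to the negation of the conjunction of (i) and (ii) in Theorem~\ref{th:sa}, hence equivalent to the disjunction of their negations. It therefore suffices to show that the negation of condition~(i) of Theorem~\ref{th:sa} is precisely condition~(i) of the corollary (and similarly for (ii)).

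Next I would analyze each disjunct. Since any $c\in(a,b)$ is a finite real number, $\mathbf{1}\in L^2(a,c)$ holds if and only if $c-a<\infty$, that is, if and only if $a>-\infty$; in particular, this condition is independent of the choice of $c$. Consequently, the existential quantifier in Theorem~\ref{th:sa}(i) distributes over the disjunction, and the condition reads:
\[
a=-\infty \quad\text{or}\quad \exists c\in(a,b):\ P\notin L^2(a,c).
\]
Its negation is therefore $a>-\infty$ together with $P\in L^2(a,c)$ for every $c\in(a,b)$. It remains to observe that when $a>-\infty$, $P\in L^2(a,c)$ for some $c\in(a,b)$ is in fact equivalent to $P\in L^2(a,c')$ for every $c'\in(a,b)$: indeed, $P$ is locally of bounded variation on $(a,b)$, hence locally bounded, so that $\int_c^{c'}|P|^2\,dx<\infty$ for any two points $c,c'\in(a,b)$, and the only possible source of divergence is at the endpoint~$a$. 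This shows that the negation of Theorem~\ref{th:sa}(i) is exactly condition~(i) of the corollary.

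The analogous argument, with the roles of $a$ and $b$ swapped and $\mathbf{1}\in L^2(c,b)\Leftrightarrow b<+\infty$, yields the equivalence of the negation of Theorem~\ref{th:sa}(ii) with condition~(ii) of the corollary. Combining these two equivalences with the initial contraposition completes the proof. There is no real obstacle: the only subtlety is the clean handling of the quantifier ``for some $c$'' in the two conjuncts of Theorem~\ref{th:sa}, which is resolved by the two observations above (independence from $c$ of the $\mathbf{1}$-condition, and insensitivity to $c$ of the $P$-condition thanks to local boundedness of $P$).
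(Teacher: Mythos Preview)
Your proposal is correct and follows exactly the approach the paper intends: the paper gives no explicit proof of this corollary, merely prefacing it with ``Clearly, we can also reformulate Theorem~\ref{th:sa} as follows,'' and your argument spells out precisely that reformulation by contraposition. The two minor observations you isolate---that $\mathbf{1}\in L^2(a,c)$ is equivalent to $a>-\infty$ independently of $c$, and that local boundedness of $P$ makes the $P\in L^2(a,c)$ condition insensitive to the particular $c$---are exactly what is needed to turn the ``for some $c$'' in Theorem~\ref{th:sa} into the ``for some $c$'' in the corollary after negation.
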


 As an immediate consequence we obtain the following result.

 \begin{corollary}\label{cor:2.2}
 If $(a,b)=\R$ and $q\in L^\infty(\R)+L^1_{\mathrm{c}}(\R)$, then $\rH=\rH^*$.
 \end{corollary}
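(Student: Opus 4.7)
The plan is to deduce this immediately from the preceding self-adjointness criterion, which has already been stated and reduced to a routine checking problem. Since the proof is essentially a one-liner, there is no substantive obstacle; the task is just to verify that the hypotheses on the endpoints behind conditions (i) and (ii) of Theorem \ref{th:sa} (or equivalently, Corollary \ref{cor:2.3}) are automatically met when $(a,b) = \R$.

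Concretely, I would argue via Corollary \ref{cor:2.3}. For $\rH \neq \rH^*$ to hold, one of its two conditions must be satisfied, but condition (i) explicitly requires $a > -\infty$ and condition (ii) requires $b < +\infty$. With $(a,b) = \R$ both of these are violated, so Corollary \ref{cor:2.3} forces $\rH = \rH^*$.

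Alternatively, and essentially equivalently, one can apply Theorem \ref{th:sa} directly by checking its two conditions. For any $c \in \R$ the constant function $\mathbf{1}$ is not square integrable on $(-\infty,c)$ or on $(c,+\infty)$, so condition (i) is trivially fulfilled (with the disjunction resolved by $\mathbf{1} \notin L^2(a,c)$), and likewise for condition (ii). Hence $\rH = \rH^*$. The hypothesis $q \in L^\infty(\R) + L^1_{\mathrm{c}}(\R)$ is used only to invoke Theorem \ref{th:sa}; no further input is needed, since the endpoints being at $\pm \infty$ alone already ensure the limit-point case at both ends via the solutions $u_1 \equiv 1$ and $u_2 = P$ constructed in the proof of Theorem \ref{th:sa}.
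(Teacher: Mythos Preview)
Your proposal is correct and matches the paper's intended argument: the corollary is stated as an immediate consequence of Theorem~\ref{th:sa} (equivalently Corollary~\ref{cor:2.3}), and your observation that $\mathbf{1}\notin L^2(-\infty,c)$ and $\mathbf{1}\notin L^2(c,+\infty)$ (or that the finiteness conditions $a>-\infty$, $b<+\infty$ in Corollary~\ref{cor:2.3} both fail) is exactly the point.
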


 \begin{remark}
 Note that Corollary \ref{cor:2.2} implies the self-adjointness of the Hamiltonians discussed in Examples \ref{ex:delta'} and \ref{ex:niz}. In particular, the self-adjointness of the one in Example \ref{ex:delta'} was established in \cite[Theorem 4.1]{bsw}. In the case when $\Sigma$ is a closed compact subset of $\R$ of Lebesgue measure zero and $\gB\in L^1(\R,d\mu)$, where $\mu$ is a Radon measure on $\Sigma$ (or equivalently $d\nu(x)=\gB(x)d\mu(x)$ is a finite measure on $\Sigma$), the self-adjointness of the Hamiltonian in Example \ref{ex:niz} was proved in \cite{albniz} and \cite{braniz}. Let us mention that Corollary \ref{cor:2.2} implies the self-adjointness of this Hamiltonian under Hypothesis \ref{hyp:01}
 \end{remark}

\section{Lower semiboundedness}\label{sec:lsb}

\subsection{A Glazman--Povzner--Wienholtz type theorem}\label{ss:GPW}

Let $(a,b)=\R_+$ and assume that $\tau$ is regular at $x=0$, i.e., $q\in L^1(0,c)$ and $|\nu|((0,c))<\infty$ for all $c>0$. Consider the restricted operator subject to the Dirichlet condition at $x=0$:
\begin{align}\label{eq:L_D}
\rH_{\rD} & = \rH\upharpoonright \dom{\rH_\rD}, & \dom{\rH_\rD} & =\{f\in\dom{\rH}: f(0)=0\},
\end{align}
where $\rH$ and $\dom{\rH}$ are given by \eqref{eq:lmax}.

\begin{theorem}\label{th:gpw}
Let $(a,b)=\R_+$ and assume that the topological support $\supp{d\nu}=\Sigma\subset \R_+$ satisfies at least one of the following conditions:
\begin{itemize}
\item[(i)] $\Sig$ is bounded.
\item[(ii)] $\Sig$ has Lebesgue measure zero, $|\Sig|=0$.
\item[(iii)] $\Sig$ contains an infinite number
of gaps near $+\infty$ whose lengths do not tend to zero.
\end{itemize}
If the minimal operator $\rH_{\min}=\rH_\rD^*$ acting in $L^2(\R_+)$ is lower semibounded, then it is self-adjoint.
\end{theorem}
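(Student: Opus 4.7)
The plan is to adapt the classical Glazman--Povzner--Wienholtz cut-off argument to the measure-coefficient setting. Suppose for contradiction that $\rH_{\min}$ is lower semibounded, $(\rH_{\min}f,f)\geq -C\|f\|^2$ for all $f\in\dom{\rH_{\min}}$, but $\rH_{\min}$ is not self-adjoint. Then by Theorem~\ref{th:weyl} the endpoint $+\infty$ is in the limit-circle case, so for every real $\lambda$ there exists a real-valued solution $u$ of $(\tau-\lambda)u=0$ with $u(0)=0$ and $u\in L^2(\R_+)$. Fix $\lambda<-C$.

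The heart of the proof is to construct smooth cut-offs $\chi_n\in C^\infty(\R_+)$ with $\chi_n\equiv 1$ on $[0,n]$, $\chi_n\equiv 0$ on $[n+L,\infty)$ for some fixed $L>0$, $\chi_n'\equiv 0$ on $\Sigma=\supp{d\nu}$, and $\|\chi_n'\|_\infty$ bounded uniformly in $n$. The condition $\chi_n'\equiv 0$ on $\Sigma$ guarantees that $v_n:=\chi_n u$ has locally absolutely continuous quasi-derivative $v_n^\qd=\chi_n u^\qd+u\chi_n'\mathbf{1}_{\Sigma_{\min}^c}$: the jumps and singular variation of $u$ on $\Sigma$ are neutralized by $\chi_n'=0$ there. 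Hence $v_n\in\dom{\rH_{\min}}$ (it is compactly supported, vanishes at $0$, and lies in $\dom{\rH}$). In case~(i), $\Sigma\subset[0,M]$ is bounded, so for $n>M$ any standard smooth cut-off transitioning on $[n,n+L]$ works. In case~(iii), one places the transition inside one of the gaps $(\alpha_k,\beta_k)\subset\Sigma^c$ of length $\geq L_0$ with $\alpha_k\to+\infty$. In case~(ii), where $|\Sigma|=0$, one covers $\Sigma\cap[n,n+L]$ by an open set $U_n$ of Lebesgue measure less than $L/2$, and constructs $\chi_n'$ smooth and non-positive, supported in $[n,n+L]\setminus\overline{U_n}$, with integral $-1$ and supremum at most $2/L$.

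Given such $\chi_n$, Stieltjes integration by parts combined with $\tau u=\lambda u$ yields the key identity
\begin{equation*}
(\rH_{\min} v_n,v_n) = \lambda\|v_n\|^2 + \int_0^\infty (\chi_n'(x))^2 u(x)^2\, dx.
\end{equation*}
To derive this, one checks that $\tau v_n=-2\chi_n'u^\qd+\lambda v_n-u\chi_n''$ holds Lebesgue-a.e.~(using $u^\qd\in AC_{\loc}(\R_+)$ and $u'=u^\qd$ outside $\Sigma_{\min}$), takes the inner product with $v_n$, and integrates by parts once more on the resulting $\int\chi_n\chi_n''u^2\,dx$ term. Combining the identity with $(\rH_{\min}v_n,v_n)\geq -C\|v_n\|^2$ yields $\int(\chi_n')^2u^2\,dx\geq|C+\lambda|\cdot\|v_n\|^2>0$. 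As $n\to\infty$, dominated convergence gives $\|v_n\|^2\to\|u\|^2>0$, while $\int(\chi_n')^2u^2\,dx\leq\|\chi_n'\|_\infty^2\int_n^{n+L}u^2\,dx\to 0$, a contradiction. The main obstacle lies in case~(ii), where $\Sigma$ may be a Cantor-type set with no fixed-length gaps: constructing smooth cut-offs with derivative vanishing on $\Sigma$ and $\|\chi_n'\|_\infty\leq 2/L$ is possible precisely because $|\Sigma|=0$ leaves enough room inside $[n,n+L]$ for the mass of $\chi_n'$.
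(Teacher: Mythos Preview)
Your proof is correct and follows essentially the same route as the paper's: both argue by contradiction via Weyl's alternative, pick an $L^2$ solution $u$ with $u(0)=0$, multiply by a smooth cut-off whose derivative vanishes on (a neighborhood of) $\Sigma$ so that the product stays in the domain, and then use the identity $(\rH v_n,v_n)=\lambda\|v_n\|^2+\int(\chi_n')^2u^2\,dx$ together with the lower bound to force $u\equiv 0$. The only cosmetic differences are that the paper normalizes to $\rH_\rD^*\ge I$ and takes $\lambda=0$ (whereas you keep $\lambda<-C$ explicit), and that in case~(i) the paper simply invokes the classical Glazman--Povzner--Wienholtz theorem via decoupling rather than rerunning the cut-off argument.
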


\begin{proof}
(i) If $\Sig$ is a bounded subset of $\R_+$, then the claim immediately follows from the classical Glazman--Povzner--Wienholtz theorem by employing the Glazman separation principle. 

(ii) Assume that $|\Sig|=0$. Without loss of generality we can assume that $\rH_\rD^*\ge I$. If our differential equation was in the limit-circle case at $+\infty$, then
 we could find a nontrivial square integrable solution $u$ of $\tau u =0$ with $u(0) = 0$.
 By our assumption on $\Sigma$, for every $n\in\N$ the set $\Sig_n:=\Sig\cap [n,n+1]$ is closed and $|\Sig_n|=0$. Therefore, fix $\varepsilon \in (0,1)$ and choose an open $\varepsilon$-neighborhood $B_\varepsilon(\Sig_n)$ of $\Sig_n$. Now choose functions $\varphi_n\in C^2(\R_+)$ such that  
\begin{align}\label{eq:phi1}
\begin{split}
&0  \le \varphi_n\le 1,\qquad  -\frac{2}{1-\eps}  \le \varphi_n'\le 0\\
 \varphi_n(x)  =&
\begin{cases}
1,& x \le n,\\
0,& x\in [n+1,+\infty),
\end{cases}
\qquad  \varphi_n'(x)=0\ \ \text{on}\ \ B_\varepsilon(\Sig_n),
\end{split}
\end{align}
and introduce
\begin{equation}
  u_n(x)=u(x)\varphi_n(x), \quad x\in\R_+.
\end{equation}
Clearly, the support of $u_n$ is contained in $[0,n+1]$. Let us show that $u_n\in\dom{\rH_\rD^*}$. Firstly, note that $u_n^\qd(x)=u^\qd(x)\varphi_n(x)+u(x)\varphi_n'(x)$ since $\varphi_n'=\varphi_n^\qd$ on $\R_+$. Therefore, $u_n^\qd\in AC_{\loc}(\R_+)$ since $\varphi_n'$ vanishes on $\Sig$. Hence in order to show that $\tau u_n\in L^2(\R_+)$, it suffices to note that $u_n$ equals $u$ near zero and vanishes near $+\infty$.

Furthermore, noting that $u$ is a solution of $\tau u = 0$, $\varphi_n'\equiv 0$ for all $x\notin (n,n+1)$, and $u^{[1]}(x)=u'(x)$ for all $x\in (n,n+1)$, we get
 \begin{align}\label{3.16}
 \begin{split}
(\rH u_n,u_n)&=\int_{\R_+} [-(u^{[1]}_n)' + qu_n]u_ndx = - \int_{\R_+} [u^\qd\varphi'_n+ u'\varphi'_n + u
\varphi''_n]u\,\varphi_n dx \\
&= - \int^{n+1}_{n} [2 u'\varphi'_n + u
\varphi''_n]u\varphi_n dx \\
&= -\frac{1}{2} \int^{n+1}_{n} \big[ \bigl(u^2\bigr)'\bigl(\varphi^2_n\bigr)'+ 2u^2
\varphi''_n \varphi_n \big]dx = 
\int^{n+1}_{n}u^2 \bigl(\varphi'_n\bigr)^2dx.
\end{split}
\end{align}
In summary we obtain
\begin{equation}\label{3.18}
\int^{n}_{0}u^2dx \le (u_n,u_n) \le (\rH u_n,u_n) \le  \frac{4}{(1-\eps)^2}\int^{n+1}_{n}u^2 dx.
\end{equation}
Noting that $u\in L^2(\R_+)$, inequality \eqref{3.18} implies that $u\equiv 0$. This contradiction completes the proof.

(iii) By the assumption on $\Sigma$ we can pick a sequence of points $x_j\to\infty$ and a positive number $\eps>0$ such that
$(x_j,x_j+\eps) \cap\Sigma=\emptyset$ for every $j\in\N$. Now choose functions $\varphi_j\in C^2(\R)$ such that
\begin{align}
0 & \le \varphi_j\le 1, & \varphi_j(x) & =
\begin{cases}
1,& x \le x_j,\\
0,& x\ge x_j+\eps,
\end{cases}
&
-\frac{2}{\eps} & \le \varphi_j'\le 0,
\end{align}
and introduce
\begin{equation}
  u_j(x)=u(x)\varphi_j(x), \quad x\in\R_+.
\end{equation}
Clearly, the support of $u_j$ is contained in $[0,x_j+\eps]$ and, moreover, $u_j\in\dom{\rH_\rD^*}$. The rest of the proof is analogous to the proof of (ii) and we leave it to the reader.
\end{proof}

\begin{remark}\label{rem:4.2}
\begin{itemize}
\item[(i)] Note that Theorem \ref{th:gpw} admits an obvious extension to whole line case.
\item[(ii)] In the case of $\delta'$-point interactions (that is, $\Sigma$ is a discrete set) Theorem \ref{th:gpw} was established in \cite{KosMal12}. Note also that condition (ii) can be easily extended to the case $|\Sig|<\infty$ or, more generally, $|\Sig\cap [n,n+1]|\le \varepsilon<1$ for all $n$ large enough.
\item[(iii)] Similar results for Hamiltonians with $\delta$-type interactions can be found in \cite{AKM_10} and \cite{hrymyk12}.
\end{itemize}
\end{remark}

\subsection{The quadratic form}\label{ss:qform}

Consider the following two forms in $L^2(\R)$
\begin{align}\label{eq:t01}
\gt_{0}^0[f] & =\int_{\R}|f^{[1]}|^2dP(x), & \gq[f] & =\int_{\R} q(x) |f|^2\,dx,
\end{align}
defined on the respective domains
\be\label{eq:t02}
\dom {\gt_{0}^0} = W^{1,2}_{\comp}(\R;dP) =\{f\in L^2_{\comp}(\R): f\in AC_\loc(\R;dP),\ f^{[1]}\in L^2(\R;|dP|)\}
\ee
and
\be\label{eq:t03}
\dom{\gq} =\{f\in L^2(\R): |\gq[f]|<\infty\}.
\ee
Hereby, note that the form $\gq$ is  lower semibounded (and hence closed) if so is $q$.
Let us introduce the form $\gt^0$ as a form sum of the two forms $\gt_{0}^0$ and $\gq$:
\begin{align}\label{eqnform}
\gt^0[f] & =\gt_{0}^0[f]+\gq[f], & \dom{\gt^0} & =\dom{\gt_{0}^0}\cap \dom{\gq}=\dom{\gt_{0}^0}.
\end{align}
The next result establishes a connection between the form $\gt^0$ and the operator $\rH$.

\begin{lemma}\label{lem:3.2}
\begin{itemize}
\item[(i)] If $f\in\dom {\rH^0}$, then $f\in\dom{\gt^0}$ with  $(\rH^0 f,f) = \gt^0[f]$.
\item[(ii)] Assume additionally that $\Sig$ satisfies at least one of the conditions (i)--(iii) of Theorem \ref{th:gpw}. If the form $\gt^0$ is lower semibounded, then it is closable and the operator associated with its closure $\gt=\overline{\gt^0}$ coincides with the self-adjoint operator $\rH$.
\end{itemize}
\end{lemma}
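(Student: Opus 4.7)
The plan is to verify (i) by an integration-by-parts computation, and then bootstrap (ii) from Theorem~\ref{th:gpw} through Friedrichs' extension; the main technical hurdle is the identification of $\overline{\gt^0}$ with the Friedrichs form on the whole domain $W^{1,2}_\comp(\R;dP)$.

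For (i), fix $f\in\dom{\rH^0}$, so $f$ has compact support in $(a,b)$ and $f^\qd\in AC_\loc(a,b)$. Since $f^\qd$ is continuous while $\overline{f}$ is locally of bounded variation with $d\overline{f}=\overline{f^\qd}\,dP$, the product rule for a continuous and a BV factor gives
\begin{equation*}
d(f^\qd\overline{f})=(f^\qd)'\overline{f}\,dx+|f^\qd|^2\,dP.
\end{equation*}
Integrating over any compact interval that contains $\supp{f}$ in its interior makes the left-hand side vanish, and combining this with $\tau f=-(f^\qd)'+qf$ produces $(\rH^0 f,f)=\int_{\R}|f^\qd|^2\,dP+\int_{\R}q|f|^2\,dx=\gt^0[f]$. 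Membership $f\in\dom{\gt^0}$ is then immediate: $df=0$ off $\supp{f}$ and the Lebesgue part of $dP$ force $f^\qd\equiv 0$ there (using continuity of $f^\qd$), so $f^\qd$ is continuous with compact support, hence in $L^2(\R;|dP|)$.

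For (ii), the reduction is standard. Lower semiboundedness of $\gt^0$ transfers via (i) to $\rH^0$, hence to its closure $\rH_{\min}=\overline{\rH^0}=\rH^*$. Theorem~\ref{th:gpw} then gives self-adjointness of $\rH_{\min}$, so $\rH=\rH^{**}=\rH_{\min}$; in particular, $\rH$ is self-adjoint and $\rH^0$ is essentially self-adjoint with closure $\rH$. Friedrichs' extension applied to $\rH^0$ produces the closed lower semibounded form $\gt_F:=\overline{\gt^0\upharpoonright\dom{\rH^0}}$ associated with the Friedrichs extension, which here equals $\rH$. The task reduces to $\overline{\gt^0}=\gt_F$, and since $\gt_F\subseteq\overline{\gt^0}$ is automatic once closability of $\gt^0$ is known, the real content is the reverse inclusion $\gt^0\subseteq\gt_F$; this one inclusion simultaneously delivers closability (as a restriction of the closed form $\gt_F$) and the desired identification. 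Concretely, for $f\in\dom{\gt^0}$ one must produce $f_n\in\dom{\rH^0}$ with $f_n\to f$ in $L^2(\R)$ and $\gt^0[f_n-f]\to 0$; then $\{f_n\}$ is Cauchy in the $\gt_F$-norm, placing $f$ in $\dom{\gt_F}$ with matching form value.

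The main obstacle is this approximation, and here the assumption on $\Sig$ is essential. The recipe is to mollify the quasi-derivative: set $g:=f^\qd\in L^2(\R;|dP|)$, compactly supported in some fixed $[c,d]\supseteq\supp{f}$; pick $g_n\in C^\infty_c(\R)$ with $g_n\to g$ in $L^2(\R;|dP|)$ (by Radon regularity of $|dP|$); and add a small corrective multiple of a smooth bump supported in a Lebesgue-thick gap of $\Sig$ (such a gap is available in each of the three cases of Theorem~\ref{th:gpw}) to enforce $\int_{\R}g_n\,dP=0$, which is needed so that $f_n(x):=\int_a^x g_n\,dt+\int_{[a,x)} g_n\,d\nu$ remains compactly supported. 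Then $f_n^\qd=g_n\in AC_\loc(\R)$ and $\tau f_n=-g_n'+qf_n\in L^2(\R)$, so $f_n\in\dom{\rH^0}$; the Cauchy--Schwarz estimate $|f_n(x)-f(x)|^2\le 2\,|dP|([c,x))\,\|g_n-g\|_{L^2(|dP|)}^2$ on the enlarged compact absorbing the corrector controls the $q$-contribution via $q\in L^1_\loc$, while $\|g_n-g\|_{L^2(|dP|)}\to 0$ directly handles $\int|f_n^\qd-f^\qd|^2\,dP$. The delicate bookkeeping is simultaneously respecting the neutrality constraint and preserving the $L^2(|dP|)$-convergence within a common compact.
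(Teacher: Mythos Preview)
For part~(i) your integration-by-parts argument is essentially the paper's: both exploit that $f^\qd$ is continuous while $\overline{f}$ is locally $BV$ with $d\overline{f}=\overline{f^\qd}\,dP$, and your observation that $f^\qd$ inherits compact support is a clean way to confirm $f\in\dom{\gt^0}$.

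For part~(ii) the paper is far more terse than you. It simply asserts that lower semiboundedness of $\gt^0$ yields closability, invokes Theorem~\ref{th:gpw} for essential self-adjointness of $\rH^0$, and then states that $\dom{\rH^0}$ is a core for $\overline{\gt^0}$, without any approximation argument. Your reduction to the single inclusion $\gt^0\subseteq\gt_F$ is exactly the content hidden behind those assertions, and your observation that this one inclusion simultaneously delivers closability (as a restriction of the closed form $\gt_F$) and the identification $\overline{\gt^0}=\gt_F$ is the right way to organize the argument.

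However, your explicit construction has a genuine gap. You claim $f_n\in\dom{\rH^0}$ because ``$\tau f_n=-g_n'+qf_n\in L^2(\R)$''. With $g_n\in C^\infty_c(\R)$ you certainly have $g_n'\in L^2$, but $qf_n\in L^2$ would require $q\in L^2$ on the compact support of $f_n$; the standing hypothesis is only $q\in L^1_\loc(\R)$. For $q\notin L^2_\loc$ your smooth-quasi-derivative approximants need not lie in $\dom{\rH^0}$ at all, and then the form-norm convergence $\gt^0[f_n-f]\to 0$ you establish (correctly using $q\in L^1_\loc$ to control the potential term via uniform convergence on a compact) says nothing about membership of $f$ in $\dom{\gt_F}$. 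A repair must produce approximants that genuinely belong to the maximal domain; one route is to work on a fixed compact interval containing $\supp f$, where the problem is regular and $\dom{\rH^0}$ is rich enough (via variation of parameters for $\tau u=h$ with $h\in L^2$ of compact support) to approximate in the energy norm without ever separating $-g_n'$ from $qf_n$.
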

\begin{proof}
(i) Let $f\in\dom{\rH^0}=\dom {\rH}\cap L^2_{\comp}(\R)$ and integrate by parts to obtain
\begin{align}\begin{split}
(\rH f,f)& =(\rH^0f,f)=\int_{\R}\tau f(x) \overline{f(x)}\, dx\\
&=-\int_{\R} \overline{f(x)}\, df^\qd(x)+\gq[f]=\int_{\R} f^\qd(x)\, d\overline{f(x)}+\gq[f]\\
&=\int_{\R}|f^{\qd}(x)|^2dP(x)+\gq[f]=\gt^0_0[f]+\gq[f]=\gt^0[f].
\end{split}\end{align}

(ii) If the form $\gt^0$ is lower semibounded, then (i) implies that so is the operator $\rH^0$ and the form $\gt^0$ is closable.
Thus $\rH^0$ is essentially self-adjoint by Theorem \ref{th:gpw} (see also Remark \ref{rem:4.2} (i)). To complete the proof of (ii) it remains to note that $\dom{\rH^0}$ is a core for $\gt=\overline{\gt^0}$.
\end{proof}

\begin{remark}
If $d\nu$ is a discrete measure $d\nu(x)=\sum_{k=1}^\infty\beta_k\delta(x-x_k)$, then
\be
\int_{\R}|f^{\qd}|^2d\nu(x)=\sum_{k=1}^\infty\beta_k|f'(x_k)|^2=\sum_{k=1}^\infty\frac{|f(x_k+)-f(x_k-)|^2}{\beta_k}.
\ee
Moreover, in this case the domain of $\gt$ is $W^{1,2}(\R\backslash X)$, where $X=\{x_k\}_{k=1}^\infty$.
Using this form, the spectral properties (discreteness of the spectrum, essential spectrum, etc.) of the corresponding lower semibounded Hamiltonian $\rH=\rH_{X,\gB,q}$  were studied in great detail in \cite{KosMal12}.
\end{remark}

 \section{Negative spectrum}

 Let us recall the following fact, known as the Hahn decomposition (cf.\ \cite[Section 3.1]{bo}): For any signed Borel measure $d\nu$   there are two disjoint Borel sets $\Omega_+$, $\Omega_-$ such that $(a,b)=\Omega_+\cup\Omega_-$ and for any Borel set $E_\pm\subseteq \Omega_\pm$ it holds that $\pm d\nu(E_\pm)  \ge 0$.
 Moreover, if $(a,b)=\tilde{\Omega}_+\cup\tilde{\Omega}_-$ is another decomposition with this property, then $\Omega_\pm$ and $\tilde{\Omega}_\pm$ differ at most in a set of $|d\nu|$-measure zero.
 This decomposition is called {\em the Hahn decomposition} and for each Borel set $E\subseteq (a,b)$ we set
 \be\label{eq:jor}
  d\nu_\pm(E)= d\nu(E\cap\Omega_\pm).
 \ee
 Hereby, notice that
 \be
  d\nu(E)=d\nu_+(E)+d\nu_-(E),
 \ee
which is called {\em the Jordan decomposition} of $d\nu$. The measures $d\nu_+$ and $d\nu_-$ are called {\em the positive} and {\em the negative} part of $d\nu$, respectively. Finally, we introduce the following two quantities
 \be\label{eq:k_-}
 \kappa_-(d\nu)=\begin{cases}
\#\supp{d\nu_-}, & d\nu_- \ \text{is pure point},\\
\infty, & \text{otherwise},
\end{cases}
\ee
and for $\rH=\rH^*$
\be
\kappa_-(\rH)=\dim\ran{\chi_{(-\infty,0)}(\rH)}.
\ee
Note that $\kappa_-(\rH)$ is the number of negative eigenvalues of $\rH$ if $\kappa_-(\rH)$ is finite.

 We are now in the position to formulate the main result of this section.

 \begin{theorem}\label{th:kappa-}
 Let $q\equiv 0$ and $d\nu$ be a signed Borel measure on $\R$ which is singular with respect to the Lebesgue measure. If $\rH$ is the corresponding self-adjoint operator in $L^2(\R)$,  then
 \be\label{eqnkap}
 \kappa_-(\rH)=\kappa_-(d\nu).
 \ee
 \end{theorem}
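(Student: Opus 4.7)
My plan is to apply the variational characterization
\[
\kappa_-(\rH) \;=\; \sup\bigl\{\dim V : V\subset\dom{\rH^0},\ \gt^0[f]<0 \text{ for all } f\in V\setminus\{0\}\bigr\},
\]
valid because $\rH^0$ has closure $\rH=\rH^*$ (so that $\dom{\rH^0}$ is a core) and $(\rH f,f)=\gt^0[f]$ on $\dom{\rH^0}$ by Lemma~\ref{lem:3.2}(i). With $q\equiv 0$ the form splits as
\[
\gt^0[f] \;=\; \int_\R |f^\qd|^2\,dx \;+\; \int_\R |f^\qd|^2\,d\nu_+ \;+\; \int_\R |f^\qd|^2\,d\nu_-,
\]
and only the last (non-positive) integral can drive $\gt^0$ below zero.

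For the upper bound, the case of $d\nu_-$ not pure point is trivial, so I assume $d\nu_-$ is pure point and let $\{y_j\}_{j=1}^n$ enumerate its atoms, with weights $\beta_j<0$. The linear map $L:\dom{\rH^0}\to\C^n$, $L(f):=(f^\qd(y_j))_j$, satisfies $\int|f^\qd|^2\,d\nu_-=\sum_j\beta_j|f^\qd(y_j)|^2=0$ on $\ker L$, so $\gt^0\ge 0$ there. Any subspace $V$ with $\gt^0|_V<0$ thus meets $\ker L$ only in $\{0\}$, giving $\dim V\le n=\kappa_-(d\nu)$.

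For the lower bound I would build test functions $f_j(x):=\int_{-\infty}^x(\psi_j-c_j\rho_j)\,dP$, with $\psi_j$ a continuous piecewise-linear tent supported in a small neighborhood $U_j=(y_j-\eps,y_j+\eps)$ of a chosen point $y_j$ satisfying $\psi_j(y_j)=1$, and $-c_j\rho_j$ a low plateau on a long interval $K_j$ disjoint from $\supp{d\nu}$ and from every other $U_k$, $K_k$; the scalar $c_j$ is fixed so that $\int_\R(\psi_j-c_j\rho_j)\,dP=0$, which ensures that $f_j$ is compactly supported and lies in $\dom{\rH^0}$. Disjointness of supports yields $\gt^0[f_j]=\int\psi_j^2\,dP+c_j^2\int\rho_j^2\,dx$, and the tail contribution vanishes as $|K_j|\to\infty$. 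In the pure-point case, choosing $\eps$ small enough that $d\nu_+(U_j)+|d\nu_-|(U_j\setminus\{y_j\})\ll|\beta_j|$ (possible because $d\nu_+(\{y_j\})=0$ as $y_j\in\Omega_-$) gives $\gt^0[f_j]\le \beta_j+O(\eps)+O(|K_j|^{-1})<0$. Since the $f_j$ for $j=1,\dots,m$ have disjoint supports, $\gt^0$ is block-diagonal on their span, producing an $m$-dimensional negative subspace and hence $\kappa_-(\rH)\ge m$ for every $m\le n$.

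The hard part is the non-pure-point case, where $\kappa_-(d\nu)=\infty$ and one must exhibit negative modes of arbitrary dimension. Here I would invoke the differentiation theorem for mutually singular Radon measures: at $|d\nu_-|^{sc}$-almost every $y$ one has $|d\nu_-|((y-\eps,y+\eps))/(2\eps)\to\infty$ and $d\nu_+((y-\eps,y+\eps))/|d\nu_-|((y-\eps,y+\eps))\to 0$ as $\eps\downarrow 0$. Given $m$, selecting $m$ distinct such points and then $\eps_j$ small enough that the $U_j$ are disjoint and $|d\nu_-|(U_j)$ dominates $|U_j|+d\nu_+(U_j)$ by an arbitrarily large factor $M$, the same bump-plus-tail construction yields $\gt^0[f_j]\le(3-M)\eps_j+O(|K_j|^{-1})<0$, so $\kappa_-(\rH)\ge m$ for every $m$. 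The principal subtlety throughout is that none of the three measures $dx$, $d\nu_+$, $d\nu_-$ carries a conventional density in the singular-continuous regime, and the differentiation theorem is precisely the device that allows the negative $d\nu_-$-contribution to overpower the positive $dx$- and $d\nu_+$-contributions.
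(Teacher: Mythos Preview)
Your upper bound via the linear map $L(f)=(f^\qd(y_j))_j$ is correct and cleaner than the paper's terse appeal to ``rank $N$ perturbation.'' The overall variational framework through the core $\dom{\rH^0}$ is also sound.

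The lower bound, however, has a real gap: you require the tail interval $K_j$ to be disjoint from $\supp{d\nu}$, but the topological support of a singular measure can be all of $\R$ (e.g.\ a sum of point masses on a dense set, or a singular continuous measure with full support), so no such $K_j$ exists. One can try to relax this, but once $K_j$ meets $\supp{d\nu_-}$ the denominator $\int\rho_j\,dP$ used to define $c_j$ can be small or even vanish, and the tail is no longer under control. A second gap is in the singular continuous case: a tent $\psi_j$ equals $1$ only at the single point $y_j$, so $\int\psi_j^2\,d\nu_-$ need not be close to $d\nu_-(U_j)$; your bound ``$(3-M)\eps_j$'' tacitly assumes it is. The differentiation theorem controls $d\nu_\pm$ on balls, but relating $\int\psi_j^2\,d\nu_-$ to $|d\nu_-|(U_j)$ requires further work (e.g.\ a plateau rather than a tent, plus separate control of the edge contributions from $d\nu_+$).

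The paper avoids both difficulties by working in the form domain $\dom{\gt^0}=W^{1,2}_{\comp}(\R;dP)$ rather than the operator core. There $f^{[1]}$ need only lie in $L^2(\R;|dP|)$, not in $AC_{\loc}$, so one may take $f_k^{[1]}$ to be the indicator of a chosen Borel set $\Omega_k\subset\Sigma_{\min}^-$ (a subset of a minimal support of $d\nu_-$) together with a constant on a short Lebesgue interval, and equal to $0$ on $\Sigma_{\min}\setminus\Omega_k$. This makes $\int|f_k^{[1]}|^2\,d\nu=d\nu(\Omega_k)=d\nu_-(\Omega_k)$ exactly, with no $d\nu_+$ contribution and no need for the differentiation theorem; the Lebesgue part of the tail is then tuned so that $f_k$ has compact support. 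Your route can likely be repaired with enough care, but the form-domain construction with discontinuous quasi-derivatives is what makes the argument clean.
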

 \begin{proof}

Firstly, assume that $\kappa_-(d\nu)$ is finite. This means that we may choose $\Sigma_-=\{x_k\}_{k=1}^N$, where $N=\kappa_-(d\nu)<\infty$. In particular, this implies that the Hamiltonian $\rH$ and hence the form $\gt$ are lower semibounded. Moreover, each $f\in \dom {\rH}$ satisfies the following jump condition at $x_k$:
 \begin{align}
 f'(x_k+) & =f'(x_k-), &  f(x_k+)-f(x_k-) & =\gB_k f'(x_k),
 \end{align}
 where $\gB_k = \nu(\{x_k\})<0$.

 Choose  $\varepsilon_k=N|\gB_k|>0 $ and define the following functions
 \be\label{eq:test_f}
 f_k^{[1]}(x)=\begin{cases}
 1,& x= x_k,\\
 \frac{1}{N},& x\in (x_k,x_k+\varepsilon_k]\backslash\Sigma,\\
 0, &x\in \Sigma\backslash \{x_k\},\\
 0,& x \in\R\backslash[x_k,x_k+\varepsilon_k],
 \end{cases}
 \ee
for $k\in\{1,\dots,N\}$.  Note that the functions
 \be\label{eq:test_f2}
 f_k(x) =\int_{x_k-1}^x f_k^{[1]}(t)dP(t)=\begin{cases}
 0,& x\in \R\backslash (x_k,x_k+\varepsilon_k],\\
 \frac{1}{N}(x-x_k-\varepsilon_k),& x \in (x_k,x_k+\varepsilon_k],
 \end{cases}
 \ee
 belong to $W^{1,2}(\R;|dP|)\cap L^2_{\comp}(\R)$ and hence also to $\dom{\gt}$.
 Moreover, for an arbitrary linear combination $f=\sum_{k=1}^Nc_kf_k$ (with $c_k\in\C$), we get
 \begin{align}\begin{split}
 \gt[f]&=\int_{\R}\Big|\sum_{k=1}^N\frac{1}{N}c_k\chi_{(x_k,x_k+\varepsilon_k)}(x)\Big|^2dx+\sum_{k=1}^N\gB_k|c_k|^2\\
 &< N\sum_{k=1}^N \frac{1}{N^2}\int_{x_k}^{x_k+N|\gB_k|}|c_k|^2dx-\sum_{k=1}^N |\gB_k||c_k|^2=0.
 \end{split}\end{align}
Note that the inequality is strict since the functions $f_k$, $k\in\{1,\dots,N\}$, are linearly independent. 
Hence we conclude that $\kappa_-(\rH)\ge N$. Since the converse inequality follows from the fact that $d\nu_-$ is a rank
$N$ perturbation, we arrive at \eqref{eqnkap}.

 It remains to prove the case when $\kappa_-(d\nu)=\infty$.
 Without loss of generality we can assume that our operator is lower semibounded. Indeed, if $\rH$ is not lower semibounded, then $\kappa_-(\rH)=\infty$ is obvious.

By definition, either $d\nu_-$ is pure point and supported on an infinite set or the singular continuous part of $d\nu_-$ is nontrivial. In the first case, we can prove the claim by using the above argument. Namely, using test functions \eqref{eq:test_f2}, \eqref{eq:test_f}, we can show that $\kappa_-(\rH)>n$ for any $n\in\N$. So, assume that the singular part of $d\nu_-$ is nontrivial. Denote by $\Sigma_{\min}$ and $\Sigma_{\min}^-$ minimal supports of $d\nu$ and $d\nu_-$, respectively. The latter means that both $\Sigma_{\min}$ and $\Sigma_{\min}^-$ have Lebesgue measure zero and $d\nu(\Omega\backslash\Sigma_{\min})=0$ and $d\nu_-(\Omega\backslash\Sigma_{\min}^-)=0$ for any measurable set $\Omega\subseteq \R$.
This, in particular, implies that there is a sequence $\{\Omega_k\}_{k=1}^\infty$ of bounded subsets of $\Sigma_{\min}^-$ such that
\begin{align}
\Omega_k & \subset \Sigma_{\min}^-, & d\nu_-(\Omega_k) & <0, &  |\Omega_k| & =0, & \Omega_i\cap\Omega_j & =\emptyset,\quad i\neq j.
\end{align}
Let also $\Omega_k\subset[x_k,y_k]\subset \R$ for every $k\in \N$.

Let $N\in \N$. Set $\varepsilon_k=N{|\gB_k|}$, where $\gB_k =d\nu(\Omega_k)=d\nu_-(\Omega_k)<0$ and define the functions
\be
 f_k(x) =\int_{x_k-1}^xf_k^{[1]}(t)dP(t), \quad x\in\R,
 \ee
 for every $k\in\N$, where
 \begin{align}
 f_k^{[1]}(x)=\begin{cases}
 1,& x\in \Omega_k,\\
 \frac{1}{N},& x\in (y_k,y_k+\varepsilon_k]\backslash\Sigma_{\min},\\
0, &x\in \Sigma_{\min}\cap (y_k,y_k+\varepsilon_k],\\
 0,& x \notin\Omega_k\cup(y_k,y_k+\varepsilon_k].
 \end{cases}
 \end{align}
 Clearly, $f_k\in W^{1,2}(\R,|dP|)\cap L^2_{\comp}(\R)$ and hence also $f_k\in \dom{\gt}$. Next, for any finite sequence $\{c_k\}_{k=1}^N$  set $f=\sum_{k=1}^N c_k f_{k}$. As in the first part of the proof, we get
 \begin{align}\begin{split}
  \gt[f]&=\int_{\R}\Big|\sum_{k=1}^N \frac{c_k}{N}\chi_{(y_{k},y_{k}+\varepsilon_{k})}(x)\Big|^2dx+\sum_{k=1}^N\gB_{k}|c_k|^2\\
 &< \frac{1}{N}\sum_{k=1}^N \int_{y_{k}}^{y_{k}+N|\beta_{k}|}|c_k|^2dx-\sum_{k=1}^N|\gB_{k}||c_{k}|^2 =0
\end{split}\end{align}
as before. Hence we conclude that $\kappa_-(\rH)\ge N$ and since $N\in\N$ is arbitrary we get $\kappa_-(\rH)=\infty$.
 \end{proof}

 \begin{remark}
In the case of $\delta'$-point interactions, Theorem \ref{th:kappa-} was established in \cite{GolOri10} and \cite{KosMal10} (see also \cite{KosMal12b} and \cite{niz} for further details). Let us also mention that under additional  restrictive assumptions on $\Sigma$, Theorem \ref{th:kappa-} was established in \cite{braniz} by employing a different approach.
 \end{remark}

  \begin{remark}
 In the case of a Schr\"odinger operator with $\delta$-interactions, the problem of estimating the number of negative eigenvalues is
rather nontrivial, even in the case of finitely many point interactions.  For further details we refer to, e.g., \cite{AKMN13}, \cite{albniz03}, \cite{GolOri10}, \cite{KM_09}, \cite{KosMal12b}, and \cite{ogu}.
\end{remark}
Using Theorem \ref{th:kappa-}, we can easily prove the following statement.

\begin{corollary}\label{cor:5.3}
In addition to the assumptions of Theorem \ref{th:kappa-}, suppose that $\Sigma_-$ is a bounded subset of $\R$. Then the operator $\rH$ is lower semibounded if and only if $\Sigma_-$ can be chosen finite. Moreover, if $\Sigma_-$ is infinite, then the negative part of the spectrum of $\rH$ is discrete.
\end{corollary}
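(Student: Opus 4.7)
The strategy is to first show $\sigma_{\mathrm{ess}}(\rH)\subseteq[0,\infty)$ (which yields the final ``discreteness'' assertion directly), and then to combine this with Theorem~\ref{th:kappa-} and a Rayleigh-quotient computation to obtain the equivalence.

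Since $\Sigma_-$ is bounded, pick $\rho>0$ with $\Sigma_-\subset(-\rho,\rho)$ and apply Glazman-type decoupling at $\pm\rho$. Up to a finite-rank resolvent perturbation this splits $\rH$ into a regular measure Sturm--Liouville operator on $[-\rho,\rho]$ (having compact resolvent, hence purely discrete spectrum) together with two half-line operators on $(-\infty,-\rho)$ and $(\rho,\infty)$, where $d\nu$ coincides with the non-negative measure $d\nu_+$; on these outer pieces the associated forms $\int|f^{[1]}|^2\,dP$ are non-negative, so their spectra lie in $[0,\infty)$. Weyl's invariance of essential spectrum then gives $\sigma_{\mathrm{ess}}(\rH)\subseteq[0,\infty)$, so the negative part of $\sigma(\rH)$ consists of isolated eigenvalues of finite multiplicity. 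Now Theorem~\ref{th:kappa-} gives $\kappa_-(\rH)=\kappa_-(d\nu)$: if $\Sigma_-$ is finite, then $d\nu_-$ is a finite sum of point masses, $\kappa_-(\rH)=\#\Sigma_-<\infty$, and $\rH$ has only finitely many negative eigenvalues, hence is lower semibounded.

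For the converse, assume $\Sigma_-$ is infinite. Using $|d\nu_-|(\R)<\infty$, I will exhibit pairwise disjoint Borel sets $\Omega_k$ of Lebesgue measure zero, each contained in a bracketing interval $[x_k,y_k]\subset[-\rho,\rho]$, with $\beta_k:=d\nu_-(\Omega_k)<0$ and $|\beta_k|\to 0$. This is immediate from summability when $d\nu_-$ has infinitely many atoms; otherwise $d\nu_-$ has a non-trivial singular continuous part, and atomlessness together with the intermediate-value theorem applied to the $|d\nu_-|$-CDF lets us produce subsets of arbitrarily small total variation. Then take the test functions $f_k$ from the proof of Theorem~\ref{th:kappa-} with $N=2$: the same computation as there yields $\gt[f_k]=-|\beta_k|/2$, while the explicit piecewise shape of $f_k$ (bounded by $|\beta_k|$ on $[x_k,y_k]$ and affine of order $|\beta_k|$ on an adjacent interval of length $2|\beta_k|$) gives $\|f_k\|^2\le 2\rho|\beta_k|^2+\tfrac{2}{3}|\beta_k|^3=O(|\beta_k|^2)$. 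Normalizing, the Rayleigh quotient $\gt[f_k/\|f_k\|]\to-\infty$, so $\gt$, and hence $\rH$, is not lower semibounded. The main technical obstacle is precisely the $L^2$-norm bound on $f_k$ in the singular-continuous case, where $[x_k,y_k]$ can have non-negligible Lebesgue measure; the uniform estimate $y_k-x_k\le 2\rho$ coming from the boundedness of $\Sigma_-$ is exactly what rescues the argument and completes the proof.
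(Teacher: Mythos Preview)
Your proof is correct and the first half (Glazman decoupling at $\pm\rho$, identifying the outer pieces as non-negative, invoking Theorem~\ref{th:kappa-}) matches the paper's argument essentially verbatim. The genuine difference is in the direction ``$\Sigma_-$ infinite $\Rightarrow$ $\rH$ not lower semibounded.'' The paper never touches explicit Rayleigh quotients here: once the decoupling is in place, the middle operator $\rH_{(c,d)}$ has \emph{compact resolvent} (regular endpoints), so its spectrum is a discrete sequence with $|\lambda_n|\to\infty$; combined with $\kappa_-(\rH_{(c,d)})=\infty$ (obtained by applying Theorem~\ref{th:kappa-} to the measure $d\tilde\nu$ that agrees with $d\nu$ on $(c,d)$ and vanishes outside), this forces the negative eigenvalues to run off to $-\infty$, and unboundedness below survives the finite-rank resolvent perturbation. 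Your route instead manufactures sets $\Omega_k$ with $|\beta_k|\to 0$ and estimates $\|f_k\|_{L^2}$ explicitly via the uniform bound $y_k-x_k\le 2\rho$. This is a legitimately different (and more quantitative) argument; the paper's is shorter because it trades your hands-on computation for the single soft fact that a compact-resolvent operator with infinitely many negative eigenvalues cannot be bounded below. One minor caution: your final line ``so $\gt$, and hence $\rH$, is not lower semibounded'' tacitly uses that the test functions $f_k\in W^{1,2}_c(\R;dP)$ lie in the form domain of $\rH$ whenever $\rH$ is bounded below---this is exactly Lemma~\ref{lem:3.2}(ii), so the contradiction closes, but it is worth saying explicitly.
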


 \begin{proof}
 Since $\Sigma$ is bounded, there is a bounded interval $(c,d)$ such that $\Sigma\subset (c,d)$. Arguing as in the proof of Theorem \ref{th:gpw}, the operator $\rH$ is a rank two perturbation (in the resolvent sense) of the orthogonal sum $\rH_{(-\infty,c)}\oplus \rH_{(c,d)}\oplus \rH_{(d,+\infty)}$ of restricted operators with Dirichlet boundary conditions at their endpoints. Note that the operators $\rH_{(-\infty,c)}$ and $\rH_{(d,+\infty)}$ are nonnegative. Moreover, the spectrum of the operator $\rH_{(c,d)}$ is discrete by \cite[Corollary~8.2]{measureSL}. 
 We can show that the negative spectrum of $\rH_{(c,d)}$ is finite if and only if $\Sigma_-$ can be chosen finite (in order to show this one needs to consider a new measure $d\tilde{\nu}$ which coincide with $d\nu$ on $(c,d)$ and equals $0$ on $\R\setminus(c,d)$ and then to apply Theorem \ref{th:kappa-}).  Otherwise, the negative spectrum of $\rH_{(c,d)}$ is unbounded from below since it is discrete. Finally, noting that these spectral properties are stable under finite rank perturbations, the claim follows.
 \end{proof}

 \begin{remark}
Using another approach, Corollary \ref{cor:5.3} was established in \cite{braniz} under the additional assumptions that $\Sigma$ is a compact subset of $\R$ of Lebesgue measure zero.
 \end{remark}

 \begin{corollary}\label{cor:5.4}
If the operator $\rH$ is lower semibounded, then the negative part $d\nu_-$ of $d\nu$ is a discrete measure.
\end{corollary}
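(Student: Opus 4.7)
The strategy is to localize to bounded subintervals and to invoke the mechanism already used in the proof of Corollary~\ref{cor:5.3}. Assume $\rH$ is lower semibounded, with associated form $\gt$ satisfying $\gt[f]\ge -C\|f\|^2$ for some $C\ge 0$ and all $f\in\dom{\gt}$.

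First, I would fix an arbitrary bounded open subinterval $(c,d)$ with $[c,d]\subset(a,b)$ and $c,d\notin\supp{d\nu}$ (possible because $|\supp{d\nu}|=0$), and consider the Dirichlet restriction $\rH_{(c,d)}$ on $L^2(c,d)$. Any function in its form domain extends by zero to an element of $\dom{\gt}$ with identical form value and $L^2$-norm, so $\rH_{(c,d)}\ge -C$. Since $(c,d)$ is a regular interval for $\tau$, the spectrum of $\rH_{(c,d)}$ is purely discrete by \cite[Corollary~8.2]{measureSL}, and a lower semibounded operator with purely discrete spectrum has only finitely many negative eigenvalues. Hence $\kappa_-(\rH_{(c,d)})<\infty$.

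Second, I would run the auxiliary-measure trick from the proof of Corollary~\ref{cor:5.3}: let $d\tilde\nu=\chi_{(c,d)}\,d\nu$, and let $\tilde\rH$ be the self-adjoint operator on $L^2(\R)$ associated with $\tilde P(x)=x+\tilde\nu(x)$ and $q\equiv 0$. Because $c,d$ are not atoms of $d\nu$, $\tilde\rH$ differs from the orthogonal sum $\rH_{(-\infty,c)}\oplus \rH_{(c,d)}\oplus\rH_{(d,+\infty)}$ (Dirichlet at $c,d$) by a rank-two resolvent perturbation; both operators have essential spectrum $[0,+\infty)$, so min-max gives $|\kappa_-(\tilde\rH)-\kappa_-(\rH_{(c,d)})|\le 2$, hence $\kappa_-(\tilde\rH)<\infty$. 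Applying Theorem~\ref{th:kappa-} to $\tilde\rH$, whose hypotheses are satisfied since $d\tilde\nu$ inherits singularity with respect to Lebesgue measure, yields $\kappa_-(d\tilde\nu)=\kappa_-(\tilde\rH)<\infty$; by the very definition of $\kappa_-$ this means $d\tilde\nu_-=d\nu_-|_{(c,d)}$ is pure point with only finitely many atoms. Since the interval $(c,d)$ was arbitrary, $d\nu_-$ is globally pure point and its topological support has no accumulation point in $(a,b)$, which is exactly the statement that $d\nu_-$ is a discrete measure.

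The main obstacle will be to make the rank-two decoupling step rigorous. Choosing the cut points $c,d$ outside $\supp{d\nu}$ is essential: no $\delta'$-jump is destroyed by the cut, and the transition from the transmission (continuity of $f$ and $f^{[1]}$) conditions of $\tilde\rH$ to the Dirichlet conditions of the orthogonal sum becomes a bona fide rank-two self-adjoint perturbation of a common symmetric operator with deficiency indices $(2,2)$ at $\{c,d\}$. The bound on $\kappa_-$ then follows from the standard min-max principle, exactly as tacitly used in the proof of Corollary~\ref{cor:5.3}, so no new ingredient is required.
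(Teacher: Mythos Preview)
Your proposal is correct and follows essentially the same route as the paper---localize to a bounded interval, apply the rank-two decoupling and the auxiliary-measure trick from Corollary~\ref{cor:5.3} to invoke Theorem~\ref{th:kappa-}, and exploit that the spectrum on a regular interval is discrete---only organized as a direct argument rather than the paper's contrapositive (the paper assumes $d\nu_-$ is not discrete, picks $(c,d)$ with $\#((c,d)\cap\Sigma_-)=\infty$, and shows $\rH_{(c,d)}$, hence $\rH$, is unbounded below). One small correction: the standing hypotheses only require $d\nu$ to be singular, not $|\supp{d\nu}|=0$, so your choice $c,d\notin\supp{d\nu}$ is not always available; it suffices to take $c,d$ to be non-atoms of $d\nu$, and the rest of your argument goes through unchanged.
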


\begin{proof}
Assume that $d\nu_-$ is not discrete. Then there is a finite subinterval $(c,d)\subset \R$ such that $\# \big((c,d)\cap \Sigma_-\big)=\infty$.
The operator $\rH$ can be considered as a rank two perturbation of the orthogonal sum $\rH_{(-\infty,c)}\oplus \rH_{(c,d)}\oplus \rH_{(d,+\infty)}$ of restricted operators with Dirichlet boundary conditions at their endpoints. Arguing as in the proof of Corollary \ref{cor:5.3}, we can show that the operator $\rH_{(c,d)}$ is not bounded from below since its spectrum is discrete and $\kappa_-(\rH_{(c,d)})=\# \big((c,d)\cap \Sigma_-\big)=\infty$. Since lower semiboundedness is stable under finite rank perturbations, we conclude that $\rH$ is unbounded from below.
\end{proof}

 \begin{remark}
If $\kappa_-(d\nu)=\infty$, then one may try to prove Theorem \ref{th:kappa-} by approximating the form $\gt$ associated with the Hamiltonian $\rH$ by forms $\gt_n$, $n\in\N$, such that $\kappa_-(\gt_n)=\kappa_-(d\nu_n)=n$ and the corresponding measures $d\nu_n$ converge weakly-$*$ to $d\nu$. This proof clearly works in the case when $\gt$ is lower semibounded and the negative part $d\nu_-$ of $d\nu$ is discrete. However, if the negative part $d\nu_-$ has a nontrivial singular continuous component, then the latter is no longer true. First of all, for the form domains we get $\dom{\gt_n}\not\subseteq \dom{\gt}$ and, moreover, the test functions \eqref{eq:test_f2}, \eqref{eq:test_f} do not belong to $\dom{\gt}$ since the functions from $\dom{\gt}$ are continuous on $\Sigma_-$. On the other hand, by Corollary \ref{cor:5.4} the Hamiltonian $\rH$ is not lower semibounded in this case and hence we cannot deduce the information about $\kappa_-(\rH)$ from $\kappa_-(d\nu)$.
 \end{remark}

 \section{Approximation by Hamiltonians with smooth coefficients}\label{seq:res}

 In this section we restrict our considerations to the regular case, that is, when $(a,b)$ is bounded, the measure $|d\nu|$ is finite and the function $q$ is integrable.
 Moreover, for simplicity we will assume that our interval is the unit interval $(0,1)$.
 We consider the operator $\rH_\rD$, which is the restriction of the maximal operator $\rH$ subject to Dirichlet boundary conditions,
\begin{align}\label{eq:l_dir}
\rH_\rD f & =\tau f, & \dom{\rH_\rD} & =\{f\in L^2(0,1): f\in \gD_{\max},\ f(0)=f(1)=0\}.
\end{align}
It follows from \cite[Section 7]{measureSL} that the operator $\rH_\rD$ is self-adjoint.

\begin{theorem}\label{th:approx}
Let $\{d\nu_k\}_{k=1}^\infty$ be a sequence of finite measures on $(0,1)$.
\begin{itemize}
\item[(i)] If $d\nu_k$ converges to $d\nu$ in a weak-$*$ topology, such that for all $f\in C[0,1]$
\be\label{eq:3.2}
\int_{(0,1)} f(t)d\nu_k(t)\to \int_{(0,1)} f(t)d\nu(t),\quad  k\to\infty,
\ee
then there is a subsequence $\{d\nu_{k(j)}\}_{j=1}^\infty$ such that the corresponding operators $\rH_{\rD,k(j)}$ (with the same potential $q$) converge to $\rH_\rD$ in the norm resolvent sense.
\item[(ii)] If, in addition, $d\nu$ and all $d\nu_k$ are nonnegative measures satisfying \eqref{eq:3.2} for all $f\in C[0,1]$, then the corresponding operators $\rH_{\rD,k}$ converge to $\rH_\rD$ in the norm resolvent sense and, moreover, for all $n\in\N$
\be\label{eq:3.3}
\lambda_n(k)\to \lambda_n,\quad  k\to\infty.
\ee
\end{itemize}
\end{theorem}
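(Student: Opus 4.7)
The approach is to express the resolvents via their Dirichlet Green's functions, which are built explicitly from the fundamental solutions of $(\tau_k-z)u=0$, and to establish convergence of these solutions from \eqref{eq:3.2}. Fix $z\in\C\setminus\R$ with $|\im z|$ large, and let $c_k(\cdot,z), s_k(\cdot,z)$ denote the fundamental solutions of $(\tau_k-z)u=0$ normalized by $c_k(0)=s_k^\qd(0)=1$ and $c_k^\qd(0)=s_k(0)=0$. By \eqref{eq:1_qd'} they satisfy the coupled Volterra system
\begin{align*}
u(x) & = u(0) + \int_0^x u^\qd(t)\,dt + \int_{[0,x)} u^\qd(t)\,d\nu_k(t),\\
u^\qd(x) & = u^\qd(0) + \int_0^x (q(t)-z)u(t)\,dt,
\end{align*}
and $G_k(x,y;z)$ is given by the standard algebraic expression in $c_k(\cdot,z), s_k(\cdot,z)$ and $s_k(1,z)$ (the latter being nonzero for large $|\im z|$ uniformly in $k$). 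Thus uniform convergence of the fundamental solutions on $[0,1]$ is what needs to be proved.

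By Banach--Steinhaus, \eqref{eq:3.2} forces $M:=\sup_k|d\nu_k|((0,1))<\infty$. A Gr\"onwall argument yields uniform bounds $|c_k|,|c_k^\qd|,|s_k|,|s_k^\qd|\leq C(z)$ on $[0,1]$, and the second equation shows that $\{c_k^\qd(\cdot,z)\}_k,\{s_k^\qd(\cdot,z)\}_k\subset C[0,1]$ are equicontinuous. Arzel\`a--Ascoli then produces a subsequence $\{k(j)\}$ along which the quasi-derivatives converge uniformly to limits $\tilde c^\qd, \tilde s^\qd$. To pass to the limit in the first integral equation I decompose the singular piece as
\[
\int_{[0,x)} u_{k(j)}^\qd\,d\nu_{k(j)} = \int_{[0,x)}(u_{k(j)}^\qd-\tilde u^\qd)\,d\nu_{k(j)} + \int_{[0,x)}\tilde u^\qd\,d\nu_{k(j)};
\]
the first summand is bounded by $M\|u_{k(j)}^\qd-\tilde u^\qd\|_\infty\to 0$, while the second converges to $\int_{[0,x)}\tilde u^\qd\,d\nu$ by \eqref{eq:3.2} together with a portmanteau-type argument, valid at every $x$ outside the (at most countable) atomic set of $d\nu$. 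Uniqueness of solutions to the limiting Volterra system identifies $\tilde c, \tilde s$ as the fundamental solutions of $(\tau-z)u=0$.

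Consequently $G_{k(j)}(x,y;z)\to G(x,y;z)$ pointwise almost everywhere on $(0,1)^2$, with a uniform pointwise bound; dominated convergence in the Hilbert--Schmidt norm yields $(\rH_{\rD,k(j)}-z)^{-1}\to (\rH_\rD-z)^{-1}$ in operator norm, proving (i). For (ii), the same argument applied to \emph{any} subsequence of $\{\rH_{\rD,k}\}$ produces a further subsequence with the same limit $(\rH_\rD-z)^{-1}$, so the full sequence converges in norm resolvent sense. The added nonnegativity ensures that $\rH_{\rD,k}$ and $\rH_\rD$ are all lower semibounded by $\mathrm{ess\,inf}\,q$, so their purely discrete spectra (cf.\ \cite[Corollary~8.2]{measureSL}) admit the natural enumeration $\lambda_1\leq\lambda_2\leq\dots$; norm resolvent convergence of operators with compact resolvents then gives \eqref{eq:3.3}.

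The main obstacle is the limit passage in $\int_{[0,x)} u_k^\qd\,d\nu_k$ above: the integrand, the measure, and the indicator of the integration domain all conspire against a direct use of weak-$*$ convergence. The Arzel\`a--Ascoli compactness of the quasi-derivatives is the crucial tool that decouples these sources of variation; everything else proceeds along standard lines.
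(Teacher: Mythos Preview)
Your overall strategy coincides with the paper's: represent the resolvent through the Dirichlet Green's function, establish pointwise a.e.\ convergence of the Green's functions with a uniform bound, and conclude norm (Hilbert--Schmidt) convergence by dominated convergence. The paper shortcuts the solution analysis by reducing to $q\equiv 0$ and $z=0$, where the fundamental solutions are simply $P(x)-P(0)$ and $P(x)-P(1)$; you instead treat general $q$ via the Volterra system and Arzel\`a--Ascoli on the quasi-derivatives, which is a legitimate and in fact more complete route.

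There is, however, a genuine gap in your limit passage for part~(i). The claim that
\[
\int_{[0,x)}\tilde u^{[1]}\,d\nu_{k(j)}\ \longrightarrow\ \int_{[0,x)}\tilde u^{[1]}\,d\nu
\]
``by a portmanteau-type argument, valid at every $x$ outside the atomic set of $d\nu$'' is \emph{false for signed measures}. Take $d\nu_k=\delta_{1/2+1/k}-\delta_{1/2-1/k}$: then $d\nu_k\to 0$ weak-$*$, yet $\nu_k([0,1/2))=-1$ for all large $k$, while the limit measure has no atom at $1/2$. The standard portmanteau theorem gives what you want only for nonnegative measures; for signed ones it simply fails at non-atomic points. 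Thus along your Arzel\`a--Ascoli subsequence you have no reason to get convergence of that integral for a.e.\ $x$.

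The fix --- and this is precisely what the paper does --- is to extract a \emph{further} subsequence along which the distribution functions $\nu_{k}(x)=\int_{[0,x)}d\nu_k$ converge to $\nu(x)$ at all but countably many $x$. One way to see this: by uniform boundedness of $|d\nu_k|$ and Banach--Alaoglu, pass to a subsequence along which the positive and negative parts $d\nu_k^\pm$ converge weak-$*$ separately to nonnegative limits $d\sigma^\pm$ with $d\sigma^+-d\sigma^-=d\nu$; then the ordinary portmanteau theorem for each part gives convergence of $\nu_k^\pm([0,x))$ at all continuity points of $\sigma^\pm$, hence of $\nu_k([0,x))$ at all but countably many $x$. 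With this in hand, and since your $\tilde u^{[1]}$ is absolutely continuous, an integration by parts (or just bounded convergence) yields the desired limit. The paper cites this subsequence fact directly from \cite[8.1.8~Proposition]{boII}.

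For part~(ii) your argument is sound: with nonnegative measures the portmanteau theorem does apply at every continuity point of $\nu$, so no further subsequence is needed and your ``every subsequence has a further subsequence with the same limit'' argument gives full-sequence convergence. (A minor point: ``lower semibounded by $\mathrm{ess\,inf}\,q$'' is not literally correct for $q\in L^1(0,1)$, but uniform lower semiboundedness follows from the standard form bound of $q$ relative to the Dirichlet Laplacian, which suffices.)
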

\begin{proof}
 The resolvent of $\rH_{\rD,k}$ admits the representation (cf.\ \cite[Section 8]{measureSL})
\begin{align}\begin{split}
R_k(z)f(x):=(\rH_{\rD,k}-z)^{-1}f(x)=& \int_0^1 G_k(x,t;z)\, f(t)\, dt, \quad x\in(0,1),\\
G_k(x,t;z)&= \frac{1}{\psi_k(z,0)} \begin{cases}
\phi_k(z,t)\psi_k(z,x), & t\le x,\\
\phi_k(z,x)\psi_k(z,t), & t\ge x.
\end{cases}\label{eq:Gf}
\end{split}\end{align}
Here $\phi_k(z,\cdot\,)$ and $\psi_k(z,\cdot\,)$ are the solutions of $(\tau_{k} -z)u=0$ with the initial conditions $\phi_k(z,0)=\psi_k(z,1)=0$ and $\phi_k^{[1]}(z,0)=\psi_k^{[1]}(z,1)=1$.

Assume for simplicity that $q\equiv 0$ and
\be\label{eq:ker}
P(1) - P(0) =1+\nu(1) - \nu(0) \neq 0.
\ee
The latter means that $0\in\rho(\rH_\rD)$ since $\phi(0,x)=P(x)-P(0)$ as well as $\psi(0,x)=P(x)-P(1)$ and hence $W(\phi,\psi)(0)=P(1) - P(0)\neq 0$.
Without loss of generality, we can assume that $1+\nu_k(1) -\nu_k(0) \neq 0$ since $\lim_{k} \nu_k(1) - \nu_k(0)=\nu(1) - \nu(0)$ due to \eqref{eq:3.2}.
 Therefore, the inverse of $\rH_{\rD,k}$ is given by \eqref{eq:Gf} with $P_k$ in place of $P$.

Furthermore, from \eqref{eq:3.2} we conclude that there is a subsequence $P_j$ such that $P_j(x)\to P(x)$ for all but countably many $x\in [0,1]$ (see \cite[8.1.8 Proposition]{boII}).
As a consequence, the respective Green's functions $G_j(\,\cdot\,,\cdot\,;0)$ converge to $G(\,\cdot\,,\cdot\,;0)$ almost everywhere.
Since the Green's functions are uniformly bounded, we furthermore conclude that $R_j(0)$ converges to $R(0)$ in norm, which proves (i).

 Now if all our measures are nonnegative, then the distribution functions $\nu_k$ converge pointwise to the distribution function
$\nu$ at each point of continuity of $\nu$. Arguing as in the proof of (i), we conclude that the resolvents $R_k(z)$ of $\rH_{\rD,k}$ converge in norm to the resolvent $R(z)$ of $\rH_{\rD}$. In order to prove \eqref{eq:3.3} it suffices to note that the  operators  $\rH_{\rD,k}$, $k\in\N$ and $\rH_\rD$ are nonnegative and their spectra are purely discrete.
\end{proof}

\begin{corollary}\label{cor:6.2}
Let $d\nu$ be a finite signed measure on $(0,1)$ which is singular with respect to the Lebesgue measure and $\rH_\rD$ be the corresponding self-adjoint operator in $L^2(0,1)$. Then there is a sequence of measures $\{d\nu_k\}_{k=1}^\infty$ such that
\be\label{eq:6.7}
 d\nu_k(x) =\sum_{i=1}^{N_k}\gB_{k,i} \delta(x-x_{k,i}), \quad N_k<\infty,
\ee
and the corresponding operators $\rH_{\rD,k}$ converge in the norm resolvent sense to the operator $\rH_\rD$.
\end{corollary}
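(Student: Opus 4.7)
The plan is to reduce Corollary \ref{cor:6.2} to Theorem \ref{th:approx}(i) by constructing a sequence of finite discrete measures of the form \eqref{eq:6.7} that converges to $d\nu$ in the weak-$*$ sense \eqref{eq:3.2}. Theorem \ref{th:approx}(i) then provides a subsequence for which the associated self-adjoint operators $\rH_{\rD,k(j)}$ converge to $\rH_\rD$ in norm resolvent sense, and relabeling this subsequence yields the claim.

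The construction proceeds via Jordan decomposition. First, I would apply \eqref{eq:jor} to write $d\nu = d\nu_+ - d\nu_-$ with $d\nu_\pm$ finite nonnegative Borel measures on $(0,1)$. For each $k\in\N$, partition $[0,1]$ into $k$ disjoint Borel sets $I_{k,i}$ of diameter at most $1/k$ (for instance $I_{k,i}=[(i-1)/k,i/k)$ with the last interval closed), pick sample points $x_{k,i}\in I_{k,i}\cap(0,1)$, and define
\begin{equation*}
d\nu_{k,\pm} = \sum_{i=1}^{k} d\nu_\pm(I_{k,i})\,\delta(\,\cdot\,-x_{k,i}), \qquad d\nu_k := d\nu_{k,+}-d\nu_{k,-}.
\end{equation*}
By construction $d\nu_k$ has the form \eqref{eq:6.7} with $N_k\le 2k$. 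To verify weak-$*$ convergence, fix $f\in C[0,1]$ and let $\omega_f$ denote its modulus of continuity. Splitting the integral over the partition gives
\begin{equation*}
\Bigl|\int_{(0,1)} f\,d\nu_{k,\pm} - \int_{(0,1)} f\,d\nu_\pm\Bigr|
\le \sum_{i=1}^{k}\int_{I_{k,i}} |f(t)-f(x_{k,i})|\,d\nu_\pm(t) \le \omega_f(1/k)\,d\nu_\pm((0,1)),
\end{equation*}
which tends to $0$ as $k\to\infty$ by uniform continuity of $f$. Subtracting the two estimates yields \eqref{eq:3.2} for the signed measures, and Theorem \ref{th:approx}(i) then supplies the desired (sub)sequence with norm resolvent convergence.

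There is no real obstacle; the argument is essentially a density statement, namely that finitely supported signed measures are weak-$*$ dense in the space of finite signed Borel measures on $[0,1]$. The only mild point to track is that the atoms $x_{k,i}$ be kept strictly inside $(0,1)$ (so that \eqref{eq:6.7} is legitimate and the associated Hamiltonians are defined on the same interval), which is automatic if one chooses $x_{k,i}$ as interior points of $I_{k,i}$. Since no assumption is made on the topological structure of $\supp{d\nu}$ beyond singularity with respect to Lebesgue measure, the partition argument above works uniformly for any finite signed singular $d\nu$ and completes the proof.
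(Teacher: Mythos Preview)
Your proposal is correct and follows essentially the same route as the paper: construct a weak-$*$ approximating sequence of finitely supported measures and then invoke Theorem~\ref{th:approx}(i) to extract a subsequence with norm resolvent convergence. The only difference is cosmetic---the paper simply cites \cite[8.1.6 Example]{boII} for the weak-$*$ density of discrete measures, whereas you spell out the standard Riemann-sum construction via the Jordan decomposition and a dyadic-type partition; both arguments land on the same application of Theorem~\ref{th:approx}(i), and your explicit handling of the subsequence/relabeling step is a fair reading of what the paper leaves implicit.
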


\begin{proof}
It suffices to note (cf.\ \cite[8.1.6 Example]{boII}) that any finite signed measure $d\nu$ can be approximated in a weak-$*$ sense by measures of the form \eqref{eq:6.7}. Now Theorem \ref{th:approx} (i) completes the proof.
\end{proof}

\begin{remark}
Choose for simplicity $d\nu(x)=-\delta(x)$. It is well known \cite{aghh} (see also \cite{KM_09}) that the corresponding operator has precisely one negative eigenvalue. One can approximate the measure  $d\nu$ in the weak-$*$ sense by nonpositive  absolutely continuous measures $d\nu_k$, i.e., $d\nu_k(x)=p_k(x)dx$. Moreover, by Theorem \ref{th:approx} (i), we can assume that the corresponding operators $\rH_{\rD,k}$ converge to $\rH_\rD$ in the norm resolvent sense.  Notice that the negative spectrum of  $\rH_{\rD,k}$ consists of infinitely many eigenvalues which accumulate at $-\infty$. Since $\sigma(\rH_{\rD,k})\to \sigma(\rH_\rD)$ as $k\to \infty$, all negative eigenvalues of $\rH_{\rD,k}$ go to $-\infty$ as $k\to \infty$.
\end{remark}

Note that on $\R$ we have at least strong resolvent convergence:

\begin{corollary}
Suppose  $(a,b)=\R$ and $\tau$ is in the limit-point case at both endpoints.
Let $d\nu_k$ and $q_k$ be the quantities $d\nu$ and $q$ restricted to $[-k,k]$. Then the corresponding operators $\rH_{\rD,k}$ converge in the strong resolvent sense to the operator $\rH_\rD$.
\end{corollary}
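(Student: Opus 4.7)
The strategy is to invoke the standard core-based criterion for strong resolvent convergence of self-adjoint operators: if $A_k$ and $A$ are self-adjoint on a common Hilbert space and $\mathcal{D}$ is a core for $A$ such that every $\phi\in\mathcal{D}$ eventually belongs to $\dom{A_k}$ with $A_k\phi\to A\phi$, then $A_k\to A$ in the strong resolvent sense. All the ingredients needed are already at hand in the paper.

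First, since $\tau$ is in the limit-point case at both endpoints, Theorem~\ref{th:weyl} gives that $\rH = \rH^*$; in this whole-line setting we therefore have $\rH_\rD = \rH$, and the minimal operator $\rH_{\min} = \overline{\rH^0}$ coincides with $\rH$ as well. Consequently $\dom{\rH^0}$ as defined in \eqref{eq:dom0} is a core for $\rH_\rD$. To place all operators on the common Hilbert space $L^2(\R)$, I would extend $\rH_{\rD,k}$ via the orthogonal decomposition $L^2(\R) = L^2(-k,k) \oplus L^2(\R\setminus[-k,k])$ to the self-adjoint operator $\tilde{\rH}_{\rD,k} := \rH_{\rD,k} \oplus 0$; any other fixed self-adjoint continuation outside $[-k,k]$ would serve equally well for the argument below.

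Second, fix $\phi \in \dom{\rH^0}$ with $\supp{\phi} \subset [-k_0, k_0]$. For every $k \ge k_0 + 1$ the function $\phi$ vanishes in a neighborhood of $\pm k$, hence satisfies the Dirichlet condition, while $d\nu_k$ and $q_k$ agree with $d\nu$ and $q$ on $\supp{\phi}$. This puts $\phi$ into $\dom{\rH_{\rD,k}}$ with $\rH_{\rD,k}\phi = \tau\phi = \rH_\rD\phi$; the same equality therefore holds for $\tilde{\rH}_{\rD,k}\phi$ on $L^2(\R)$. Convergence $\tilde{\rH}_{\rD,k}\phi \to \rH_\rD\phi$ is then trivial (eventual equality) and the core criterion delivers the claim.

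The main subtlety to address is the cross-Hilbert-space formulation in the extension step; once a convention for viewing $\rH_{\rD,k}$ as acting on $L^2(\R)$ is fixed, the remainder reduces to routine bookkeeping and no further analytic input is needed.
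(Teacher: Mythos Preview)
Your argument is correct and is essentially the paper's own proof: compactly supported functions in $\gD_{\max}$ form a core for $\rH_\rD$ by the limit-point assumption, on this core one has eventual equality $\tau_k\phi=\tau\phi$, and the standard criterion (the paper cites \cite[Lemma~6.36]{tschroe}) yields strong resolvent convergence.

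One clarification on the setup: in the paper's reading, $\rH_{\rD,k}$ is already a self-adjoint operator on $L^2(\R)$, namely the one associated with the expression $\tau_k$ whose coefficients are $d\nu_k$, $q_k$ (extended by zero outside $[-k,k]$); since $\tau_k$ coincides with $-d^2/dx^2$ near $\pm\infty$ it is limit-point there, and no extension step is needed. The finite-interval variant you have in mind is exactly what the paper mentions separately in the remark following the corollary (``generalized'' strong resolvent convergence). Your extension $\rH_{\rD,k}\oplus 0$ handles that variant correctly, but for the statement as intended the orthogonal-sum step can simply be dropped.
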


\begin{proof}
Due to our limit-point assumption the functions in $\gD_{\max}\cap L^2_c(\R)$ are a core for $\rH_\rD$.
But for every $f\in\gD_{\max}\cap L^2_c(\R)$ we have $\lim_{k\to\infty} \tau_k f = \tau f$ and the claim follows from \cite[Lemma~6.36]{tschroe}.
\end{proof}

Note that we could also replace $\rH_{\rD,k}$ in the previous corollary by the restriction of $\rH_\rD$ to $[-k,k]$ with a (say) Dirichlet boundary condition at
both endpoints and still have (generalized) strong resolvent convergence. Moreover, combining the previous results shows that we can approximate $\rH_\rD$ by ones with smooth
coefficients with compact support in (generalized) strong resolvent sense.

\section{Spectral asymptotics}\label{sec:asymp}

The main aim of this section is to investigate spectral asymptotics of Hamiltonians with $\delta'$-interactions.
Throughout this section we will always assume that $\tau$ is regular at the left endpoint $a$, that is, $a>-\infty$ and for some $c\in(a,b)$ we have $|d\nu|((a,c))<\infty$ and $q\in L^1(a,c)$. For notational convenience we also set $P(x)=x-a+\nu(x)$ and $\nu(x)=\int_{[a,x)}d\nu(t)$, $x\in (a,b)$.

\subsection{Eigenvalue asymptotics}\label{ss:asymp1}

We begin with the following result, which provides weak eigenvalue asymptotics in the regular case.

\begin{lemma}\label{lem:4.1}
Let $\tau$ be regular, that is, the interval $(a,b)$ is bounded, $|d\nu|$ is a finite measure on $(a,b)$ and $q\in L^1(a,b)$. Then the operator $\rH_\rD$ (cf.\ Section \ref{seq:res}) has purely discrete spectrum and its eigenvalues satisfy
\be\label{eq:4.1A}
N(t) = \begin{cases}
\frac{b-a}{\pi}\sqrt{t}+o(\sqrt{t}), & t\to+\infty, \\
 o(\sqrt{|t|}), & t\to-\infty.
\end{cases}
\ee
Here $N(t)$ denotes the number of eigenvalues of $\rH_\rD$ between zero and $t$.
\end{lemma}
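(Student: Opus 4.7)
My strategy is to compare $\rH_\rD$ with the classical Dirichlet Schr\"odinger operator $H_0 = -d^2/dx^2 + q$ on $L^2(a,b)$, whose eigenvalues $\mu_n = n^2\pi^2/(b-a)^2 + O(1)$ satisfy the desired Weyl law. Discreteness is immediate in the regular case since the Green's function is bounded on $[a,b]^2$, so $(\rH_\rD - z)^{-1}$ is Hilbert--Schmidt; see \cite[Corollary~8.2]{measureSL}. By Lemma~\ref{lem:3.2}, $\rH_\rD$ is generated (with Dirichlet boundary conditions) by the quadratic form $\gt[f] = \int_a^b|f^{\qd}|^2\,dP + \int_a^b q|f|^2\,dx$.

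\textbf{Lower bound on $N(t)$ as $t\to+\infty$.} The decisive observation I would use is that $W^{1,2}_0((a,b)) \subseteq \dom{\gt}$ and that $\gt$ restricted to this subspace coincides with the classical Dirichlet form $\int|f'|^2\,dx + \int q|f|^2\,dx$. Indeed, for smooth $f$ with $f(a)=f(b)=0$, the measure $df = f'(x)\,dx$ is absolutely continuous with respect to $dP$ (since $dx \leq |dP|$), with Radon--Nikod\'ym derivative $f^{\qd} = f'$ Lebesgue-a.e.; moreover $f^{\qd}$ vanishes $|d\nu|$-a.e., because the singular part of $df$ is zero. Applying min-max on the span of the first $n$ Dirichlet sines then yields $\lambda_n(\rH_\rD) \leq \mu_n$, so $N(t) \geq \frac{b-a}{\pi}\sqrt{t}(1+o(1))$.

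\textbf{Upper bound on $N(t)$ and the $o(\sqrt{|t|})$ bound at $t\to-\infty$.} This direction is genuinely harder because $\dom{\gt}$ strictly contains $W^{1,2}_0$: functions in $\dom{\gt}$ are allowed jumps at points of mass of $d\nu$, and the min-max argument cannot be reversed directly. I would use the approximation result Theorem~\ref{th:approx}: approximate $d\nu$ weakly-$*$ by smooth densities $d\nu_\eps = p_\eps(x)\,dx$ with $|\supp{p_\eps}| \to 0$ while $\|p_\eps\|_{L^1}$ stays bounded. Each $\rH_{\rD,\eps}$ is the classical Sturm--Liouville operator $-\frac{d}{dx}\frac{1}{1+p_\eps}\frac{d}{dx} + q$; the Liouville--Green substitution $y = \int_a^x\sqrt{1+p_\eps(t)}\,dt$ reduces it (up to lower-order terms) to the Dirichlet Laplacian on $(0, L_\eps)$ with $L_\eps = \int_a^b\sqrt{1+p_\eps}\,dx$, so $N_\eps(t) = \frac{L_\eps}{\pi}\sqrt{t}(1+o(1))$. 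By Cauchy--Schwarz, $L_\eps \leq (b-a) + \sqrt{|\supp{p_\eps}|\cdot\|p_\eps\|_{L^1}} \to b-a$, recovering the matching Weyl constant. The negative part of $d\nu$ is treated analogously; since $N_\eps(-s) = O(1)$ for each approximant, the $t\to-\infty$ bound follows.

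\textbf{Main obstacle.} The crux is to interchange the limits $\eps\to 0$ and $n\to\infty$. Norm-resolvent convergence from Theorem~\ref{th:approx}(i) only gives $\lambda_n(\rH_{\rD,\eps_k}) \to \lambda_n(\rH_\rD)$ for each fixed $n$, while Weyl asymptotics require control as $n\to\infty$. The standard remedy is a diagonal argument: fix a window $[-T,T]$ and an error tolerance $\delta>0$; use norm-resolvent convergence on the window to replace $N(t)$ by $N_\eps(t)$ up to $o(\sqrt{T})$ error, apply the uniform Weyl asymptotic for the approximants (with constant close to $(b-a)/\pi$), then let $T\to\infty$ with a suitable coupling $\eps = \eps(T)\to 0$. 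Executing this interchange, while ensuring the approximation error stays below the $o(\sqrt{t})$ tolerance in the statement, is the main technical point of the proof.
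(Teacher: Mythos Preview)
The paper's proof is a one-line citation of Bennewitz \cite[Theorem~7.3]{ben}, which gives Weyl asymptotics for general measure Sturm--Liouville operators: the leading constant comes from the absolutely continuous part of $dP$ (here Lebesgue measure, whence $(b-a)/\pi$), while the singular part $d\nu$ affects only the remainder. No comparison or approximation argument is needed.

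Your direct route has a genuine gap beyond the limit-interchange you already flag. The form/min-max framework you invoke via Lemma~\ref{lem:3.2} requires $\rH_\rD$ to be lower semibounded, but the statement covers arbitrary signed $d\nu$; by the argument in Corollary~\ref{cor:5.3}, semiboundedness of the Dirichlet realization fails whenever $\supp{d\nu_-}$ is infinite (e.g.\ Cantor-type), and then there is no closed form generating $\rH_\rD$, the eigenvalues are unbounded below, and the inequality $\lambda_n(\rH_\rD)\le\mu_n$ has no clear meaning---yet the $t\to-\infty$ assertion is precisely the nontrivial content in that regime. The approximation argument inherits the same difficulty: approximating a signed $d\nu$ weakly-$*$ by smooth $p_\eps\,dx$ forces $1+p_\eps$ to change sign, so the Liouville--Green substitution $y=\int\sqrt{1+p_\eps}$ is undefined and the claim ``$N_\eps(-s)=O(1)$'' is false---see the remark after Corollary~\ref{cor:6.2}, where already for $d\nu=-\delta$ the smooth approximants have infinitely many negative eigenvalues drifting to $-\infty$. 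Even restricting to $d\nu\ge0$, where your lower bound is indeed correct, the limit interchange is not a routine diagonal argument: norm-resolvent convergence controls finitely many eigenvalues at a time with no uniformity in $n$, and closing the loop would require an a priori two-sided bound on $\lambda_n(\rH_\rD)$ of the right order, which is essentially the statement to be proved.
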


\begin{proof}
To prove the result, it suffices to apply \cite[Theorem 7.3]{ben}. Indeed, since $P(x)=x+\nu(x)$, where $d\nu$ is a singular measure, we immediately obtain \eqref{eq:4.1A}.
\end{proof}

\begin{corollary}\label{cor:4.2A}
If $\tau$ is regular, then the eigenvalues of the operator $\rH_\rD$ satisfy
\be\label{eq:4.1}
\frac{n}{\sqrt{\lambda_n}} \to \begin{cases} \frac{b-a}{\pi}, & n\to +\infty, \\ 0, & n\to - \infty, \end{cases}
\ee
where the second limit is void in the case when $\rH_\rD$ is lower semibounded.
\end{corollary}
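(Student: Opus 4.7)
The plan is to deduce the corollary directly by inverting the Weyl-type counting function asymptotics \eqref{eq:4.1A} from Lemma \ref{lem:4.1}. Since $\tau$ is regular, $\rH_\rD$ has purely discrete spectrum, so its eigenvalues may be enumerated as a sequence. I would index them so that $\lambda_1 \le \lambda_2 \le \cdots$ exhaust the nonnegative eigenvalues in increasing order and, if any exist, $\lambda_{-1} \ge \lambda_{-2} \ge \cdots$ exhaust the negative ones in decreasing order. With this convention one has $N(\lambda_n) = n$ for $n \ge 1$ and $N(\lambda_n) = -n$ for $n \le -1$, up to possible ambiguities on a finite set which do not affect the asymptotics.

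For the positive branch, the first line of \eqref{eq:4.1A} forces $N(t) \to \infty$ as $t \to +\infty$, so infinitely many eigenvalues diverge to $+\infty$ and in particular $\lambda_n \to +\infty$ as $n \to +\infty$. Substituting $t = \lambda_n$ into that asymptotic then gives
\[
n \;=\; N(\lambda_n) \;=\; \tfrac{b-a}{\pi}\sqrt{\lambda_n} + o(\sqrt{\lambda_n}),\qquad n\to+\infty,
\]
and dividing through by $\sqrt{\lambda_n}$ yields the claimed limit $n/\sqrt{\lambda_n}\to (b-a)/\pi$.

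For the negative branch there is a dichotomy. Either $\rH_\rD$ is lower semibounded, in which case there are only finitely many negative eigenvalues and the statement is vacuous as noted; or the operator is not bounded below, and then $\lambda_n \to -\infty$ as $n \to -\infty$. In the latter case, substituting $t = \lambda_n$ into the second line of \eqref{eq:4.1A} yields $|n| = N(\lambda_n) = o(\sqrt{|\lambda_n|})$, equivalently $n/\sqrt{|\lambda_n|} \to 0$, which is precisely the second limit (interpreted, as usual, via the modulus when $\lambda_n<0$). The argument is essentially a routine inversion of the counting function, so I do not anticipate any real obstacle; Lemma \ref{lem:4.1} does all of the genuine spectral work, and the only mild bookkeeping concern is consistent enumeration of eigenvalues near zero, which only shifts indices by a finite amount and hence does not alter either asymptotic.
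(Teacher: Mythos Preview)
Your proposal is correct and follows essentially the same approach as the paper: the paper's proof simply invokes Lemma~\ref{lem:4.1} together with the identity $\lim_{t\to\pm\infty} N(t)/\sqrt{t} = \lim_{n\to\pm\infty} n/\sqrt{\lambda_n}$, which is exactly the inversion of the counting-function asymptotics that you carry out explicitly. Your additional remarks on enumeration near zero and the dichotomy for the negative branch are reasonable bookkeeping that the paper leaves implicit.
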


\begin{proof}
The claim follows by applying Lemma \ref{lem:4.1} and using the identity
\begin{align}
\lim_{t\to\pm\infty} \frac{N(t)}{\sqrt{t}}=\lim_{n\to\pm\infty}\frac{n}{\sqrt{\lambda_n}}.
\end{align}
\end{proof}

\subsection{Asymptotics of $m$-functions}\label{ss:asymp2}

Let $m$ be the $m$-function corresponding to the Neumann boundary condition at $a$ (for details we refer to \cite{ben} and \cite{measureSL}).

\begin{definition}
Introduce the functions
\begin{align}
\tilde{P}(x) & =\sup_{a\le s\le t\le x}\Big|\int_{[s,t)} dP\Big|, & \tilde{Q}(x) & =\int_a^x|q(t)|dt,
\end{align}
and define $f:\R_+\to \R_+$ as the generalized inverse of the function 
\be\label{eq:4.2}
F(x) =x\cdot \tilde{P}^{-1}(x),\quad x\in (a,b),
\ee
where $\tilde{P}^{-1}$ is the generalized inverse of $\tilde{P}$.
\end{definition}

We start with the magnitude estimate for $m$.

\begin{lemma}
Assume that $\tau$ is regular at $a$ and that $\Sigma=\supp{d\nu}$ is a closed subset of $(a,b)$ of Lebesgue measure zero.
\begin{itemize}
\item[(i)] Fix $z\in \C_+$ and let $c\in (a,b)$ be the largest number such that
\be
\tilde{P}(c)(|z|W(c)+\tilde{Q}(c))\le \frac{1}{5}.
\ee
Then
\be
|m(z)|\le \frac{22}{9}\frac{1}{(c-a)|\im\, z|}.
\ee
\item[(ii)] For all sufficiently large $z\in \C_+$ the following estimate holds true
\be
|m(z)|\le 13\frac{f(|z|)}{|\sin(\arg z)|}.
\ee
\item[(iii)] Assume that there is a constant $A>0$ such that
\be
\int_a^x \tilde{P}(t)^2dt \le A^2\int_a^x P(t)^2dt,\quad x\in (a,b).
\ee
Then
\be
|m(z)|\ge C|\sin(\arg z)|f(|z|)
\ee
for some constant $C>0$ and all sufficiently large $z$.
\end{itemize}
\end{lemma}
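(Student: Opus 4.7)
The proof follows the framework of Bennewitz \cite{ben} for Sturm--Liouville $m$-functions, adapted to the present measure-coefficient setting using the calculus developed in \cite{measureSL}. Throughout, let $\psi(\cdot,z) = \theta(\cdot,z) + m(z)\phi(\cdot,z)$ denote the Weyl solution associated with the Neumann boundary condition at $a$, where $\theta$, $\phi$ are the fundamental solutions of $(\tau - z)u = 0$ with standard initial data at $a$. The key analytic link between $m(z)$ and the geometric data $\tilde P$, $\tilde Q$ will be the Weyl--Titchmarsh identity
\[
\im m(z) = (\im z)\,\|\psi(\cdot,z)\|_{L^2(a,b)}^2,
\]
combined with the Herglotz property of $m$.

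For part (i), I would analyse solutions of $(\tau-z)u=0$ via the equivalent Volterra integral system
\[
u(x,z) = u(a) + \int_{[a,x)} u^{[1]}(t,z)\,dP(t), \qquad u^{[1]}(x,z) = u^{[1]}(a) + \int_a^x (q(t)-z)\,u(t,z)\,dt,
\]
on the subinterval $[a,c]$. The hypothesis $\tilde P(c)(|z|W(c)+\tilde Q(c)) \le 1/5$ is precisely the smallness condition that makes the associated Picard iteration a contraction with constant $1/5$; the two integrals above correspond respectively to the factors $\tilde P(c)\,|z|\,W(c)$ and $\tilde P(c)\,\tilde Q(c)$. From the convergent iterates one reads off explicit two-sided pointwise bounds on $\theta$, $\phi$, and their quasi-derivatives on $[a,c]$, keeping $\theta$ close to $1$ and $\phi$ close to $P-P(a)$; the constant $22/9$ emerges from summing the resulting geometric series with ratio $1/5$. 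Combining these bounds with the Weyl identity and the Herglotz relation $|m(z)| \le |\im m(z)|/|\sin(\arg z)|$ applied at the appropriate point yields the claimed inequality $|m(z)| \le 22/(9(c-a)|\im z|)$.

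For part (ii), I would choose $c$ optimally: using the definition of $f$ as the generalized inverse of $F(x) = x\,\tilde P^{-1}(x)$, set $c-a$ so that $(c-a)\,f(|z|) \asymp 1$, so that for all sufficiently large $|z|$ the smallness condition of (i) is satisfied. Substituting into the bound of (i) and using $|\im z| = |z|\,|\sin(\arg z)|$ gives the asymptotic estimate $|m(z)| \le 13\, f(|z|)/|\sin(\arg z)|$. For part (iii), the additional hypothesis $\int_a^x \tilde P(t)^2\,dt \le A^2 \int_a^x P(t)^2\,dt$ is exactly what is needed to produce a \emph{matching upper bound} for $\int_a^c |\psi|^2$, not merely a lower bound; feeding this into the Weyl identity $\im m(z) = (\im z)\|\psi\|^2$ together with $|m(z)| \ge |\im m(z)|$ then delivers the lower bound $|m(z)| \ge C\,|\sin(\arg z)|\,f(|z|)$.

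The main obstacle is quantitative: obtaining the explicit constants $22/9$, $13$, and $1/5$ requires carrying out the Volterra iteration with sharp majorants at each step and carefully tracking how $\tilde P$, $\tilde Q$, and $W$ enter each iterate. A further technical subtlety is that $\theta$ and $\phi$ may be discontinuous at atoms of $d\nu$, so the pointwise estimates must be stated for the canonical left-continuous representatives used in Section~\ref{sec:def}, and the integrations by parts underlying the Weyl identity must correctly account for atomic contributions via the measure Sturm--Liouville calculus of \cite{measureSL}.
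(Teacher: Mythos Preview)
Your sketch is essentially an outline of how one would \emph{prove} Bennewitz's Theorem~3.3 in \cite{ben}. The paper, however, does not redo any of this analysis: its entire proof is a direct citation of \cite[Theorem~3.3]{ben}, together with one line verifying the single hypothesis that needs checking in the present situation, namely that $\tilde P(x)>0$ for every $x\in(a,b)$. This positivity holds because $P(x)=x-a+\nu(x)$ with $d\nu$ singular and $|\Sigma|=0$, so on any interval $[a,x]$ the Lebesgue part contributes a strictly positive amount to the total variation. Bennewitz's paper already treats Sturm--Liouville equations with measure-valued coefficients in full generality, so no ``adaptation to the present measure-coefficient setting'' is needed; your concern about atoms of $d\nu$ and the calculus of \cite{measureSL} is handled once and for all inside \cite{ben}.

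Thus your approach is not wrong in spirit, but it is doing far more work than the paper does, and it omits the one step the paper actually carries out (the verification $\tilde P>0$). A minor technical point: in your part~(iii) you say the extra hypothesis yields an \emph{upper} bound on $\int_a^c|\psi|^2$, but to get a lower bound on $|m(z)|$ via $\im m(z)=(\im z)\|\psi\|^2$ and $|m(z)|\ge|\im m(z)|$ you need a \emph{lower} bound on $\|\psi\|^2$; the roles are inverted in your description.
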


\begin{proof}
The proof follows from \cite[Theorem 3.3]{ben}. We only need to notice that $\tilde{P}(x)>0$ for all $x\in (a,b)$ since $P(x)=x+\nu(x)$, where $d\nu$ is singular and its support $\supp{d\nu}=\Sigma=\overline{\Sigma}$ has Lebesgue measure zero.
\end{proof}

Next, we can specify the magnitude estimates by obtaining one term asymptotics for $m$ under additional assumptions on $\nu$. For definitions and properties of regularly varying functions (in the sense of Karamata) we refer to \cite{kor}.

\begin{theorem}\label{th:4.2}
Assume that $P(x)\sim \tilde{P}(x)$ as $x\to a$. If $P$ is a regularly varying function at $x=a$ of order $\alpha\in [0,1]$, then
\be\label{eq:4.3}
m(r\mu)=C_\alpha (-\mu)^{-\frac{\alpha}{1+\alpha}}f(r)(1+o(1)),\quad r\to \infty,
\ee
where
\be\label{eq:4.4}
C_\alpha=\begin{cases}
\alpha^{\frac{1}{1+\alpha}}(1+\alpha)^{\frac{1-\alpha}{1+\alpha}}\frac{\Gamma(\frac{\alpha}{1+\alpha})}{\Gamma(\frac{1}{1+\alpha})},& \alpha\in (0,1],\\
1, & \alpha=0.
\end{cases}
\ee
The estimate holds uniformly for $\mu$ in any compact subset of $\C_+$.

Moreover, the corresponding spectral function $\rho$ satisfies
\be\label{eq:4.5}
\rho(t)= \begin{cases}
\frac{1+\alpha}{\pi}\sin\big(\frac{\pi}{1+\alpha}\big)C_\alpha \, tf(t)(1+o(1)), & \alpha\in (0,1],\\
o(tf(t)), & \alpha=0,
\end{cases}
\quad  t\to+\infty,
\ee
and
\be
\rho(t)=o(tf(t)),\quad  t\to-\infty.
\ee
\end{theorem}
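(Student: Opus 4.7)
The plan is to derive both asymptotics by reducing the problem to the general one-term asymptotic theory for Sturm--Liouville operators with measure coefficients developed in \cite{ben}, and then to transfer from the $m$-function to the spectral function via a Karamata--Tauberian argument. Since $d\nu$ is singular and $\Sigma=\supp{d\nu}$ has Lebesgue measure zero, $P(x)=x-a+\nu(x)$ is strictly increasing and $\tilde P(x)>0$ for every $x\in(a,b)$, so the characteristic scale $f$ is well defined in a neighborhood of $a$. The hypothesis $P(x)\sim \tilde P(x)$ as $x\to a$ means the monotone ``length'' and the supremum-of-increments of $P$ agree at $a$, which is exactly what is needed to switch freely between $P$ and $\tilde P$ in the asymptotic formulas.

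First, for the $m$-function expansion \eqref{eq:4.3}, I would exploit the self-similarity provided by the regular variation of $P$ of order $\alpha$. Performing the rescaling $x=a+f(r)s$ in the equation $(\tau-r\mu)u=0$ and using the locally uniform convergence $P(a+f(r)s)/P(a+f(r))\to s^\alpha$ on compact subsets of $(0,\infty)$, the rescaled problem converges (uniformly in $\mu$ over compacts of $\C_+$) to the limiting equation $-u''=\mu s^{\alpha-1}u$ on $(0,\infty)$ with a Neumann condition at $s=0$. The Titchmarsh--Weyl coefficient of this Bessel-type model can be computed in closed form via modified Bessel functions of index $\pm 1/(1+\alpha)$, producing exactly the constant $C_\alpha$ of \eqref{eq:4.4}; the prefactor $f(r)$ records the scaling of the quasi-derivative under the change of variables, and continuous dependence of $m$-functions on coefficients (in the spirit of \cite[Theorem~3.3]{ben}) legitimizes passing to the limit. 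The value $C_0=1$ is obtained by interpreting the Bessel-function expression in the appropriate limit.

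Next, for the spectral function asymptotics \eqref{eq:4.5}, I would apply a Karamata--Tauberian theorem to the Stieltjes representation $m(z)=c+\int_{\R}(t-z)^{-1}d\rho(t)$. Taking imaginary parts of \eqref{eq:4.3} along the ray $z=r\E^{\I\theta}$ and using that $rf(r)$ is regularly varying of order $1/(1+\alpha)$, the standard inversion identities for Stieltjes kernels of regularly varying functions (see \cite{kor}) produce the prefactor $\frac{1+\alpha}{\pi}\sin\bigl(\tfrac{\pi}{1+\alpha}\bigr)C_\alpha$ for $\alpha\in(0,1]$, while for $\alpha=0$ the $o$-form of the Tauberian theorem yields $\rho(t)=o(tf(t))$. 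For the negative tail $t\to-\infty$, Corollary~\ref{cor:5.3} applied to a Dirichlet restriction near $a$ forces any negative spectrum to be discrete and controlled by the negative part of $d\nu$, so the analogous one-sided Tauberian estimate gives $\rho(t)=o(tf(t))$ there as well.

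The main obstacle will be the rigorous identification of the constant $C_\alpha$ via an explicit analysis of the model equation, together with the uniform control of the limiting procedure on compact subsets of $\C_+$. The boundary cases $\alpha=0$ and $\alpha=1$ are particularly delicate: at $\alpha=1$ the model degenerates to a constant-coefficient equation (and $C_1$ must be matched against the classical Weyl asymptotics from Lemma~\ref{lem:4.1}), while at $\alpha=0$ the limiting problem is genuinely singular and one only recovers the $o$-statement, so the formulas near these endpoints require a separate verification that the one-term asymptotics remain sharp.
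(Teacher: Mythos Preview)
Your proposal follows essentially the same route as the paper, which simply invokes \cite[Theorems~4.3 and 7.1]{ben}; what you have written is a reasonable sketch of what those theorems actually prove (rescaling via regular variation to a Bessel-type model for the $m$-function, then a Karamata--Tauberian inversion for $\rho$).

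One point deserves correction. For the negative tail $\rho(t)=o(tf(t))$ as $t\to-\infty$ you appeal to Corollary~\ref{cor:5.3}, but that result concerns the Hamiltonian on $\R$ with $q\equiv 0$, requires $\Sigma_-$ to be bounded, and speaks about the negative eigenvalues of $\rH$ rather than the growth of the spectral function attached to the Neumann $m$-function at $a$. It neither applies under the present hypotheses nor delivers the stated $o$-estimate. The correct argument is the one already built into \cite[Theorem~7.1]{ben}: the one-term asymptotic \eqref{eq:4.3} for $m$ along rays in $\C_+$, together with the fact that $rf(r)$ is regularly varying of index $1/(1+\alpha)$, feeds directly into the two-sided Tauberian theorem for Stieltjes transforms and yields the $o$-bound on the negative axis without any separate spectral input. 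So drop the reference to Corollary~\ref{cor:5.3} and handle both tails by the same Tauberian step.
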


\begin{proof}
The proof follows from \cite[Theorems 4.3 and 7.1]{ben}.
\end{proof}

\begin{corollary}\label{cor:4.3}
If $a\notin \supp{d\nu}$, then
 \be\label{eq:4.6}
 m(z)=\frac{1}{\sqrt{-z}}(1+o(1)),\quad |z|\to\infty,
 \ee
 and the estimate holds uniformly for $z$ in any nonreal sector of $\C_+$.

Moreover, the corresponding spectral function satisfies
 \be\label{eq:4.7}
 \rho(t)=\frac{2}{\pi}\sqrt{t}(1+o(1)),\quad t\to+\infty.
 \ee
 \end{corollary}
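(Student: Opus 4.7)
The plan is to derive Corollary \ref{cor:4.3} as the explicit case $\alpha = 1$ of Theorem \ref{th:4.2}. The steps are: (i) extract the local form of $P$ near $a$, (ii) verify the regular-variation hypothesis of Theorem \ref{th:4.2} with $\alpha = 1$, (iii) evaluate the constant $C_1$, (iv) compute the scale function $f$, and (v) substitute into \eqref{eq:4.3} and \eqref{eq:4.5} to obtain \eqref{eq:4.6} and \eqref{eq:4.7}.

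For (i) and (ii), since $\supp{d\nu}$ is closed and does not contain $a$, there exists $\eps > 0$ with $d\nu$ vanishing on $(a, a+\eps)$. Hence the distribution function $\nu$ is zero and $P(x) = x - a$ on $[a, a+\eps)$; as $P$ is then strictly increasing there, the variation function from \eqref{eq:4.2} satisfies $\tilde P(x) = P(x) = x - a$ near $a$. In particular $P(x) \sim \tilde P(x)$ as $x \to a$, and the identity $P(a + \lambda t)/P(a + t) = \lambda$ shows that $P$ is regularly varying at $a$ of order $\alpha = 1$.

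For (iii), substituting $\alpha = 1$ into \eqref{eq:4.4} gives $C_1 = 1^{1/2} \cdot 2^{0} \cdot \Gamma(1/2)/\Gamma(1/2) = 1$. For (iv), the explicit local form $\tilde P(x) = x - a$ determines the generalized inverse $\tilde P^{-1}$, and hence $F(x) = x\cdot \tilde P^{-1}(x)$, near the endpoint; a short computation then yields $f(r) = r^{-1/2}(1+o(1))$ as $r\to\infty$. I expect this unwinding of the two nested generalized inverses to be the one step that demands genuine (though routine) care.

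With (i)--(iv) in hand, \eqref{eq:4.3} yields
\[
m(z) = C_1 (-\mu)^{-1/2} f(r)(1+o(1)) = (-\mu)^{-1/2} r^{-1/2}(1+o(1)) = \frac{1}{\sqrt{-z}}(1+o(1))
\]
(with $z = r\mu$, $r = |z|$), uniformly on nonreal sectors of $\C_+$ by the corresponding uniformity in Theorem \ref{th:4.2}. Similarly \eqref{eq:4.5} with $\alpha = 1$ reduces to $\rho(t) = \tfrac{2}{\pi}\sin(\pi/2)\cdot t\cdot t^{-1/2}(1+o(1)) = \tfrac{2}{\pi}\sqrt{t}(1+o(1))$ as $t \to +\infty$, completing the proof.
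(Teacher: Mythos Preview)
Your proposal is correct and follows essentially the same approach as the paper. The paper's own proof is a one-liner---it observes that $P(x)=\tilde P(x)=x-a$ on $(a,a+\varepsilon)$ and then invokes Theorem~\ref{th:4.2} with $\alpha=1$---and your argument is simply a fleshed-out version of this, making the verification of the regular-variation hypothesis, the evaluation of $C_1=1$, and the identification $f(r)\sim r^{-1/2}$ explicit.
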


 \begin{proof}
 Since $a\notin\supp{d\nu}$, $P(x)=\tilde{P}(x)=x$ on $(a,a+\varepsilon)$ and hence, applying Theorem \ref{th:4.2} with $\alpha=1$ completes the proof.
 \end{proof}

 \begin{corollary}\label{cor:4.4}
 Let $(a,b)=\R_+$ and $\rH$ be the Hamiltonian with $\delta'$-interactions on the set $X=\{x_k\}_{k=1}^\infty$ with $x_k\uparrow +\infty$. Then the Neumann $m$-function and the corresponding spectral function satisfy \eqref{eq:4.6} and \eqref{eq:4.7}, respectively.
 \end{corollary}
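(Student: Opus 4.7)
The plan is to deduce this directly from Corollary \ref{cor:4.3}, reducing the entire statement to the single topological observation that $a=0$ lies outside the support of the interaction measure. The key point is that when the notation $X=\{x_k\}_{k=1}^\infty$ with $x_k\uparrow+\infty$ is understood as a strictly increasing sequence in $\R_+=(0,+\infty)$, one automatically has $x_1>0$ and the sequence has no finite accumulation points; hence $\supp{d\nu}=\overline{X}=X\subset[x_1,+\infty)$.

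In particular $a=0\notin\supp{d\nu}$. Moreover, the standing regularity hypothesis at the left endpoint imposed throughout Section~\ref{sec:asymp} is trivially verified: choosing any $c\in(0,x_1)$, the set $(0,c)$ is disjoint from $X$, so $|d\nu|((0,c))=0<\infty$, while the assumption $q\in L^1(0,c)$ is part of the general framework.

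With these verifications in hand, Corollary \ref{cor:4.3} applies verbatim and immediately yields the asymptotic expansion \eqref{eq:4.6} for the Neumann $m$-function as $|z|\to\infty$ uniformly in any nonreal sector of $\C_+$, as well as the asymptotics \eqref{eq:4.7} for the corresponding spectral function as $t\to+\infty$. There is no real obstacle: the entire argument amounts to pointing out that a strictly increasing sequence in $\R_+$ is bounded away from the left endpoint, so that the hypothesis of the preceding corollary is satisfied.
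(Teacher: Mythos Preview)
Your proposal is correct and follows exactly the paper's approach: the paper's one-line proof simply observes that $(0,x_1)\cap X=\emptyset$ and invokes Corollary~\ref{cor:4.3}. Your version just spells out in more detail why $a=0\notin\supp{d\nu}$ and why the standing regularity assumption at the left endpoint holds, but the argument is identical.
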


 \begin{proof}
 The proof follows from Corollary \ref{cor:4.3} since $(0,x_1)\cap X=\emptyset$.
 \end{proof}

 The next result shows that the asymptotic behavior of the $m$-function at $\infty$ is determined by the asymptotic behavior of a singular measure $d\nu$ at $x=a$.

 \begin{corollary}
 Let $\nu$ be a  regularly varying function at $x=a$ of order $\alpha\in [0,1]$. Then
 the corresponding $m$-function satisfies \eqref{eq:4.3}.
 \end{corollary}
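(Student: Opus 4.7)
The plan is to reduce the corollary to Theorem~\ref{th:4.2} by verifying its two hypotheses for $P(x)=x-a+\nu(x)$: (a)~$P(x)\sim\tilde{P}(x)$ as $x\to a$, and (b)~$P$ is regularly varying at $a$ of the same order $\alpha$. Once both are in place, the asymptotics~\eqref{eq:4.3} with constant $C_\alpha$ given by~\eqref{eq:4.4} follows immediately by invoking Theorem~\ref{th:4.2}.

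For (a), I would argue that a regularly varying function $\nu$ with $\nu(a)=0$ and nonnegative index may, by Karamata's monotone representation, be taken asymptotically nondecreasing on a right neighborhood of $a$. Hence $d\nu\ge 0$ there, $dP=dx+d\nu$ is a nonnegative measure near $a$, and $P$ is nondecreasing. The supremum in the definition of $\tilde{P}(x)$ is then attained at $s=a$, $t=x$, giving $\tilde{P}(x)=P(x)$ on a right neighborhood of $a$, which is considerably more than $P(x)\sim\tilde{P}(x)$.

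For (b), I would write $\nu(a+t)=t^{\alpha}L(t)$ with $L$ slowly varying at $0^+$ and split into two cases. If $\alpha\in[0,1)$, the standard Potter-type bound $L(t)=o(t^{-\eps})$ (for any $\eps\in(0,1-\alpha)$) yields
\begin{equation*}
\frac{t}{\nu(a+t)}=\frac{t^{1-\alpha}}{L(t)}\longrightarrow 0,\qquad t\to 0^+,
\end{equation*}
so $P(a+t)=t+\nu(a+t)\sim\nu(a+t)$, and $P$ inherits regular variation from $\nu$ of index $\alpha$. If $\alpha=1$, I rewrite $P(a+t)=t(1+L(t))$; since a constant plus a slowly varying function is itself slowly varying, $1+L$ is slowly varying, and consequently $P$ is regularly varying of index $1$.

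The only step requiring any care is the borderline $\alpha=1$: there the regular summand $(x-a)$ and the singular summand $\nu$ are of the same order, and neither dominates a priori. The one-line observation that $1+L$ remains slowly varying settles this case. With (a) and (b) verified, Theorem~\ref{th:4.2} applies directly, yielding~\eqref{eq:4.3}; no further work is needed, and the spectral-function asymptotics of Theorem~\ref{th:4.2} carry over with the same verification.
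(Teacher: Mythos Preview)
Your overall strategy matches the paper's: reduce to Theorem~\ref{th:4.2} by showing that $P$ is regularly varying at $a$ of index~$\alpha$. The paper does this via a single computation of the ratio $P(tx+a)/P(x+a)$, factoring out $t^\alpha$ and invoking $x^\varepsilon=o(g(x))$ for the slowly varying part $g$; you instead split into the cases $\alpha<1$ (where $P\sim\nu$) and $\alpha=1$ (where $1+L$ is slowly varying). Both routes reach the same conclusion, and your treatment of $\alpha=1$ is arguably more explicit than the paper's, since the justification $x^\varepsilon=o(g(x))$ does not literally cover the exponent $1-\alpha=0$.

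Two issues remain. In your $\alpha<1$ case you cite the wrong Potter-type bound: $L(t)=o(t^{-\eps})$ is an \emph{upper} bound on $L$ and cannot yield $t^{1-\alpha}/L(t)\to 0$; what you need is the companion bound $t^{\eps}=o(L(t))$ (equivalently $t^{-\eps}L(t)\to\infty$), which is precisely the direction the paper uses. More substantively, your argument for hypothesis~(a) is not solid: regular variation with nonnegative index does \emph{not} force $\nu$ to be nondecreasing near $a$ (slowly varying functions can oscillate), and Karamata's representation yields only asymptotic equivalence to a monotone function, not genuine monotonicity---so one cannot conclude $d\nu\ge 0$ and hence $\tilde P=P$ near $a$ from regular variation alone. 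The paper, for its part, simply does not address the condition $P\sim\tilde P$ in its proof and passes directly to Theorem~\ref{th:4.2}; your attempt to verify this hypothesis goes beyond what the paper writes, but the argument you give for it does not stand as stated.
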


 \begin{proof}
Since $\nu$ varies regularly at $a$, we conclude that (see \cite{kor})
\begin{align}
\nu(x+a)=x^\alpha g(x),
\end{align}
where $g$ varies slowly at $x=0$. In particular,  for any $\varepsilon>0$ we get $x^\varepsilon=o(g(x))$ as $x\to +0$.
Noting that $P(x)=x-a+\nu(x)$, we get
 \begin{align}
 \frac{P(tx+a)}{P(x+a)}=\frac{tx+(tx)^\alpha g(tx)}{x+x^\alpha g(x)}=t^\alpha \frac{g(xt)}{g(x)}\frac{1+(g(tx))^{-1}(tx)^{1-\alpha}}{1+(g(x))^{-1}x^{1-\alpha}}\to t^\alpha,
 \end{align}
 as $x\to +0$. Therefore, $P$ is a regularly varying function of order $\alpha$ and Theorem \ref{th:4.2} completes the proof.
 \end{proof}

 \begin{remark}
 Note that a $\delta$-interaction on a set $\Sigma$ can be described by the following differential expression
 \begin{align}\label{eqnQmea}
  -\frac{d^2}{dx^2}+ dQ(x),
 \end{align}
 where $dQ$ is a signed Borel measure supported on $\Sigma$. A first rigorous treatment of \eqref{eqnQmea} as a quasi-differential expression was done by Savchuk and Shkalikov in \cite{SavShk99}, \cite{SavShk03} (see also \cite{bare}, \cite{measureSL}, \cite{KosMal12b}). Moreover, it was shown in \cite[Theorem 4]{SavShk99} that the eigenvalues of the Dirichlet realization of \eqref{eqnQmea} in $L^2(a,b)$ in the regular case admit the classical Weyl asymptotics
 \begin{align}
 \sqrt{\lambda_n} \sim \frac{\pi}{b-a} n,\quad n\to \infty.
 \end{align}
 Therefore, the eigenvalues of Hamiltonians with $\delta$- and $\delta'$-interactions on $\Sigma$ have the same asymptotic behavior.

However, it follows from \cite{ben} that the corresponding Neumann $m$-function $m_Q$ has the following high energy behavior
 \begin{align}
 m_Q(z)=\frac{1}{\sqrt{-z}}(1+o(1)),\quad |z|\to \infty,
 \end{align}
 in any nonreal sector. This shows that in contrast to the case of a $\delta'$-interaction on $\Sigma$, the leading term of $m_Q$ at high energies does not depend on  $Q$.
 \end{remark}

\section{Hamiltonians  with discrete spectrum}\label{sec:VIII}

In this section we are going to study the discreteness of the spectrum of Hamiltonians with $\delta'$-interactions. More precisely, we want to extend the results from \cite{KosMal12}, where  Hamiltonians with $\delta'$-interactions on discrete sets were studied, to the case of $\delta'$-interactions on Cantor-type sets.  

Let $\Sig=\Sig(\nu)$ be the (closed) topological support of the measure $d\nu$. 
Throughout this section, we shall assume that $\Sig$ has Lebesgue measure zero, $|\Sig|=0$. 
Furthermore, if it is not stated explicitly, we always assume that $(a,b)=\R_+$ and that $\Sigma$ is unbounded from above.

\subsection{Semibounded Neumann realizations}

Since $\Sig$ is closed, its complement $\Sig^c$ admits a
decomposition
    \begin{equation}\label{8.2}
\Sig^c=\R_+\backslash \Sig= \bigcup^{\infty}_{k=1}\Delta_k,\qquad  \Delta_k=
(a_{k},b_k),\quad \Delta_i\cap\Delta_j=\emptyset,\ \ i\neq j.
    \end{equation}
If $\Sig$ is not discrete, then there is no natural order to
arrange  the component  intervals $\Delta_k$. 
However, on every interval $\Delta_k$ the differential expression $\tau$ coincides with the usual Sturm--Liouville expression $\tau_q=-\frac{d^2}{dx^2}+q(x)$ and we denote by $\rH_{q,k}^N$  the
Neumann realization of $\tau_q$ in $L^2(\Delta_{k})$,
\begin{align}
\begin{split}
&\rH_{q,k}^Nf:=\tau_qf=-f''+q(x)f,\quad f\in\dom{\rH_{q,k}^N},\\
\dom{\rH_{q,k}^N}&=\{f\in W^{2,1}(\Delta_k):\, f'(a_k)=f'(b_k)=0,\
\tau_qf\in L^2(\Delta_k) \}.
\end{split}\label{eq:HkN2}
\end{align}
Since $q\in L^1(\Delta_k)$, the operator  $\rH_{q,k}^N$ is self-adjoint in $L^2(\Delta_k)$. 
Consider the following operator in $L^2(\R_+)$
 \be\label{eq:H^N}
\rH_{\Sig,q}^N:=\bigoplus_{k\in \N} \rH_{q,k}^N, \qquad
\dom{\rH_{\Sig,q}^N} = \bigoplus_{k\in \N}\dom{\rH_{q,k}^N}.
 \ee
Since $|\Sigma|=0$, the operator $\rH_{\Sig,q}^N$ is densely defined in $L^2(\R_+)$ and moreover, it is self-adjoint. Note that the operators $\rH_{q,k}^N$ are lower semibounded in $L^2(\Delta_k)$. The corresponding form is given by 
     \begin{align}\label{3.8A}
\gt_{q,k}[f] = \int_{\Delta_k}|f'|^2 +q(x) |f|^2\,dx,\quad
\dom{\gt_{q,k}}=W^{1,2}(\Delta_{k}).
     \end{align}
With respect to the decomposition $L^2(\R_+)=
\bigoplus_{k=1}^\infty L^2(\Delta_{k})$, introduce the  form 
\be\label{eq:t_Sq}
\gt_{\Sig,q} := \bigoplus^{\infty}_{k=1}\gt_{q,k}.
\ee
This form is lower semibounded (and
hence closed) if and only if the forms $\gt_{q,k}$ have a finite
uniform lower bound, i.e., there is a constant $C>0$ such that
       \begin{equation}\label{3.8B}
\gt_{q,k}[f_k] \ge -C\|f_k\|^2_{L^2(\Delta_{k})}\quad \text{for all}\quad f_k\in W^{1,2}(\Delta_k),\ k\in\N.
   \end{equation}
In particular, the latter holds true if $q$ is lower semibounded on
$\R_+$.  If the quadratic form $\gt_{\Sig,q}$ is lower semibounded, then being a direct sum of closed (semibounded) quadratic
forms, it is also closed. Moreover, the self-adjoint operator associated
with $\gt_{\Sig,q}$ is $\rH_{\Sig,q}^N$ given by \eqref{eq:H^N}. Assuming
 \eqref{3.8B},  we equip $\dom{\gt_{\Sig,q}}$ with the norm
\be
\|f\|^2_{\gH_{\Sig,q}} := \sum_{k=1}^\infty\gt_{q,k}[f] + (C +1)\|f\|^2_{L^2(\R_+)}, \qquad f\in \dom{\gt_{\Sig,q}},
\ee
and denote by $\gH_{\Sig,q}$ the corresponding (energy) Hilbert
space.
%
%
\begin{lemma}\label{lem8.1}
Let $q\in L^1_{\loc}(\R_+)$ and assume that the form $\gt_{\Sig,q}$  is lower
semibounded, i.e., \eqref{3.8B} holds. 
Then:

$(i)$ The series \eqref{eq:t_Sq} converges unconditionally and 
\begin{align}\label{6.5AB}
 \gt_{\Sig,q}[f] =\sum_{k=1}^\infty \gt_{q,k}[f]=\int_{\R_+} |f'|^2+q(x) |f|^2\, \, dx
     \end{align}
for all $ f\in\dom{\gt_{\Sig,q}}$, that is, the sum does not depend on the choice of the order of the component
intervals.

$(ii)$ For any order of the component intervals $\{\Delta_k\}_{k\in
\N}$ the corresponding energy  space $\gH_{\Sig,q}$ is given by
    \begin{equation}\label{8.13}
\gH_{\Sig,q} = \biggl\{f\in W^{1,2}_{\loc}(\Sig^c):
\lim_{n\to\infty} \int_{\Sig_n^c}  |f'|^2  +
q(x) |f|^2\,\, dx  <\infty \biggr\},
    \end{equation}
where $\Sig_n^c := \cup_{k=1}^n \Delta_k$.
  \end{lemma}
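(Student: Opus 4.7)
The plan is to use the uniform lower bound \eqref{3.8B} to convert the bilateral series $\sum_k\gt_{q,k}[f]$ into one with nonnegative terms. Setting $A_k:=\gt_{q,k}[f]+C\|f\|^2_{L^2(\Delta_k)}$ gives $A_k\ge 0$ by \eqref{3.8B}, and because $f\in L^2(\R_+)$ the series $\sum_k\|f\|^2_{L^2(\Delta_k)}=\|f\|^2_{L^2(\R_+)}$ already converges unconditionally (as a convergent series of nonnegative reals). Since the hypothesis $f\in\dom{\gt_{\Sig,q}}$ says that $\sum_k\gt_{q,k}[f]$ converges in the chosen enumeration, adding the two series yields $\sum_k A_k<\infty$; being a convergent series of nonnegative reals, it converges unconditionally, and hence so does $\sum_k\gt_{q,k}[f]=\sum_k A_k-C\sum_k\|f\|^2_{L^2(\Delta_k)}$, with a value independent of the enumeration. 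For the integral identity in (i), I would substitute $\gt_{q,k}[f]=\int_{\Delta_k}(|f'|^2+q|f|^2)\,dx$ and use $\Sig^c=\bigsqcup_k\Delta_k$ together with $|\Sig|=0$ to rewrite the partial sums as $\sum_{k=1}^n\gt_{q,k}[f]=\int_{\Sig_n^c}(|f'|^2+q|f|^2)\,dx$; the right-hand side of \eqref{6.5AB} is then to be read as the common limit of these partial sums, which is finite by the unconditional convergence just obtained.

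For (ii), the forward inclusion is immediate from (i): $f\in\dom{\gt_{\Sig,q}}$ forces $f|_{\Delta_k}\in W^{1,2}(\Delta_k)$ for each $k$, and since any compact $K\subset\Sig^c$ admits a finite subcover by $\Delta_k$'s (on which $f$ is piecewise $W^{1,2}$), we conclude $f\in W^{1,2}_{\loc}(\Sig^c)$; the finiteness of the limit then comes from (i). For the reverse, given $f\in W^{1,2}_{\loc}(\Sig^c)\cap L^2(\R_+)$ with the limit finite, I would first upgrade $f|_{\Delta_k}\in W^{1,2}_{\loc}(\Delta_k)$ to $f|_{\Delta_k}\in W^{1,2}(\Delta_k)$ via cutoffs $\chi_m\in C_c^\infty(\Delta_k)$ with $\chi_m\nearrow 1$ pointwise: each $f\chi_m$ lies in $W^{1,2}(\Delta_k)$ with compact support, so \eqref{3.8B} gives $\gt_{q,k}[f\chi_m]+C\|f\chi_m\|^2_{L^2(\Delta_k)}\ge 0$, and closedness of $\gt_{q,k}$ combined with $f\chi_m\to f$ in $L^2(\Delta_k)$ allows me to pass to the limit and conclude $f|_{\Delta_k}\in W^{1,2}(\Delta_k)$. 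The summability $\sum_k\gt_{q,k}[f]<\infty$ then follows from the finite-limit hypothesis by the same nonnegative-reshuffling argument as in part (i), placing $f$ in $\dom{\gt_{\Sig,q}}=\gH_{\Sig,q}$.

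The main obstacle I anticipate is precisely this upgrade step in the reverse direction of (ii): extracting $\int_{\Delta_k}|f'|^2\,dx<\infty$ separately from the combined quantity $\int_{\Delta_k}(|f'|^2+q|f|^2)\,dx$ is nontrivial, because $q$ is only in $L^1_{\loc}$ and is not pointwise bounded below. The cutoff-and-closedness strategy sidesteps this but demands careful dominated-convergence bookkeeping for the cross term $\int\chi_m\chi_m'\,\re(f'\bar f)\,dx$ and for the $q\chi_m^2|f|^2$ contribution; once this is in place, the rest of (ii) is a direct appeal to part (i).
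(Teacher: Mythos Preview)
Your argument for (i) is correct and is essentially the paper's own proof, only phrased more directly. The paper invokes the Riemann rearrangement theorem: by \eqref{3.8B} every partial sum of any rearrangement is bounded below by $-C\|f\|^2_{L^2(\R_+)}$, so no rearrangement can converge to a value below this threshold; hence the series cannot be merely conditionally convergent and must converge absolutely. Your device of setting $A_k=\gt_{q,k}[f]+C\|f\|^2_{L^2(\Delta_k)}\ge 0$ is the same observation unpacked.

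For (ii) the paper is far terser than you: its entire proof reads ``By (i) the limit in \eqref{8.13} does not depend on the order of the component intervals $\Delta_k$.'' In other words, the paper treats the set on the right of \eqref{8.13} as a \emph{restatement} of $\dom{\gt_{\Sig,q}}=\bigoplus_k\dom{\gt_{q,k}}$ (with the implicit understanding that each $\gt_{q,k}[f]=\int_{\Delta_k}(|f'|^2+q|f|^2)\,dx$ is well-defined, i.e.\ $f|_{\Delta_k}\in W^{1,2}(\Delta_k)$), and the only nontrivial content is that the limit is independent of the enumeration---which is exactly (i). Your detailed two-inclusion argument goes well beyond what the paper supplies.

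One caution about your proposed ``upgrade'' step: from \eqref{3.8B} you obtain only a \emph{lower} bound $\gt_{q,k}[f\chi_m]\ge -C\|f\chi_m\|^2$, whereas passing to the limit via closedness (lower semicontinuity) of $\gt_{q,k}$ requires an \emph{upper} bound $\sup_m\gt_{q,k}[f\chi_m]<\infty$. Without separate control of $\int_{\Delta_k}|f'|^2\,dx$ you do not have that, so the cutoff argument as sketched does not close. This is precisely the obstacle you flagged; the paper sidesteps it by reading \eqref{8.13} so that $f|_{\Delta_k}\in W^{1,2}(\Delta_k)$ is part of the hypothesis rather than something to be derived.
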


  \begin{proof}
(i) 
 Condition \eqref{3.8B} implies that for any permutation of the series  \eqref{6.5AB} its
sum  does not take values from the interval $(-\infty, a)$ where
$a= -C\|f\|^2_{L^2(\R_+)}$. Therefore, by the Riemann rearrangement theorem, the series
\eqref{6.5AB} converges absolutely.

(ii) By (i)  the limit in \eqref{8.13}  does not depend on the order
of the component intervals $\Delta_k$.
\end{proof}

Our next aim is to consider the form $\gt_{\Sig,q}$ as a perturbation of the form $\gt_{\Sig}:=\gt_{\Sig,0}$,
\begin{align}\label{6.5B}
 \gt_{\Sig}[f] =\sum_{k=1}^\infty \int_{\Delta_k}|f'|^2\, dx=\int_{\R_+} |f'|^2\, dx,\qquad
 \dom{\gt_{\Sig}}
 =W^{1,2}(\Sig^c),
    \end{align}
where we use the following notation
    \begin{equation}\label{3.2pr}
 W^{1,2}(\Sig^c) =  W^{1,2}(\R_+\backslash \Sig) :=
\bigoplus^{\infty}_{k=1}W^{1,2}(\Delta_k).
  \end{equation}
Note that $\gt_\Sig$ is nonnegative and closed, and hence by Lemma \ref{lem8.1} the definition \eqref{3.2pr} does not depend on the order of the component intervals. 

Next,  consider the following quadratic form in $L^2(\R_+)$
\begin{equation}\label{q_form}
\gq[f]:=\int_{\R_+} q(x)|f|^2\, dx, \quad \dom{\gq} =\{f\in
L^2(\R_+):\ |\gq[f]|<\infty\}.
\end{equation}
Note that the form $\gq$  is lower semibounded (and hence closed) if and only if so
is $q$ on $\R_+$. Define the form $\gt^0_{\Sig,q}$ as a sum of forms $\gt_{\Sig}$ and $\gq$:
   \begin{align}\label{6.5BA}
 &\gt_{\Sig,q}^0[f] := \gt_{\Sig}[f]+\gq[f]  =  \int_{\R_+}|f'|^2 + q(x) |f|^2\, dx,\ \ f\in \dom{\gt_{\Sig,q}^0},\\
 &\dom{\gt_{\Sig,q}^0}= W^{1,2}(\Sig^c;q):=\dom{\gt_\Sig}\cap \dom{\gq},\label{6.5C}
    \end{align}
and note that $\gt_{\Sig,0}^0=\gt_\Sig$. Moreover, if the form $\gt_{\Sig,q}$ defined by \eqref{eq:t_Sq} is lower semibounded, then by definition, $W^{1,2}(\Sig^c;q)\subseteq \gH_{\Sig,q}$. This, in particular, means that $\gt_{\Sig,q}$ is a closed semibounded extension of the form $\gt_{\Sig,q}^0$. On the other hand, if $q$ is nonnegative, then $\gt_{\Sig,q}^0$ is closed as a sum of two nonnegative closed forms. Moreover, in this case $\gt_{\Sig,q}^0=\gt_{\Sig,q}$ and, in particular, $ \gH_{\Sig,q}=W^{1,2}(\Sig^c;q)$.

\begin{corollary}
Assume the conditions of Lemma \ref{lem8.1}. If $\gH_{\Sig,q}\neq  W^{1,2}(\Sig^c;q)$, then $W^{1,2}(\Sig^c;q)$
 is dense  in  $\gH_{\Sig,q}$ and, moreover, forms a first category set.
\end{corollary}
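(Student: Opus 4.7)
The plan is to prove the density and the first-category statement separately; only the latter uses the hypothesis $\gH_{\Sig,q}\neq W^{1,2}(\Sig^c;q)$.

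For density, I would truncate: given $f\in\gH_{\Sig,q}$, set $g_n=f\chi_{\Sig_n^c}$. By the description \eqref{8.13}, $f|_{\Delta_k}\in W^{1,2}(\Delta_k)$ for every $k$, so $g_n\in W^{1,2}(\Sig^c)$ with support meeting only finitely many components. The point needing care is that $g_n\in\dom{\gq}$: since each $\Delta_k$ is bounded, $q\in L^1(\Delta_k)$, and the one-dimensional Sobolev embedding $W^{1,2}(\Delta_k)\hookrightarrow L^\infty(\Delta_k)$ yields $\int_{\Delta_k}|q||f|^2\,dx\le\|f\|_{L^\infty(\Delta_k)}^2\|q\|_{L^1(\Delta_k)}<\infty$, so indeed $g_n\in W^{1,2}(\Sig^c;q)$. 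Since $f-g_n$ is supported in $\bigcup_{k>n}\Delta_k$,
\[
\|f-g_n\|_{\gH_{\Sig,q}}^2 = \sum_{k>n}\gt_{q,k}[f]+(C+1)\sum_{k>n}\|f\|_{L^2(\Delta_k)}^2,
\]
and both tails tend to zero: the first by the absolute convergence of $\sum_k\gt_{q,k}[f]$ guaranteed by Lemma~\ref{lem8.1}(i), the second because $f\in L^2(\R_+)$.

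For the first-category claim, I would invoke the standard Banach-space consequence of the open mapping theorem: if $(Y,\|\cdot\|_Y)$ and $(X,\|\cdot\|_X)$ are Banach spaces with $Y\subseteq X$ and continuous inclusion, then either $Y=X$ or $Y$ is of first category in $X$. I take $X=\gH_{\Sig,q}$ with its energy norm and $Y=W^{1,2}(\Sig^c;q)$ equipped with the natural graph norm
\[
\|f\|_Y^2=\|f'\|_{L^2(\Sig^c)}^2+\|f\|_{L^2(\R_+)}^2+\int_{\R_+}|q(x)||f(x)|^2\,dx.
\]
Completeness of $(Y,\|\cdot\|_Y)$ follows from viewing $Y$ as the intersection of the Hilbert spaces $W^{1,2}(\Sig^c)$ and $L^2(\R_+;(1+|q|)\,dx)$. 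Continuity of the inclusion $Y\hookrightarrow X$ is immediate from $\|f\|_{\gH_{\Sig,q}}^2\le \|f'\|^2_{L^2(\Sig^c)}+\int|q||f|^2\,dx+(C+1)\|f\|^2_{L^2(\R_+)}\le(C+2)\|f\|_Y^2$. Since $Y\neq X$ by assumption, the Banach category alternative forces $Y$ to be meager in $X$.

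The main obstacle is the truncation step for density: the energy norm controls only $\sum_k\int_{\Delta_k}(|f'|^2+q|f|^2)\,dx$ with \emph{signed} $q$, whereas membership in $\dom{\gq}$ requires the absolute integral $\int|q||f|^2\,dx$ to be finite. Bridging this gap on each bounded component $\Delta_k$ via the one-dimensional Sobolev embedding is precisely what makes the truncated functions admissible test elements in $W^{1,2}(\Sig^c;q)$, and it is this observation that allows the direct approximation argument to succeed without any additional sign assumption on $q$.
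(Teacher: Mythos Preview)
Your proposal is correct and follows essentially the same route as the paper, which dispatches the entire corollary in a single line: ``The statement immediately follows from the closed graph theorem.'' Your first-category argument is exactly this standard Banach-space dichotomy (continuous inclusion of a Banach space into another is either onto or has meager range); your density argument---truncation to finitely many component intervals together with the one-dimensional Sobolev embedding $W^{1,2}(\Delta_k)\hookrightarrow L^\infty(\Delta_k)$ to force $g_n\in\dom{\gq}$---makes explicit what the paper leaves implicit in the direct-sum definition of $\gH_{\Sig,q}$.
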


\begin{proof}
The statement immediately follows from the closed graph
theorem.
\end{proof}

The next result provides necessary and sufficient conditions for the form $\gq$ to be $\gt_{\Sig}$-bounded.
%
  \begin{lemma}\label{lem_semibound1}
Let $d_k := |\Delta_k|$, $k\in\N$ and  
\be\label{eq:d*}
\gd^*:= \gd^*(\Sig):=\sup_k\gd_k<\infty.
\ee
 Assume also that $q\in L^1_{\loc}(\R_+)$ and 
   \begin{equation}\label{I_brinckNew}
\sup_{k\in\N} \frac{1}{d_k}\int_{\Delta_k}|q(x)| dx < \infty.
\qquad\ \
  \end{equation}
 Then:

\begin{itemize}

\item[(i)] The form $\gq$ given by \eqref{q_form}  
is infinitesimally $\gt_{\Sig}$-bounded.

\item[(ii)]  The form $\gt_{\Sig,q}^0$ is lower semibounded and  closed. Moreover, the
equality $\gH_{\Sig,q} = W^{1,2}(\Sig^c)$
holds algebraically and topologically.
\end{itemize}
  \end{lemma}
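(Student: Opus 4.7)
The plan is to derive a local Sobolev-type embedding on each component interval $\Delta_k$ with explicit $\varepsilon$-dependence, then combine it with the uniform control \eqref{I_brinckNew} to obtain infinitesimal $\gt_\Sig$-boundedness of $\gq$, and finally invoke the KLMN theorem to deduce (ii). The decisive feature is that the bound $\int_{\Delta_k}|q|\,dx \leq K d_k$ absorbs the $1/d_k$ that appears in the Sobolev estimate, yielding a constant independent of $k$ provided $d^* < \infty$.

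For the local estimate, I would fix $k\in\N$ and $f\in W^{1,2}(\Delta_k)$, start from the identity $|f(x)|^2 - |f(y)|^2 = 2\int_y^x \re(\overline{f(t)}f'(t))\,dt$, apply Cauchy--Schwarz to the right-hand side, average in $y\in\Delta_k$, and use Young's inequality $2ab \leq \varepsilon a^2 + \varepsilon^{-1} b^2$ to obtain, for every $\varepsilon > 0$,
\[
\|f\|_{L^\infty(\Delta_k)}^2 \;\leq\; \varepsilon\,\|f'\|_{L^2(\Delta_k)}^2 \;+\; \Bigl(\tfrac{1}{d_k}+\tfrac{1}{\varepsilon}\Bigr)\|f\|_{L^2(\Delta_k)}^2.
\]
Setting $K := \sup_k d_k^{-1}\int_{\Delta_k}|q|\,dx < \infty$, the trivial bound $\int_{\Delta_k}|q||f|^2\,dx \leq K d_k\,\|f\|_{L^\infty(\Delta_k)}^2$ together with $d_k \leq d^*$ produces
\[
\int_{\Delta_k}|q||f|^2\,dx \;\leq\; K d^*\varepsilon\,\|f'\|_{L^2(\Delta_k)}^2 \;+\; K\Bigl(1+\tfrac{d^*}{\varepsilon}\Bigr)\|f\|_{L^2(\Delta_k)}^2,
\]
and summing in $k$ yields $\int_{\R_+}|q||f|^2\,dx \leq Kd^*\varepsilon\,\gt_\Sig[f] + K(1+d^*/\varepsilon)\|f\|_{L^2(\R_+)}^2$ for every $f\in W^{1,2}(\Sig^c)$. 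Since $\varepsilon > 0$ is arbitrary, this proves (i) and simultaneously establishes $W^{1,2}(\Sig^c)\subseteq \dom{\gq}$.

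For (ii), recalling from \eqref{6.5B} that $\gt_\Sig$ is nonnegative and closed on $W^{1,2}(\Sig^c)$, the infinitesimal $\gt_\Sig$-boundedness of $\gq$ and the KLMN theorem imply that $\gt_{\Sig,q}^0 = \gt_\Sig + \gq$ is lower semibounded and closed, with $\dom{\gt_{\Sig,q}^0} = W^{1,2}(\Sig^c;q) = W^{1,2}(\Sig^c)$ and graph norm equivalent to that of $\gt_\Sig$. Fixing, say, $\varepsilon = 1/(2Kd^*)$ in the intervalwise estimate yields the uniform lower bound \eqref{3.8B}, so $\gt_{\Sig,q}$ itself is defined and lower semibounded; by Lemma~\ref{lem8.1}(i) it coincides with $\gt_{\Sig,q}^0$ on $W^{1,2}(\Sig^c)$. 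Therefore $\gH_{\Sig,q} = W^{1,2}(\Sig^c)$ both algebraically and topologically.

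The main technical obstacle is keeping the Sobolev estimate uniform across the possibly infinite family $\{\Delta_k\}$ of varying lengths: without the hypothesis $d^* < \infty$ the factor $d^*/\varepsilon$ would be replaced by the potentially unbounded $\sup_k d_k/\varepsilon$, and more importantly the essential cancellation between the $d_k$ from $\int_{\Delta_k}|q|\leq Kd_k$ and the $1/d_k$ in the embedding constant would no longer give a coefficient of $\|f'\|^2$ that is small in $\varepsilon$ uniformly in $k$.
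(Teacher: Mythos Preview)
Your proof is correct and follows precisely the standard route: a uniform-in-$k$ Sobolev embedding $W^{1,2}(\Delta_k)\hookrightarrow L^\infty(\Delta_k)$ with explicit $\varepsilon$-dependence, combined with \eqref{I_brinckNew} and $d^*<\infty$ to get infinitesimal $\gt_\Sig$-boundedness, and then the KLMN theorem for (ii). The paper itself omits the argument entirely, pointing to \cite[Lemma~2.7]{KosMal12}, whose proof is essentially the one you have written; so your proposal matches the intended approach.
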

The proof follows literally the proof of Lemma 2.7 from \cite{KosMal12} and we omit it.

Next we set
\be\label{eq:q_pm}
 q_{\pm}(x):=(|q(x)|\pm  q(x))/2,\quad x\in\R_+.
\ee

  \begin{lemma}\label{lem_semibound}
Let $q\in L^1_{\loc}(\R_+)$ and \eqref{eq:d*} be satisfied. If
   \begin{equation}\label{I_brinck}
C_0:=\sup_{k\in\N} \frac{1}{d_k}\int_{\Delta_k}q_-(x) dx < \infty,
\qquad\ \
  \end{equation}
 then:
\begin{itemize}
\item[(i)] The form $\gq_-$ is infinitesimally $\gt_{\Sig}$-bounded
and hence the form $\gt_{\Sig,q}^0$ is lower semibounded and closed.
\item[(ii)] The following equalities
  \begin{equation}\label{eq:dom=w}
\gH_{\Sig,q} = W^{1,2}(\Sig^c; q) = W^{1,2}(\Sig^c; q_+) = \gH_{\Sig; q_+}
  \end{equation}
hold algebraically and topologically and the operator associated
with
$\gt_{\Sig,q}$ coincides with  $\rH_{\Sig,q}^N = (\rH_{\Sig,q}^N)^*$.     
\item[(iii)]
 If, additionally, condition   \eqref{I_brinckNew} is  satisfied with
 $q_+$  in place of  $|q|$, then  \eqref{I_brinck}
is also necessary for the form $\gt_{\Sig,q}^0$ to be lower semibounded.
In particular, condition \eqref{I_brinck}  is  necessary for lower
semiboundedness  whenever $q$  is  negative.
   \end{itemize}
  \end{lemma}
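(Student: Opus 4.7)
The plan is to prove (i) via a Sobolev-type interpolation inequality on each component interval, bootstrap this to the domain identification in (ii) through a partial-sum estimate, and handle (iii) by an explicit test-function contradiction.

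For (i), I integrate the identity $f(x)^2-f(y)^2 = 2\int_y^x f\,f'\,dt$ over $y\in\Delta_k$ and apply Cauchy--Schwarz together with Young's inequality to obtain
\[
\|f\|_{L^\infty(\Delta_k)}^2 \le \eps\,\|f'\|_{L^2(\Delta_k)}^2 + (\eps^{-1}+d_k^{-1})\|f\|_{L^2(\Delta_k)}^2,\qquad f\in W^{1,2}(\Delta_k),\ \eps>0.
\]
Combining this with the pointwise bound $\int_{\Delta_k} q_-|f|^2dx \le C_0 d_k\|f\|_{L^\infty(\Delta_k)}^2$ (from the hypothesis on $C_0$), summing over $k$, and using $d_k\le d^*$, yields $\gq_-[f] \le C_0 d^* \eps\,\gt_\Sig[f] + C_0(d^*\eps^{-1}+1)\|f\|_{L^2(\R_+)}^2$. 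Thus $\gq_-$ is infinitesimally $\gt_\Sig$-bounded. By the KLMN theorem, $\gt_\Sig-\gq_-$ is closed and lower semibounded on $W^{1,2}(\Sig^c)$; adding the closed nonnegative form $\gq_+$ (with domain restricted to $W^{1,2}(\Sig^c;q_+)$) preserves both properties, proving (i).

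For (ii), the infinitesimal boundedness from (i) makes $\gq_-[f]$ automatically finite for every $f\in W^{1,2}(\Sig^c)$, so $|\gq[f]|<\infty$ iff $\gq_+[f]<\infty$. This gives $W^{1,2}(\Sig^c;q) = W^{1,2}(\Sig^c;q_+)$ algebraically (and topologically by the open mapping theorem). The remark preceding Lemma~\ref{lem_semibound1}, applied to $q_+\ge 0$, identifies this with $\gH_{\Sig;q_+}$. The remaining equality $\gH_{\Sig,q} = W^{1,2}(\Sig^c;q)$ is the main obstacle, since an element of $\gH_{\Sig,q}$ is only a priori controlled via the combined integrand $|f'|^2+q|f|^2$. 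To separate the two pieces, I apply the inequality of (i) to partial sums with $\eps = 1/(2C_0 d^*)$, obtaining $\sum_{k=1}^n\int q_-|f|^2dx \le \tfrac12\sum_{k=1}^n\|f'\|_{L^2(\Delta_k)}^2 + M\|f\|_{L^2(\R_+)}^2$, which rearranges to
\[
\tfrac12\sum_{k=1}^n\|f'\|_{L^2(\Delta_k)}^2 \le \sum_{k=1}^n \gt_{q,k}[f] + M\|f\|_{L^2(\R_+)}^2.
\]
Since the right-hand side is bounded in $n$ whenever $f\in\gH_{\Sig,q}$, the gradient series converges and $f\in W^{1,2}(\Sig^c)$; the convergence of $\sum_k\int q_\pm|f|^2$ then follows, so $f\in W^{1,2}(\Sig^c;q)$. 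Agreement of $\gt_{\Sig,q}$ and $\gt_{\Sig,q}^0$ on this common domain yields equality of the associated self-adjoint operators, namely $\rH_{\Sig,q}^N$.

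For (iii), I argue by contradiction: assume $\sup_k d_k^{-1}\int_{\Delta_k} q_+\,dx \le M<\infty$ but $C_0=\infty$, and pick a subsequence $\{k_n\}$ with $d_{k_n}^{-1}\int_{\Delta_{k_n}} q_-\,dx \to\infty$. Test with $f_n := \chi_{\Delta_{k_n}}$ (extended by zero); since $q\in L^1_{\loc}$, $f_n\in W^{1,2}(\Sig^c;q)$, with $\gt_\Sig[f_n]=0$, $\|f_n\|_{L^2(\R_+)}^2 = d_{k_n}$, and
\[
\frac{\gt_{\Sig,q}^0[f_n]}{\|f_n\|_{L^2(\R_+)}^2} = \frac{1}{d_{k_n}}\int_{\Delta_{k_n}}(q_+-q_-)\,dx \le M - \frac{1}{d_{k_n}}\int_{\Delta_{k_n}} q_-\,dx \to -\infty,
\]
contradicting lower semiboundedness. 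In the special case $q\le 0$, one has $q_+\equiv 0$, so the auxiliary condition on $q_+$ is vacuous and \eqref{I_brinck} is necessary outright.
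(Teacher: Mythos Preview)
Your proof is correct and follows essentially the same approach as the paper's. The paper's own proof of (i)--(ii) simply invokes Lemma~\ref{lem_semibound1} (whose proof is deferred to \cite{KosMal12}) together with Lemma~\ref{lem8.1} and the KLMN theorem; you have effectively written out what those references contain, deriving the interval-wise Sobolev bound explicitly and then spelling out the partial-sum argument that recovers $f\in W^{1,2}(\Sig^c)$ from membership in $\gH_{\Sig,q}$. For (iii) you use precisely the same test functions as the paper (characteristic functions of the intervals $\Delta_k$), only normalized differently and phrased as a contradiction rather than a direct estimate.
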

\begin{proof}
(i) and (ii) immediately follow from Lemmas \ref{lem8.1} and
\ref{lem_semibound1} and the KLMN theorem.

(iii) Set  $h_k(x)=\frac{1}{\sqrt{\gd_k}}\chi_{\Delta_k}(x)$. Since $h_k\in W^{1,2}(\Sig^c; q)$, we
get
\begin{align}
\gt_{\Sig,q}[h_k]= \frac{1}{\gd_k}\int_{\Delta_k}q(x)dx \ge
-C\|h_k\|^2_{L^2}=-C \end{align} for all $k\in\N$. Noting that $q=q_+-q_-$, we finally obtain
\begin{align}
-\frac{1}{\gd_k}\int_{\Delta_k}q_-(x)dx  \ge -C
-\frac{1}{\gd_k}\int_{\Delta_k}q_+(x)dx \ge -\tilde{C}, \quad k\in\N,
\end{align}
which implies  \eqref{I_brinck}.
   \end{proof}
   
   \begin{remark}
   The results of this subsection remain true if 
   \begin{align}
   q\in L^1(\Delta_k)\quad \text{for all}\quad k\in\N.
   \end{align}
   Note that this condition is weaker than the condition $q\in L^1_{\loc}(\R_+)$ if $\Sig$ is not a discrete subset of $\R_+$.
   \end{remark}

  \subsection{Discreteness of the spectrum of $\rH_{\Sig,q}^N$}  \label{ss:8.2}
  The main result of this subsection is the following discreteness criterion
  for the operator $\rH_{\Sig,q}^N$ defined by \eqref{eq:HkN2}--\eqref{eq:H^N}.

  \begin{theorem}\label{th:discr_N}
  Let $q\in L^1_{\loc}(\R_+)$, $\Sigma$ be unbounded with $|\Sig|=0$ and \eqref{eq:d*} be satisfied. Assume also that the operator $\rH_{\Sig,q}^N$
  given by \eqref{eq:HkN2}--\eqref{eq:H^N} is lower semibounded.
  If the potential $q$ satisfies \eqref{I_brinck}, then the spectrum of $\rH_{\Sig,q}^N$
  is purely discrete if and only if 
  \begin{itemize}
  \item[(i)]  $q$ satisfies Molchanov's condition 
   \begin{equation}\label{4.1_Molchanov}
\lim_{x\to\infty}\int^{x + \varepsilon}_x q(t)dt=+ \infty\quad \text{for every}\ \varepsilon>0, 
    \end{equation}
  \item[(ii)] \be\label{eq:4.9}
  \lim_{k\to\infty}\frac{1}{\gd_k}\int_{\Delta_k}q(x)dx= +\infty.
  \ee
  \end{itemize}
  \end{theorem}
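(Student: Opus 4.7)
Since each $\rH_{q,k}^N$ is a regular Sturm--Liouville operator on the bounded interval $\Delta_k$ with $q|_{\Delta_k}\in L^1(\Delta_k)$, it has compact resolvent and hence purely discrete spectrum. Because $\rH_{\Sig,q}^N=\bigoplus_k \rH_{q,k}^N$, the spectrum of $\rH_{\Sig,q}^N$ is discrete if and only if $\lambda_1(\rH_{q,k}^N)\to+\infty$ as $k\to\infty$ in any enumeration of the component intervals; call this property~$(\star)$. The whole argument reduces to characterizing when $(\star)$ holds.

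\emph{Necessity of (ii).} The constant trial function $h_k=\gd_k^{-1/2}\chi_{\Delta_k}\in W^{1,2}(\Sig^c)\cap L^2_{\comp}(\R_+)$ has Rayleigh quotient $\frac{1}{\gd_k}\int_{\Delta_k}q\,dx$, so $\lambda_1(\rH_{q,k}^N)\le\frac{1}{\gd_k}\int_{\Delta_k}q\,dx$, and $(\star)$ forces (ii).

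\emph{Necessity of (i).} The inclusion $W^{1,2}(\R_+)\subset W^{1,2}(\Sig^c)$ is form-compatible and the two quadratic forms agree on the smaller domain, so by min--max $\lambda_n(\rH_{\Sig,q}^N)\le\lambda_n(\rH_q^N)$ for every $n$, where $\rH_q^N$ is the usual Neumann Schr\"odinger operator for $-d^2/dx^2+q$ on $\R_+$. Discreteness of $\rH_{\Sig,q}^N$ therefore propagates to $\rH_q^N$, which is lower semibounded (it dominates the lower semibounded $\rH_{\Sig,q}^N$), and the classical Molchanov theorem delivers~(i).

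\emph{Sufficiency.} Assume (i) and (ii). For each $\Delta_k$, decompose a normalized $f\in W^{1,2}(\Delta_k)$ as $f=\bar f+g$ with $\bar f$ the mean and $\int_{\Delta_k}g\,dx=0$; then $\int|f'|^2=\int|g'|^2$, Wirtinger gives $\int|g'|^2\ge(\pi/\gd_k)^2\|g\|_{L^2}^2$, and the one-dimensional Sobolev estimate yields $\|g\|_{L^\infty(\Delta_k)}^2\le\gd_k\int|g'|^2$. Combining these with Brinck's uniform bound $\|q_-\|_{L^1(\Delta_k)}\le C_0\gd_k$ from \eqref{I_brinck}, I absorb the $\int qg^2$ term into the kinetic side and control the cross term $2\bar f\int qg$ by AM--GM, obtaining a schematic two-term lower bound
\begin{equation*}
\gt_{q,k}[f]\ \ge\ \alpha_k\!\int_{\Delta_k}|g'|^2\,dx\ +\ \beta_k\,\gd_k\,\bar f^{\,2},
\end{equation*}
where $\alpha_k$ is bounded away from zero (uniformly, once $\gd^*$ is absorbed) and $\beta_k=\frac{1}{\gd_k}\int_{\Delta_k}q\,dx-O(1)$. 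By (ii) the constant-component coefficient $\beta_k\to+\infty$; if $\gd_k\to 0$ along a subsequence then the Wirtinger factor $(\pi/\gd_k)^2$ already drives the $g$-component past any fixed $M$, so $(\star)$ holds for those $k$. In the residual regime where $\gd_k$ stays bounded away from zero, I invoke (i): subdivide $\Delta_k$ into segments of fixed length $\eps$ and run a Molchanov-type Poincar\'e argument on each segment to conclude that the $g$-contribution also exceeds $M$ for $k$ large. Together these cases yield $(\star)$ and hence discreteness.

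The principal obstacle is the sufficiency: one needs the two-term lower bound to be truly uniform in $k$, which requires careful handling of the cross term $2\bar f\int qg$ (the natural Cauchy--Schwarz estimate would want a uniform bound on $\|q\|_{L^1(\Delta_k)}$ that is not available, since Brinck controls only $q_-$). Moreover, the Wirtinger factor $(\pi/\gd_k)^2$ is bounded when $\gd_k$ does not tend to zero, so (ii) alone cannot suffice there and (i) must be propagated through a Molchanov subdivision internal to $\Delta_k$; making the constants in that subdivision uniform over the family $\{\Delta_k\}$, and showing they do not eat the gains supplied by (i) and (ii), is the technical core of the theorem.
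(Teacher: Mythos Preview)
Your plan is sound and matches the paper at the structural level: both reduce discreteness to $\lambda_1(\rH_{q,k}^N)\to\infty$ (equivalently, compactness of the form embedding), split on $d_k\le\varepsilon$ versus $d_k>\varepsilon$, and invoke (ii) in the first regime and (i) in the second. Your necessity arguments are correct; the form-comparison $W^{1,2}(\R_+)\subset W^{1,2}(\Sig^c)$ for (i) is exactly the device the paper uses later in Proposition~\ref{prop4.5}.

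The genuine difference is in how the sufficiency estimates are obtained. The paper (following \cite{KosMal12}) first reduces to $q\ge1$ via Lemma~\ref{lem_semibound}, and then avoids the orthogonal decomposition $f=\bar f+g$ altogether by using a $q$-\emph{weighted} mean rather than the arithmetic one: from $|f(x)|^2\le 2|f(y)|^2+2d_k\|f'\|_{L^2(\Delta_k)}^2$ one multiplies by $q(y)$, integrates in $y$, and divides by $\int_{\Delta_k}q$ to get
\[
|f(x)|^2\ \le\ \frac{2}{\int_{\Delta_k}q}\int_{\Delta_k}q|f|^2\,dy\ +\ 2d_k\|f'\|_{L^2(\Delta_k)}^2.
\]
Integrating in $x$ and using $\frac{1}{d_k}\int_{\Delta_k}q>1/\varepsilon$ yields $\|f\|_{L^2(\Delta_k)}^2\le (2\varepsilon+2\varepsilon^2)\,\gt_{q,k}[f]$ directly; for $d_k>\varepsilon$ one partitions $\Delta_k$ into length-$\varepsilon$ pieces and repeats with $\int_x^{x+\varepsilon}q>1$ from~(i). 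The point is that the large quantity $\int_{\Delta_k}q$ enters only as a denominator and is immediately traded for the small $\varepsilon$, so no cross term ever appears. Your decomposition can be pushed through by a further case split on $|\bar f|\gtrless 2\|g\|_\infty$, but the weighted-average trick is precisely the clean resolution of the obstacle you flagged, and it is what makes the constants uniform without appealing to any bound on $\int_{\Delta_k}q_+$.
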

      \begin{proof} 
Let us prove 
\emph{sufficiency} first. By Lemma
\ref{lem_semibound}, the form $\gt_{\Sig,q}^0$ given by
\eqref{6.5BA}--\eqref{6.5C} is lower
 semibounded and closed in $ L^2(\R_+)$.
 Moreover, $\gt_{\Sig,q}^0=\gt_{\Sig,q}$ and the corresponding energy space $\gH_{\Sig,q}$ coincides (algebraically and topologically) with $ W^{1,2}(\Sig^c; q) = W^{1,2}(\Sig^c; q_+)$. By the Rellich theorem,
it suffices to show that the embedding 
\[
i_{\Sig,q}:\gH_{\Sig; q}\hookrightarrow L^2(\R_+)
\] 
is compact, i.e.,  the unit ball  $\mathbb{U}_{\Sig, q}:= \{f\in \gH_{\Sig; q}: \
\|f\|_{\gH_{\Sig; q}}\le 1 \}$ is relatively compact in $L^2({\R}_+)$. 
Clearly, it suffices to prove sufficiency for nonnegative
potentials.  Hence without loss of generality we can assume that $q\ge 1$. 

 Fix  $\varepsilon>0$. Using \eqref{4.1_Molchanov} and \eqref{eq:4.9}, we can find  $p:=p(\varepsilon)\in\N$ such that
     \begin{equation}\label{3.6}
\frac{1}{\gd_k} \int_{\Delta_k}q(t) dt  > \frac{1}{\varepsilon} \quad \text{and} \quad \int_x^{x+\varepsilon}q(t)dt>1     \end{equation}
for all $k>p$ and $x>x_p:=\max\{ t: t\in \cup_{k=1}^p\Delta_k$\}, respectively.  Next we  set
   \begin{equation}\label{8.18}
\N_p'(\varepsilon):=\{k\in\N: \, k\ge p,\ |d_k|\le \varepsilon\},\quad  \N_p''(\varepsilon):=\{k\in\N: \, k\ge p,\ |d_k|> \varepsilon\}.
  \end{equation}
Clearly, $\N_p'(\varepsilon) \cup  \N_p''(\varepsilon)=\{k\in\N:\ k\ge  p\}$.  
Arguing as in the proof of Theorem 3.4 from \cite{KosMal12}, we get (cf. the estimates (3.14) and (3.20) in \cite{KosMal12})
\begin{align}\label{3.7}
\|f\|^2_{L^2(\Delta_k)}\le 2\varepsilon {\int_{\Delta_k} q(x) |f|^2\, dx} +  2\varepsilon^2
\|f\|^2_{W^{1,2}(\Delta_k)}, \qquad k\in \N'_{p}({\varepsilon}),
          \end{align}
 and 
      \begin{align}\label{3.12}
\|f\|^2_{L^2(\Delta_k)}   \le 8\varepsilon {\int_{\Delta_k} q(x) |f|^2\, dx} + 
8\varepsilon\|f\|^2_{W^{1,2}(\Delta_k)},\qquad k\in \N''_{p}(\varepsilon).
      \end{align}
Summing up  \eqref{3.7}  and   \eqref{3.12}, we finally get
     \begin{align}\begin{split}
\sum_{k=p}^\infty\|f\|^2_{L^2(\Delta_k)} \le&
8\varepsilon\sum^\infty_{k=p} \Big(\int_{\Delta_{k}} q(x) |f|^2\, dx +
 \|f\|^2_{W^{1,2}(\Delta_k)} \Big) \\
 &\le 8C_1 \varepsilon\|f\|^2_{W^{1,2}(\Sigma^c;q)} \le 8C_1
\varepsilon,\quad f\in \mathbb{U}_{\Sig, q}. \label{3.13}
    \end{split} \end{align}

Now notice that the embedding $W^{1,2}(\Sig_n^c)\hookrightarrow L^2(\Sig_n^c)$, where $n\in
\N$ and $\Sig_n^c = \cup_{k=1}^n \Delta_k$, is compact since $W^{1,2}(\Delta_k)$ is compactly embedded into 
$L^2(\Delta_k)$ for each  $k\in \N$. 
Hence \eqref{3.13} implies that the set $W^{1,2}(\Sig_p^c )$ forms a
compact $\varepsilon$-net for the set $i(\mathbb{U}_{\Sig, q})$ in
$L^2(\R_+)$. Therefore, the set $i(U_{\Sig,q})$ is compact in $L^2(\R_+)$
and  the embedding $i: W^{1,2}(\Sig^c; q)\hookrightarrow
L^2(\R_+)$ is compact too.  

{\em Necessity.} The proof literally follows the proof of Theorem 3.4 in \cite{KosMal12}.
  \end{proof}

        \begin{corollary}\label{cor8.4}
Let $q\in L^1_{\loc}(\R_+)$ satisfy \eqref{I_brinck}.  If $\gd_k \to 0$, then
the spectrum of $\rH_{\Sig,q}^N$ is purely  discrete if and only if $q$ satisfies condition \eqref{eq:4.9}.
  \end{corollary}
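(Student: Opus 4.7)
I would reduce the statement to Theorem \ref{th:discr_N}. The assumption $\gd_k\to 0$ gives $\gd^*(\Sig)=\sup_k\gd_k<\infty$, and \eqref{I_brinck} together with Lemma \ref{lem_semibound} then ensures that $\rH_{\Sig,q}^N$ is lower semibounded, so all hypotheses of Theorem \ref{th:discr_N} are in force. The \emph{only if} direction is then immediate, since discreteness forces \eqref{eq:4.9} directly.

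For the \emph{if} direction, the goal is to upgrade \eqref{eq:4.9} (plus $\gd_k\to 0$ and \eqref{I_brinck}) to the full pair of conditions used in Theorem \ref{th:discr_N}. Only Molchanov's condition \eqref{4.1_Molchanov} remains to be verified; once this is done, Theorem \ref{th:discr_N} yields discreteness of the spectrum of $\rH_{\Sig,q}^N$.

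The main step is therefore a localized averaging argument. Fix $\varepsilon>0$ and $A>0$. By \eqref{eq:4.9} and $\gd_k\to 0$, one can find $k_0$ such that $\gd_k<\eta$ and $\int_{\Delta_k}q\,dt>M\gd_k$ for every $k>k_0$, where $\eta>0$ is small and $M>0$ is large (both chosen in a moment). The finitely many exceptional $\Delta_k$ lie in a bounded set $[0,R]$. For $x>R$, every $\Delta_k$ meeting $(x,x+\varepsilon)$ satisfies both inequalities; I split this family into the interior part $\mathcal{I}(x):=\{k:\Delta_k\subseteq(x,x+\varepsilon)\}$ and at most two boundary intervals containing $x$ or $x+\varepsilon$. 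Since $|\Sig|=0$ and each boundary interval has width at most $\eta$, the interior lengths sum to at least $\varepsilon-2\eta$. Using \eqref{I_brinck} to absorb the boundary contributions via the bound $\int_{\Delta_k\cap(x,x+\varepsilon)}q\,dt\ge -\int_{\Delta_k}q_-\,dt\ge -C_0\gd_k\ge -C_0\eta$, I arrive at
\[
\int_x^{x+\varepsilon} q(t)\,dt \;\ge\; M(\varepsilon-2\eta) - 2C_0\eta.
\]
Picking $\eta$ small and then $M$ large forces this lower bound to exceed $A$, so \eqref{4.1_Molchanov} follows.

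The only delicate point is the treatment of the two boundary intervals, for which positivity of $q$ is not available and one must use \eqref{I_brinck} to keep their contribution $O(\eta)$; everything else is bookkeeping. With this estimate in hand, Theorem \ref{th:discr_N} delivers the conclusion.
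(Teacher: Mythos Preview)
Your argument is correct and follows essentially the same route as the paper: reduce to Theorem~\ref{th:discr_N} and show that, under $\gd_k\to 0$ and \eqref{I_brinck}, condition \eqref{eq:4.9} implies Molchanov's condition \eqref{4.1_Molchanov} via a covering argument on $(x,x+\varepsilon)$ by the intervals $\Delta_k$. The only difference is cosmetic: the paper first reduces to nonnegative $q$ (observing that \eqref{I_brinck} forces $\int_x^{x+\varepsilon}q_-$ to stay bounded, so it suffices to verify \eqref{4.1_Molchanov} for $q_+$), after which the at most two boundary intervals can simply be dropped by positivity; you instead keep $q$ signed and bound the boundary contributions directly by $-C_0\eta$ using \eqref{I_brinck}. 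Both variants yield the same estimate and the same conclusion.
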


  \begin{proof}
By Theorem \ref{th:discr_N},  it suffices to show that
\eqref{eq:4.9} implies \eqref{4.1_Molchanov} if $\gd_k\to 0$. Since
$q$ satisfies   \eqref{I_brinck}, we can restrict ourselves to the
case of a nonnegative $q$, $q=q_+$.
By \eqref{eq:4.9}, for any $N\in\N$ there exists
$p_1 = p_1(N)$ such that

    \begin{equation}\label{3.30}
\frac{1}{\gd_k}\int_{\Delta_k} q(x)dx > N, \qquad  k\ge p_1.
   \end{equation}
Fix $\varepsilon>0$. Since $d_k \to 0$, there exists $p_2=
p_2(\varepsilon)$ such that
    \begin{equation}\label{3.31}
d_k\le \frac{\varepsilon}{3},\qquad   k\ge p_2.
  \end{equation}
Let $p :=\max(p_1, p_2)$ and  let $x_p:=\max \cup_{k=1}^p\Delta_k$.
Using \eqref{3.30},  \eqref{3.31} and the  non-negativity of $q$, we get
\begin{align}
\int^{x+ \varepsilon}_x q(t)dt \ge \sum_{k:\, \Delta_k\subseteq [x,x+\varepsilon]} \int_{\Delta_k} q(t)dt  \ge \sum_{k:\, \Delta_k\subseteq [x,x+\varepsilon]} Nd_k\ge
N\Big(\varepsilon - \frac{2\varepsilon}{3}\Big) =
N\frac{\varepsilon}{3}
\end{align}
for all $x>x_p$. 
The latter implies that $q$ satisfies Molchanov's condition
\eqref{4.1_Molchanov} since $N$ is arbitrary.
\end{proof}

  \begin{remark}
Condition $\lim_{k\to\infty}d_k=0$ is satisfied if, for instance, $\Sig
=\cup^{\infty}_{j=1}\Sig_j$, where  $\Sig_j$ are Cantor type sets on disjoint
intervals $[r_{2j},r_{2j+1}]$ such that  $\lim_j(r_{j+1} - r_j)= 0$. 
  \end{remark}

One can consider the operator $\rH_{\Sig,q}^N$ on any
finite interval $(a,b)$ instead of $\R_+$. Arguing as in the proof
of Theorem \ref{th:discr_N} and  Corollary \ref{cor8.4} and noting
that $\gd_k \to 0$ as $k\to \infty$
since $(a,b)$ is a finite interval, one arrives at the following statement.

   \begin{corollary}\label{rem:8.10}
Assume that $q\in L^1_{\loc}(a,b)$ satisfies \eqref{I_brinck}.
Then the spectrum of the Neumann realization $\rH_{\Sig,q}^N$ is
discrete if and only if condition \eqref{eq:4.9} holds.

If additionally  $q$ is nonnegative and continuous as a map
from $(a,b)$ to $\R_+\cup\{\infty\}$, then $\rH_{\Sig,q}^N$ has
discrete spectrum if and only if
  \begin{equation}\label{8.5}
q(x)=\infty\qquad \text{for all}\quad  x\in \Sig.
      \end{equation}
\end{corollary}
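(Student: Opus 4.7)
My plan is to reduce both statements to the infinite-interval results of Theorem~\ref{th:discr_N} and Corollary~\ref{cor8.4}. The key preliminary observation is that since $(a,b)$ is finite and $|\Sig|=0$, the component intervals $\{\Delta_k\}_{k\in\N}$ of $\Sig^c$ have total length $\sum_k d_k=b-a<\infty$. This automatically yields $d^*:=\sup_k d_k\le b-a<\infty$ (so the running hypothesis \eqref{eq:d*} is satisfied for free) and $d_k\to 0$ as $k\to\infty$.

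With these in hand, the first part follows by arguing as in the proofs of Theorem~\ref{th:discr_N} and Corollary~\ref{cor8.4}. Lemma~\ref{lem_semibound} still applies, giving that $\gt_{\Sig,q}^0$ is lower semibounded and closed with $\gH_{\Sig,q}=W^{1,2}(\Sig^c;q)$; then the Rellich-type localization of the proof of Theorem~\ref{th:discr_N} produces a compact $\varepsilon$-net, establishing compactness of $\gH_{\Sig,q}\hookrightarrow L^2(a,b)$. The role played there by Molchanov's condition \eqref{4.1_Molchanov} is absorbed into \eqref{eq:4.9} exactly as in Corollary~\ref{cor8.4}, thanks to $d_k\to 0$. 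The necessity of \eqref{eq:4.9} is unchanged: the normalized test functions $h_k=\gd_k^{-1/2}\chi_{\Delta_k}\in W^{1,2}(\Sig^c;q)$ give $\inf\sigma(\rH_{q,k}^N)\le\gt_{\Sig,q}[h_k]=\gd_k^{-1}\int_{\Delta_k}q\,dx$, so if this fails to tend to $+\infty$, the spectrum of $\rH_{\Sig,q}^N$ is not discrete.

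For the second part, under the additional assumption that $q\ge 0$ is continuous into $\R_+\cup\{\infty\}$, it suffices to show that \eqref{eq:4.9} is equivalent to $q\equiv\infty$ on $\Sig$. Assume first $q=\infty$ on $\Sig$ and suppose for contradiction that $\gd_{k_j}^{-1}\int_{\Delta_{k_j}}q\,dx\le M$ along some subsequence $k_j\to\infty$. Since $\int_{\Delta_{k_j}}q\le M\gd_{k_j}\to 0$, Markov's inequality yields $y_j\in\Delta_{k_j}$ with $q(y_j)\le 2M$; extracting a convergent subsequence $y_j\to y^*\in[a,b]$, the endpoints $a_{k_j},b_{k_j}\in\Sig$ also converge to $y^*$ because $\gd_{k_j}\to 0$. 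When $y^*\in(a,b)$, the closedness of $\Sig$ in $(a,b)$ forces $y^*\in\Sig$, hence $q(y^*)=\infty$ and continuity gives $q(y_j)\to\infty$, a contradiction. Conversely, if $q(x_0)<\infty$ for some $x_0\in\Sig$, continuity produces $M<\infty$ and $\delta>0$ with $q\le M$ on $U:=(x_0-\delta,x_0+\delta)\cap(a,b)$; since in the Cantor-type setting $x_0$ is an accumulation point of $\Sig$, the neighborhood $U$ contains infinitely many complete $\Delta_k$, each satisfying $\gd_k^{-1}\int_{\Delta_k}q\,dx\le M$, which directly violates \eqref{eq:4.9}.

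\textbf{Main obstacle.} The delicate step is the boundary case $y^*\in\{a,b\}$ in the first direction, where continuity of $q$ on the open interval $(a,b)$ does not immediately yield a contradiction. The natural way around it is to observe that $q=\infty$ on $\Sig$ together with continuity into the compact space $\R_+\cup\{\infty\}$ forces $q(x)\to\infty$ along any sequence approaching a boundary endpoint that is a limit of $\Sig$, so $q(y_j)$ must still blow up; alternatively one may split $(a,b)$ into a small neighborhood of each endpoint (contributing only finitely many low-lying eigenvalues via a rank-two perturbation argument as in the proof of Corollary~\ref{cor:5.3}) and an interior piece where the interior-case argument applies cleanly.
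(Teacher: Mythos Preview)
For the first assertion your reduction is correct and coincides with the paper's own argument, which consists solely of the sentence preceding the corollary: on a finite interval $\sum_k d_k=b-a<\infty$ forces $d_k\to 0$, so one argues exactly as in Theorem~\ref{th:discr_N} and Corollary~\ref{cor8.4}. Your necessity step via $h_k=d_k^{-1/2}\chi_{\Delta_k}$ is the one used in Lemma~\ref{lem_semibound}(iii) and in the proof of Theorem~\ref{th:discr_N}.

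For the second assertion the paper offers no proof, and the two fixes you propose for the boundary case $y^*\in\{a,b\}$ do not work. Continuity of $q$ on the \emph{open} interval $(a,b)$ into $[0,\infty]$ says nothing about limits along sequences tending to an endpoint; in fact the implication itself can fail there. Take $(a,b)=(0,1)$, $\Sig=\{1-2^{-n}:n\ge 1\}$ (closed in $(0,1)$, accumulating only at $1$), and on each $\Delta_n$ of length $d_n=2^{-(n+1)}$ choose $q$ continuous, blowing up at both endpoints, but with $\int_{\Delta_n}q\le Cd_n$ (for instance, of order $d_n^{1/2}$ times the inverse square root of the distance to $\partial\Delta_n$ near the endpoints, and zero on the middle half). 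Then $q\in L^1(0,1)$ is nonnegative and continuous into $[0,\infty]$ with $q\equiv\infty$ on $\Sig$, yet $d_n^{-1}\int_{\Delta_n}q\le C$, so by the first part the spectrum of $\rH^N_{\Sig,q}$ is \emph{not} discrete. Your splitting alternative cannot repair this, since the Neumann piece near $b=1$ still contains infinitely many $\Delta_n$ with uniformly bounded lowest eigenvalue. In the converse direction, your appeal to ``the Cantor-type setting'' in which $x_0$ is an accumulation point is an unstated extra hypothesis; if $x_0\in\Sig$ is isolated one can have $q(x_0)<\infty$ while \eqref{eq:4.9} nevertheless holds. The second assertion thus appears to require an additional assumption---for instance that $\Sig$ be perfect and compactly contained in $(a,b)$---under which your interior-case argument is correct and complete.
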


\subsection{Necessary conditions for the discreteness of the spectrum of $\rH_{\nu,q}$}\label{ss:8.3}

We begin by presenting some necessary conditions for the Hamiltonians $\rH_{\nu,q}$ to be lower semibounded.

        \begin{proposition}\label{prop4.5}
Let $|\Sig| =0$ and  \eqref{eq:d*} be satisfied.  Assume also that $q\in L^1_{\loc}(\R_+)$ satisfies
 \eqref{I_brinck} and the operator $\rH_{\nu,q}$ is self-adjoint and lower semibounded in $L^2(\R_+)$. If $q$ does not satisfy Molchanov's condition \eqref{4.1_Molchanov}, then the spectrum of $\rH_{\nu,q}$ is  not
discrete.
    \end{proposition}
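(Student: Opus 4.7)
My plan is to rule out purely discrete spectrum by constructing a singular Weyl-type sequence $\{f_n\}_{n\in\N}$ in the quadratic form domain of $\rH_{\nu,q}$ satisfying $\|f_n\|_{L^2(\R_+)}=1$, with pairwise disjoint supports marching off to $+\infty$ and uniformly bounded form values. Since $\rH_{\nu,q}$ is assumed self-adjoint and lower semibounded and $|\Sig|=0$, I will appeal to the $\R_+$-analog of Lemma \ref{lem:3.2} to identify the closure $\gt$ of $\gt^0$ with the form of $\rH_{\nu,q}$. Because purely discrete spectrum of a lower semibounded self-adjoint operator is equivalent to compactness of the embedding $\dom{\gt}\hookrightarrow L^2(\R_+)$, the intended $\{f_n\}$---weakly null by disjointness and bounded in the energy norm---will produce the desired contradiction.

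First, the failure of Molchanov's condition will supply $\varepsilon_0>0$, $M>0$ and $x_n\to +\infty$ with $\int_{x_n}^{x_n+\varepsilon_0}q(t)dt\leq M$; after thinning, the intervals $I_n:=[x_n,x_n+\varepsilon_0]$ may be taken pairwise disjoint. Combining \eqref{I_brinck} with $\delta^*<\infty$, summing $\int_{\Delta_k}q_-\leq C_0\delta_k$ over components $\Delta_k\subset \Sig^c$ that meet $I_n$ will produce a uniform bound $\int_{I_n}|q(t)|dt\leq C$. Then, exploiting $|\Sig|=0$, I will set
\begin{align*}
\phi_n &:= c\,\chi_{[x_n,x_n+\varepsilon_0/2]\cap \Sig^c}-c\,\chi_{[x_n+\varepsilon_0/2,x_n+\varepsilon_0]\cap \Sig^c}, & f_n(x) &:=\int_0^x \phi_n(t)\,dt,
\end{align*}
with $c$ a constant depending only on $\varepsilon_0$ fixed so that $\|f_n\|_{L^2}=1$; this uniform normalization works because $|\Sig|=0$ forces $f_n$ to be a genuine tent function of height $c\varepsilon_0/2$, independently of the fine structure of $\Sig\cap I_n$. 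Since $\phi_n$ vanishes on $\Sig=\supp{d\nu}$ and $\int\phi_n(t)dt=0$, I will verify that $df_n=\phi_n\,dx=\phi_n\,dP$, so that $f_n\in AC_{\loc}(\R_+;dP)$ with quasi-derivative $f_n^{[1]}=\phi_n$ and compact support inside $I_n$; thus $f_n\in W^{1,2}_{\comp}(\R_+;dP)=\dom{\gt^0}$. The form value will then read
\begin{align*}
\gt^0[f_n]=\int_{\R_+}|\phi_n|^2\, dP+\int_{\R_+}q|f_n|^2\, dx=c^2\varepsilon_0+\int_{I_n}q|f_n|^2\, dx,
\end{align*}
uniformly bounded in $n$ by $\|f_n\|_\infty\leq c\varepsilon_0/2$ and the preceding uniform $|q|$-bound.

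Disjoint supports will imply $f_n\rightharpoonup 0$ in $L^2(\R_+)$, whereas boundedness of $\gt^0[f_n]+\|f_n\|^2$ will bound $\{f_n\}$ in the energy norm of $\gt$; were the spectrum of $\rH_{\nu,q}$ purely discrete, the induced embedding $\dom{\gt}\hookrightarrow L^2(\R_+)$ would be compact and would force $f_n\to 0$ strongly, contradicting $\|f_n\|_{L^2}=1$. I expect the main technical obstacle to be the verification $f_n\in \dom{\gt^0}$: the quasi-derivative calculus forces $df_n\ll dP$, i.e.\ $\phi_n\, d\nu=0$, and this is feasible only because $\phi_n$ can be arranged to vanish on $\Sig$ while still carrying a non-trivial $dx$-profile---a freedom afforded precisely by the hypothesis $|\Sig|=0$.
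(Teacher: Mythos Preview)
Your argument is correct and the strategy is sound. The key insight---that a compactly supported Lipschitz function $f_n$ has quasi-derivative $f_n^{[1]}$ vanishing $d\nu$-a.e.\ on $\Sigma$, so that the $d\nu$-contribution to the form disappears---is exactly right, and your bound $\int_{I_n}|q|\le M+2C_0(\varepsilon_0+2\gd^*)$ via \eqref{I_brinck} and \eqref{eq:d*} goes through. One small point: your appeal to Lemma~\ref{lem:3.2} to identify $\overline{\gt^0}$ with the form of $\rH_{\nu,q}$ implicitly needs $\gt^0$ itself to be lower semibounded, which you have not directly assumed; but since your $f_n$ are genuine $W^{1,2}_{\comp}(\R_+)$ functions vanishing near $0$, the same computation that the paper carries out (namely $\gt[f]=\gt_q[f]$ for $f\in W^{1,2}_{\comp}(\R_+)$, because $f^{[1]}=0$ $d\nu$-a.e.) places them in the form domain of any lower semibounded self-adjoint realization, so this is easily patched.

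The paper's proof takes a different, more abstract route: rather than building an explicit Weyl sequence, it shows that the energy space $\gH_q$ of the ordinary Schr\"odinger form $\gt_q[f]=\int|f'|^2+q|f|^2\,dx$ embeds continuously into the energy space $\gH$ of $\rH_{\nu,q}$ (using precisely the observation that $f^{[1]}=0$ on $\Sigma$ for $f\in W^{1,2}_{\comp}$), and then invokes the classical Molchanov criterion as a black box: if $\gH\hookrightarrow L^2$ were compact, so would be $\gH_q\hookrightarrow L^2$, forcing \eqref{4.1_Molchanov}. Your approach unpacks that black box by hand, building the tent functions that witness non-compactness directly. The paper's route is shorter and more conceptual (it exhibits $\rH_{\nu,q}$ as a form perturbation dominating $\rH_q$), while yours is fully self-contained and avoids quoting the Molchanov theorem. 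The underlying mechanism---absolutely continuous test functions feel no $\delta'$-interaction---is identical in both.
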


     \begin{proof}
 Let $\rH_q$ be the Neumann realization of $-d^2/dx^2+q(x)$ in $L^2(\R_+)$. Since $q$ satisfies \eqref{I_brinck}, we know that $\rH_q$ is lower semibounded and self-adjoint in $L^2(\R_+)$. Moreover, the corresponding form $\gt_q$ is given by (see, e.g., \cite{AKM_10})
 \be
 \gt_q[f]=\int_{\R_+}|f'|^2+ q(x) |f|^2\, dx,\quad \dom{\gt_q}=\gH_q:=W^{1,2}(\R_+)\cap \gt_q.
 \ee     
     
By assumption, the operator $\rH_{\nu,q}$ is lower semibounded and self-adjoint. Without loss of generality we can assume that $\rH_{\nu,q}\ge I$. Denote by $\gt$ and $\gH$ the corresponding quadratic form and the energy space. Let us show that $\gH_q$ is continuously embedded into $\gH$. First of all, note that if $f\in W^{1,2}_{\loc}(\R_+)$, then $f^{\qd}(x)=f'(x)$ for almost all $x\in \Sig^c$ and $f^{\qd}(x)=0$ for $d\nu$-almost all $x\in\Sig$ since 
\begin{align}
f(x)-f(0)=\int_0^x f'\, dt=\int_0^x f^{\qd}\, dP(t)=\int_0^x f'\, dt+\int_0^x f^{\qd}\, d\nu(t)
\end{align}
for all $x\in\R_+$. Therefore, $f\in W^{1,2}_{\loc}(\R_+;dP)$ and we get  (cf. section \ref{ss:qform})
\begin{align}
\gt[f]=\int_{\R_+}|f^{\qd}|^2dx+\int_{\R_+} q(x) |f|^2\, dx=\int_{\R_+}|f'|^2+ q(x) |f|^2\, dx=\gt_q[f]<\infty
\end{align}
for all $f\in W^{1,2}_c(\R_+)$. This implies that $W^{1,2}_c(\R_+)=\gH_q\cap W^{1,2}_c(\R_+)\subset \gH$. It remains to note that $\gH$ is closed with respect to the energy norm $\gt$ and $W^{1,2}_c(\R_+)$ is a core for $\gt_q$. Therefore, the closure of $W^{1,2}_c(\R_+)$ in $\gH$ coincides with $\gH_q$ since $\|\cdot\|_{\gH}^2=\gt[\cdot]$ on $\gH$ and hence $\gH_q\subset \gH$. Moreover, in view of \cite[Theorem 2.6.2]{Ios65} (see also \cite[Remark IV.1.5]{Kato66}), the embedding $i_1: \gH_q
\hookrightarrow \gH$ is continuous.

Finally, if the spectrum  $\sigma(\rH_{\nu,q})$ is discrete, then,
by the Rellich theorem, the embedding $i_2: \gH
 \hookrightarrow L^2(\R_+)$ is compact. Hence so is the embedding
 $i = i_2 i_1:  \gH_q\to L^2(\R_+)$. However, this implies that Molchanov's condition
\eqref{4.1_Molchanov} is satisfied. This contradiction completes the
proof.
     \end{proof}

\begin{remark}\label{Rem3.8}
Note that Proposition \ref{prop4.5} is no longer true if $\Sig$ is nowhere dense but $|\Sig|>0$.
        \end{remark}
        
        As an immediate corollary of Proposition \ref{prop4.5} we obtain the following result.

        \begin{corollary}\label{cor3.8}
Let $q\in L^\infty(\R_+)$ and $|\Sig| =0$. If the operator
$\rH_{\nu,q}$ is lower semibounded, then its spectrum is not
discrete. In particular, the spectrum of $\rH_{\nu} = \rH_{\nu,0}$ is not discrete
whenever it is lower semibounded.
    \end{corollary}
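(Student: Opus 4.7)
The plan is to apply Proposition~\ref{prop4.5} directly. The essence is that a bounded potential $q$ cannot satisfy Molchanov's condition \eqref{4.1_Molchanov}, so under the hypothesis of lower semiboundedness the proposition forbids discreteness of the spectrum.

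First I would verify the remaining hypotheses of Proposition~\ref{prop4.5}. The assumption $|\Sigma|=0$ is part of the statement, and $q\in L^\infty(\R_+)\subset L^1_{\loc}(\R_+)$. The Brinck-type condition \eqref{I_brinck} is immediate from the pointwise bound $0\le q_-\le\|q\|_{L^\infty}$, since
\[
\frac{1}{d_k}\int_{\Delta_k} q_-(x)\,dx\le \|q\|_{L^\infty(\R_+)}
\]
uniformly in $k$. Self-adjointness and lower semiboundedness of $\rH_{\nu,q}$ are assumed in the corollary, and the implicit hypothesis $d^*=\sup_k d_k<\infty$ from \eqref{eq:d*} is inherited from the ambient setup of Section~\ref{sec:VIII} and from Proposition~\ref{prop4.5} itself.

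The key observation is that for $q\in L^\infty(\R_+)$ Molchanov's condition fails trivially: for every $\varepsilon>0$ and every $x\in\R_+$,
\[
\Big|\int_x^{x+\varepsilon} q(t)\,dt\Big|\le \varepsilon\,\|q\|_{L^\infty(\R_+)},
\]
which is a finite constant independent of $x$. Hence $\int_x^{x+\varepsilon}q(t)\,dt$ stays bounded as $x\to+\infty$ and in particular cannot diverge to $+\infty$, so \eqref{4.1_Molchanov} is violated. Proposition~\ref{prop4.5} then forces the spectrum of $\rH_{\nu,q}$ not to be discrete. The second assertion is just the special case $q\equiv 0$, which trivially lies in $L^\infty(\R_+)$. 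The only mildly delicate point is the verification that $d^*<\infty$ is in force; everything else amounts to a routine check of the hypotheses of Proposition~\ref{prop4.5}.
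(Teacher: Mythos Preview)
Your proof is correct and follows essentially the same approach as the paper: verify that $q\in L^\infty(\R_+)$ satisfies \eqref{I_brinck} but fails Molchanov's condition \eqref{4.1_Molchanov}, then invoke Proposition~\ref{prop4.5}. The paper's own proof is a terser version of exactly this argument.
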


\begin{proof}
If $q\in L^\infty(\R_+)$, then it satisfies \eqref{I_brinck} but does
not satisfy \eqref{4.1_Molchanov}. Proposition \ref{prop4.5}
completes the proof.
  \end{proof}

The next result states that a condition similar to \eqref{eq:4.9} is also necessary for the discreteness. Note that we do not assume that $|\Sigma|=0$  in this case.

     \begin{proposition}\label{prop_Second_Necces_Cond}
Let $q\in L^1_{\loc}(a,b)$ be nonnegative.  Let also $\Sig\subset (a,b)$ be the (closed) topological support of $d\nu$ and $\Sig^c=\cup_{k\in\N}\Delta_k$, where $\Delta_i\cap \Delta_j=\emptyset$ whenever $i\neq j$. If the operator  $\rH_{\nu,q}$ is lower semibounded and its spectrum is discrete,
then
    \begin{equation}\label{8.48}
\frac{1}{d^2_k} + \frac{1}{d_k} \int_{\Delta_k}q(x) dx \to
\infty\quad \text{as}\quad  k\to\infty.
    \end{equation}
    \end{proposition}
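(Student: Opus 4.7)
The strategy is contrapositive: assuming that $\frac{1}{d_k^2}+\frac{1}{d_k}\int_{\Delta_k}q(x)\,dx$ fails to tend to infinity, I will construct an orthonormal sequence bounded in the energy space of $\rH_{\nu,q}$ and thereby contradict the compactness of the embedding $\gH\hookrightarrow L^2$ which, via the Rellich theorem, is equivalent to the discreteness of the spectrum of the semibounded self-adjoint operator $\rH_{\nu,q}$.

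Concretely, on each component interval $\Delta_k=(a_k,b_k)$ I would take the normalized Dirichlet eigenfunction
\[
\phi_k(x)=\sqrt{\tfrac{2}{d_k}}\sin\!\left(\tfrac{\pi(x-a_k)}{d_k}\right),\qquad x\in\Delta_k,
\]
extended by zero to $\R_+$. Since $\phi_k$ vanishes continuously at the endpoints of $\Delta_k$ and $\supp{\phi_k}\subset\Delta_k\subset\Sig^c$, the extension lies in $W^{1,2}_{\comp}(\R_+;dP)$: the associated Borel measure $d\phi_k$ is absolutely continuous with density $\phi_k'$ on $\Delta_k$ and vanishes on $\Sig$, so $\phi_k^{[1]}\in L^2(\R_+;|dP|)$ with $\phi_k^{[1]}=\phi_k'\chi_{\Delta_k}$. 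In particular $\phi_k\in\dom{\gt^0}\subset\dom{\gt}$, where $\gt$ is the closed quadratic form of $\rH_{\nu,q}$.

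A direct computation using $d\nu(\Delta_k)=0$ and $q\ge 0$ gives
\begin{align*}
\gt[\phi_k]=\int_{\Delta_k}|\phi_k'|^2\,dx+\int_{\Delta_k}q(x)|\phi_k|^2\,dx\le \frac{\pi^2}{d_k^2}+\frac{2}{d_k}\int_{\Delta_k}q(x)\,dx,
\end{align*}
while $\|\phi_k\|_{L^2(\R_+)}=1$ and the $\phi_k$ are pairwise orthogonal in $L^2(\R_+)$ because their supports are disjoint. Now suppose, toward a contradiction, that \eqref{8.48} fails; then there exist a subsequence $\{k_j\}$ and a constant $M$ with $\gt[\phi_{k_j}]\le M$ for all $j$. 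Consequently $\{\phi_{k_j}\}$ is bounded in the energy Hilbert space $\gH$ associated with $\gt$. Since $\rH_{\nu,q}$ is lower semibounded with purely discrete spectrum, the Rellich compactness theorem applied to $\gH\hookrightarrow L^2(\R_+)$ forces a subsequence of $\{\phi_{k_j}\}$ to converge in $L^2(\R_+)$, contradicting the fact that an orthonormal system has no convergent subsequence. Hence \eqref{8.48} must hold.

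The main subtlety to verify carefully is the admissibility of the test functions for $\gt$: one needs $\phi_k\in\dom{\gt}$ and a clean formula for $\gt[\phi_k]$ despite the singular measure $d\nu$. This is where the hypothesis $\supp{\phi_k}\subset\Sig^c$ (via the continuous vanishing of $\phi_k$ at $a_k,b_k$) is essential, reducing the measure-theoretic computation to the ordinary $W^{1,2}(\Delta_k)$ one; no assumption on $|\Sigma|$ is required.
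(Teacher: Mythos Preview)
Your proof is correct and follows essentially the same route as the paper's: construct test functions supported on the individual component intervals $\Delta_k$, bound the quadratic form by a constant times $\frac{1}{d_k^2}+\frac{1}{d_k}\int_{\Delta_k}q$, and use the Rellich theorem contrapositively to rule out a bounded orthogonal (in $L^2$) sequence in the energy space. The only difference is cosmetic---the paper uses the tent functions $h_k(x)=\frac{1}{2\sqrt{d_k}}\bigl(1-\frac{2}{d_k}|x-\frac{a_k+b_k}{2}|\bigr)\chi_{\Delta_k}$ instead of your first Dirichlet eigenfunctions $\phi_k$, leading to the same estimate with slightly different constants; your discussion of why $\phi_k\in W^{1,2}_{\comp}(\R_+;dP)$ (via $\phi_k^{[1]}=\phi_k'\chi_{\Delta_k}$ and the vanishing of $d\nu$ on $\Delta_k$) is in fact more explicit than the paper's.
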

\begin{proof}
Let $\gd_k=|\Delta_k|=b_k-a_k$ and set

   \begin{equation}
h_k(x)=\frac{1}{2\sqrt{\gd_k}}\, \chi_{\Delta_k}(x)\left(1-\frac{2}{\gd_k}\Big|x-\frac{\gd_k}{2}\Big|\right),\quad x\in (a,b).
   \end{equation}
Clearly,  $h_k\in W^{1,2}_c((a,b);dP)\cap W^{1,2}_c(a,b)$ and 
   \begin{equation}\label{8.58}
\|h_k\|_{L^2(a,b)}= \frac{1}{12},\qquad \|h_k'\|_{L^2(a,b)}= \frac{1}{\gd_k^2},\quad k\in\N.
   \end{equation}
Moreover,
    \be\label{8.51}
\gt[h_k]=\frac{1}{d^2_k} +
\frac{1}{4\gd_k}\int_{\Delta_k} \Big(1-\frac{2}{\gd_k}\Big|x-\frac{\gd_k}{2}\Big|\Big)^2  q(x) dx\le \frac{1}{d^2_k} +
\frac{1}{4\gd_k}\int_{\Delta_k}q(x)dx.
    \ee
If condition \eqref{8.48} is not satisfied, then there is
a subsequence $\{k_j\}^{\infty}_{j=1}$ such that
    \begin{equation}\label{8.52}
\frac{1}{d^2_{k_j}} + \frac{1}{d_{k_j}} \int_{\Delta_{k_j}}q(x) dx
\le C_0<\infty, \qquad j\in\N.
   \end{equation}
This immediately implies that the  subsequence $\{h_{k_j}\}_{j=1}^\infty$ is bounded in the energy space $\gH$. However, all these functions are uniformly bounded in $L^2(a,b)$ and have disjoint supports. Hence this sequence is not compact in $L^2(a,b)$.  Therefore, the
embedding $\gH\hookrightarrow L^2(a,b)$ is not
compact and hence, by the Rellich theorem, the operator $\rH_{\nu,q}$
is not discrete. This contradiction completes the proof.
   \end{proof}

  \subsection{Sufficient conditions for the discreteness of the spectrum of $\rH_{\nu,q}$}  \label{ss:8.4}

Consider the form \eqref{eqnform} introduced in Section \ref{ss:qform}
\be\label{eq:qform0}
\gt_{\nu,q}^0[f]=\int_{\R_+}|f^{\qd}|^2dP(x)+\int_{\R_+} q(x) |f|^2\, dx,\quad f\in\dom{\gt_{\nu,q}^0}=W^{1,2}_c(\R_+; dP).
\ee
Since $|\Sig|=0$, we have $f^{\qd}(x) = f'(x)$ a.e.\ on $\R_+$ for all 
 $f\in W^{1,2}_c(\R_+; dP)$. Moreover, $W^{1,2}_c(\R_+; dP)\subset W^{1,2}(\Sig^c;q)$ and hence the quadratic form
\eqref{eq:qform0} admits the following representation
\begin{align}\begin{split}
\gt_{\nu,q}^0[f]
=  \int_{\R_+}|f'|^2 dx &+
\int_{\Sig}|f^{\qd}|^2\,dP(x)+ \int_{\R_+}q(x) |f|^2\, dx  \\
&= \gt_{\Sig,q}[f] + 
\int_{\Sig}|f^{\qd}|^2\,d\nu,\quad f\in W^{1,2}_c(\R_+;dP).\label{4.10_form}
    \end{split}\end{align}
 Therefore, the form \eqref{eq:qform0} can be considered as an additive perturbation of the form $\gt_{\Sig,q}$.  Note that by Lemma \ref{lem:3.2} the form $\gt_{\nu,q}^0$ is always closable if it is lower semibounded. With  a nonnegative measure $d\nu$ we  associate the following form
\be\label{eq:nuform}
\nu[f]=\int_{\R_+} |f^{\qd}|^2d\nu(x),\qquad f\in W^{1,2}(\R_+;dP).
\ee

\begin{lemma}\label{lem8.10}
Let the form $\gt_{\Sig,q}$ be lower semi-bounded and $d\nu$ be nonnegative, $d\nu=
d\nu_+$. Then the form
\be\label{eq:8.37}
\gt_{\nu,q}[f]=\gt_{\Sig,q}[f]+ \nu[f],\qquad \dom{\gt_{\nu,q}}=\dom{\gt_{\Sig,q}}\cap \dom{\nu}
\ee
is lower semibounded and closed. The corresponding energy  space  $\gH_{\nu,q}$ is given by
\be\label{eq:8.38}
\gH_{\nu,q} = \biggl\{f\in \dom{\gt_{\nu,q}}:  
\lim_{n\to\infty} \int_{\Sig^c_n}  |f'|^2 + |f|^2
 q(x)dx + \nu[f] <\infty \biggr\}.     
\ee
\end{lemma}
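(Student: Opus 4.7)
I plan a three-step argument.

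\emph{Step 1 (lower semiboundedness).} Since $d\nu=d\nu_+\ge 0$, the form $\nu[f]=\int_{\R_+}|f^{\qd}|^2\,d\nu$ is nonnegative on $\dom{\nu}$. Let $-C$ be a lower bound for $\gt_{\Sig,q}$ (whose existence is granted by Lemma \ref{lem_semibound}). Then $\gt_{\nu,q}[f]=\gt_{\Sig,q}[f]+\nu[f]\ge -C\|f\|^2_{L^2(\R_+)}$ for every $f\in\dom{\gt_{\nu,q}}$, so $\gt_{\nu,q}$ inherits the same lower bound as $\gt_{\Sig,q}$.

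\emph{Step 2 (closedness).} The plan is to invoke the standard fact that the sum of a closed semibounded quadratic form and a closed nonnegative form, taken on the intersection of their domains, is itself closed. Since $\gt_{\Sig,q}$ is closed by Lemma \ref{lem_semibound}(ii), it suffices to verify that $\nu$ is closed on $\dom{\nu}$. For this, I would take a sequence $\{f_n\}\subset\dom{\nu}$ with $f_n\to f$ in $L^2(\R_+)$ and $\nu[f_n-f_m]\to 0$; then $\{f_n^{\qd}\}$ is Cauchy in $L^2(\R_+;d\nu)$ and converges to some $g$, and I would identify $g$ with $f^{\qd}$ $d\nu$-almost everywhere by passing to the limit in the Radon--Nikod\'ym relation \eqref{eq:1_qd'}, split as $dP=dx+d\nu$ and tested against smooth compactly supported functions. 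Alternatively, closedness of $\nu$ can be read off from the fact that the map $f\mapsto f^{\qd}$ is bounded from the graph-normed space $\dom{\gt_{\Sig,q}}\cap\dom{\nu}$ into $L^2(\R_+;d\nu)$ by construction.

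\emph{Step 3 (energy space).} Once closedness is established, the graph norm on $\dom{\gt_{\nu,q}}$ is $\|f\|^2_{\gH_{\nu,q}}=\gt_{\Sig,q}[f]+\nu[f]+(C+1)\|f\|^2_{L^2(\R_+)}$. Combining the characterization \eqref{8.13} of $\gH_{\Sig,q}$ from Lemma \ref{lem8.1}(ii), which expresses $\gt_{\Sig,q}[f]$ as $\lim_{n\to\infty}\int_{\Sig^c_n}(|f'|^2+q(x)|f|^2)\,dx$, with the additional contribution $\nu[f]$, one obtains \eqref{eq:8.38} directly.

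The main obstacle is Step 2: the quasi-derivative is defined through Radon--Nikod\'ym differentiation with respect to a measure $dP$ whose singular part $d\nu$ has no uniform comparison with the Lebesgue measure on $\Sig$, so transferring the $L^2(d\nu)$-limit inside the quasi-derivative requires a careful testing argument on the identity \eqref{eq:1_qd'} rather than a direct pointwise passage to the limit.
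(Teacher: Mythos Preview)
Your Steps 1 and 3 are fine; the real issue is Step~2. The form $\nu$ on its stated domain $\dom{\nu}=W^{1,2}(\R_+;dP)$ is \emph{not} closed in $L^2(\R_+)$, so the ``sum of two closed forms'' lemma does not apply. To see this, take any sequence $f_n\in C_c^\infty(\R_+)$: since each $f_n$ is absolutely continuous with respect to Lebesgue measure, the decomposition $df_n=f_n^{\qd}dx+f_n^{\qd}d\nu$ forces $f_n^{\qd}=0$ $d\nu$-a.e., hence $\nu[f_n]=0$ for every $n$. Now let $f_n\to f$ in $L^2(\R_+)$ for some $f$ that is not locally of bounded variation. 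Then $\nu[f_n-f_m]\equiv 0$, but $f\notin W^{1,2}(\R_+;dP)=\dom{\nu}$, so closedness fails. Your proposed testing argument on \eqref{eq:1_qd'} cannot repair this: the hypothesis gives you a limit $g$ in $L^2(d\nu)$ for the quasi-derivatives, but it gives \emph{no} control whatsoever over $f_n^{\qd}$ in $L^2(dx)$, and without that you cannot conclude that the limit $f$ admits a quasi-derivative at all. Your alternative (boundedness of $f\mapsto f^{\qd}$ in the graph norm) is tautological and does not yield closedness of $\nu$ as a form in $L^2$.

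The paper circumvents this by arguing closedness of $\gt_{\nu,q}$ directly. After normalizing to $q\ge 1$, the graph norm of $\gt_{\nu,q}$ simultaneously dominates the $\gH_{\Sig,q}$-norm \emph{and} the $W^{1,2}(\R_+;dP)$-norm (the latter because $\gt_{\Sig,q}[f]$ controls $\int_{\Sig^c}|f'|^2\,dx=\int_{\R_+}|f^{\qd}|^2\,dx$ while $\nu[f]=\int_{\R_+}|f^{\qd}|^2\,d\nu$). A Cauchy sequence in $\gH_{\nu,q}$ therefore has a limit $f_h$ in $\gH_{\Sig,q}$ and a limit $f_P$ in the complete space $W^{1,2}(\R_+;dP)$. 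These agree on $\Sig^c$, and the key point---which replaces your missing step---is that a function in $W^{1,2}(\R_+;dP)$ vanishing on the dense set $\Sig^c$ must vanish identically (it has one-sided limits everywhere and at most countably many jumps). This identifies the two limits and places the limit in $\dom{\gt_{\nu,q}}$. The moral: you need the \emph{full} $dP$-Sobolev norm, not just its $d\nu$-part, and that control comes from combining $\gt_{\Sig,q}$ and $\nu$ rather than from $\nu$ alone.
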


\begin{proof}
Without loss of generality we can assume that $q\ge 1$ on $\R_+$. Let us show that every Cauchy sequence $\{f_n\}_{n=1}^\infty\subset \gH_{\nu,q}$ is convergent and has a unique limit. Since $\gH_{\Sig,q}$ and $W^{1,2}(\R;dP)$ are Hilbert spaces, there are functions $f_h\in\gH_{\Sig,q}$ and $f_P\in W^{1,2}(\R_+;dP)$ such that $f_n\to f_h$ and $f_n\to f_P$ in $\gH_{\Sig,q}$ and $W^{1,2}(\R_+;dP)$, respectively. Clearly, $f_h=f_P$ for all $x\in\Sig^c=\cup_{k\in\N}\Delta_k$. To show that $f_P$ is defined uniquely by its restriction to $\Sigma^c$
assume that $f_P= 0$ on $\Sigma^c$. Since $f\in W^{1,2}(\R_+,dP)$,Êthe values $f(x \pm)$Êexist for each $x\in\R_+$ 
and $f(x+) \neq f(x-)$ for at mostÊa countable subset. As $\Sigma^c$ is dense in $\R_+$, one has $f(x+)Ê= f(x-) = 0$ for all $x$.

The last claim immediately follows from Lemma \ref{lem8.1}.
\end{proof}

\begin{corollary}\label{lem_Embedding}
Let the form $\gt_{\Sig,q}$ be lower semibounded and  $d\nu$ be a nonnegative singular measure with $|\Sig|=0$.
Then the norms $\|\cdot\|_{\gH_{\Sig,q}}$ and $\|\cdot\|_{\gH_{\nu,q}}$ are compatible and the map
        \begin{equation}\label{8.11}
i: \gH_{\nu,q} \hookrightarrow \gH_{\Sig,q}, \qquad
i(f):=f\lceil \Sig^c,
        \end{equation}
is a well-defined continuous embedding.
  \end{corollary}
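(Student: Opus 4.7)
The plan is to exploit that $W^{1,2}_c(\R_+;dP)$ is a common core, dense in $\gH_{\nu,q}$, whose restriction to $\Sig^c$ already sits inside $\gH_{\Sig,q}$. The key estimate comes from $\nu[\,\cdot\,]\ge 0$, so the corollary will reduce to a density argument combined with the uniqueness observation in the proof of Lemma~\ref{lem8.10}.

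First I would verify that for $f\in W^{1,2}_c(\R_+;dP)$ the restriction $f\lceil\Sig^c$ lies in $W^{1,2}(\Sig^c;q)=\gH_{\Sig,q}$. Since $dP$ agrees with Lebesgue measure on $\Sig^c$, the quasi-derivative $f^{\qd}$ agrees a.e.\ on $\Sig^c$ with the ordinary derivative, and $f^{\qd}\in L^2(\R_+;dP)$ forces $(f\lceil\Sig^c)'\in L^2(\R_+)$; the $q$-term is finite because $f$ has compact support and $q\in L^1_{\loc}$. Using $d\nu\ge 0$ gives $\nu[f]\ge 0$, hence
\begin{align}
\gt_{\nu,q}[f]\;=\;\gt_{\Sig,q}[f]+\nu[f]\;\ge\;\gt_{\Sig,q}[f],\qquad f\in W^{1,2}_c(\R_+;dP).
\end{align}
Adding a common multiple of $\|f\|^2_{L^2(\R_+)}$ to both sides (matching the semiboundedness constants used to define the two energy norms) yields the contraction $\|i(f)\|_{\gH_{\Sig,q}}\le \|f\|_{\gH_{\nu,q}}$ on the core.

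Since $W^{1,2}_c(\R_+;dP)$ is dense in $\gH_{\nu,q}$ by the construction in Lemma~\ref{lem8.10}, the bounded linear map $i$ extends uniquely by continuity to a map $\gH_{\nu,q}\to\gH_{\Sig,q}$. To see that this extension is literal pointwise restriction (hence well defined), I invoke the last step of the proof of Lemma~\ref{lem8.10}: every $f\in\gH_{\nu,q}$ is represented by a unique $f_P\in W^{1,2}(\R_+;dP)$, and $f_P$ is pointwise determined by its values on the dense set $\Sig^c$. The same observation yields injectivity: if $i(f)=0$, then $f_P$ vanishes on $\Sig^c$, forcing $f_P\equiv 0$ and thus $f=0$ in $\gH_{\nu,q}$. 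Compatibility of the two norms is then automatic: if a sequence $\{f_n\}$ from the common core is Cauchy in both norms with limits $f$ and $g$, continuity of $i$ forces $g=i(f)$. I do not anticipate a serious obstacle; the whole argument reduces to the positivity $\nu[\,\cdot\,]\ge 0$, the embedding $\gH_{\nu,q}\hookrightarrow W^{1,2}(\R_+;dP)$ from Lemma~\ref{lem8.10}, and density. The single delicate point is confirming that the abstract continuous extension of $i$ really is pointwise restriction to $\Sig^c$, which is precisely what the closing lines of the proof of Lemma~\ref{lem8.10} provide.
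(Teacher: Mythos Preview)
Your argument is sound in spirit, but it takes an unnecessary detour and leans on a density claim that Lemma~\ref{lem8.10} does not actually supply. The paper's proof is more direct: by the very definition~\eqref{eq:8.37}, $\dom{\gt_{\nu,q}}=\dom{\gt_{\Sig,q}}\cap\dom{\nu}$, so every $f\in\gH_{\nu,q}$ already lies in $\gH_{\Sig,q}$ without any approximation step. The restriction $f\lceil\Sig^c$ is thus well defined for all $f\in\gH_{\nu,q}$ (this is what the paper extracts from Lemma~\ref{lem8.10}), and the inequality $\|f\|_{\gH_{\Sig,q}}\le\|f\|_{\gH_{\nu,q}}$ holds immediately on the full space from $\nu[f]\ge 0$. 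No extension by continuity is needed.

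By contrast, you first establish the norm inequality only on $W^{1,2}_c(\R_+;dP)$ and then assert that this set is dense in $\gH_{\nu,q}$ ``by the construction in Lemma~\ref{lem8.10}.'' That lemma, however, proves that the form $\gt_{\nu,q}$ is closed on the domain $\dom{\gt_{\Sig,q}}\cap\dom{\nu}$; it does not identify $W^{1,2}_c(\R_+;dP)$ as a core. (Lemma~\ref{lem:3.2} does give a core statement, but under the additional hypotheses of Theorem~\ref{th:gpw}.) So your density step would need a separate justification. Once you notice that $\gH_{\nu,q}\subseteq\gH_{\Sig,q}$ holds by definition, the entire density-and-extension machinery becomes superfluous, and the injectivity/compatibility discussion reduces to the observation you already made via Lemma~\ref{lem8.10}: an element of $\gH_{\nu,q}$ is uniquely determined by its values on $\Sig^c$.
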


   \begin{proof}
It follows from  Lemma \ref{lem8.10} that the 
restriction  $f\lceil \Sig^c_n$ is well-defined for all $f\in\gH_{\nu,q}$ since $\gH_{\nu,q}\subseteq \gH_{\Sig,q}$.
Since the measure $d\nu$ is nonnegative, one has   $\|f\|_{\gH_{\Sig,q}} \le \|f\|_{\gH_{\nu,q}}$ for all $f\in {\gH_{\nu,q}}$. Hence the embedding \eqref{8.11} is continuous.
      \end{proof}

Combining Lemma \ref{lem_semibound}  with Lemma \ref{lem8.10}
we arrive at the following result.

   \begin{corollary}\label{cor:8.12}
Assume that conditions \eqref{eq:d*} and \eqref{I_brinck} are satisfied  and the measure
$d\nu$ is nonnegative, $d\nu=d\nu_+$. Then the form $\gt_{\nu,q}$ given by \eqref{eq:8.37} is
lower semibounded and closed. 
Moreover, the energy space $\gH_{\nu,q}$ is given by \eqref{eq:8.38}.
   \end{corollary}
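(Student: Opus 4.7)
The plan is to prove this by directly chaining the two preceding lemmas, since the corollary is essentially a packaging of their combined conclusions under the more concrete hypotheses \eqref{eq:d*} and \eqref{I_brinck}.

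First I would verify that the hypotheses of Lemma~\ref{lem_semibound} are in force: condition \eqref{eq:d*} gives uniform bounds on the gap lengths $d_k$, and \eqref{I_brinck} controls the negative part $q_-$ averaged on each $\Delta_k$. Lemma~\ref{lem_semibound}(i)--(ii) then yields that $\gq_-$ is infinitesimally $\gt_\Sig$-bounded, so that $\gt_{\Sig,q}^0$ is lower semibounded and closed on $W^{1,2}(\Sig^c;q)$, with energy space $\gH_{\Sig,q}=W^{1,2}(\Sig^c;q)$ and the associated self-adjoint operator identified with $\rH_{\Sig,q}^N$. In particular, the form $\gt_{\Sig,q}$ appearing in \eqref{eq:8.37} is lower semibounded and closed.

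Having secured lower semiboundedness of $\gt_{\Sig,q}$, together with the nonnegativity assumption $d\nu=d\nu_+$, the hypotheses of Lemma~\ref{lem8.10} are met. Applying that lemma to the pair $(\gt_{\Sig,q},\nu)$ shows that the form sum $\gt_{\nu,q}=\gt_{\Sig,q}+\nu$ is lower semibounded and closed on $\dom{\gt_{\Sig,q}}\cap\dom{\nu}$, and produces exactly the description \eqref{eq:8.38} for its energy space $\gH_{\nu,q}$.

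There is no real obstacle; the only point worth double-checking is that the addition is genuinely closed, which reduces to the fact that a nonnegative closable form added to a closed lower semibounded form is again closed, together with the uniqueness of $f\in W^{1,2}(\R_+;dP)$ from its restriction to $\Sigma^c$ (which was already established in the proof of Lemma~\ref{lem8.10}). Consequently nothing new need be verified, and the proof consists simply of invoking Lemmas~\ref{lem_semibound} and \ref{lem8.10} in this order.
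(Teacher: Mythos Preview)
Your proposal is correct and matches the paper's own proof, which is simply the one-line remark that the corollary follows by combining Lemma~\ref{lem_semibound} with Lemma~\ref{lem8.10}. Your fleshing-out of how the two lemmas chain (first obtaining lower semiboundedness of $\gt_{\Sig,q}$ from \eqref{eq:d*} and \eqref{I_brinck}, then feeding this into Lemma~\ref{lem8.10} together with $d\nu=d\nu_+$) is exactly the intended argument.
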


The next result provides a sufficient condition for the discreteness of the spectra of Hamiltonians $\rH_{\nu, q}$.

\begin{theorem}\label{th8.2}
Let $d\nu$ be  nonnegative and such that $|\Sig|=0$ and \eqref{eq:d*} hold. Assume also that $q\in
L^1_{\loc}(\R_+)$ satisfies  \eqref{I_brinck}. Then
conditions \eqref{4.1_Molchanov} and \eqref{eq:4.9} are sufficient for the  Hamiltonian  $\rH_{\nu,q} (= \rH_{\nu,q}^*)$ to be lower semibounded and to have a discrete spectrum.
   \end{theorem}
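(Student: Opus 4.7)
The plan is to combine the form machinery developed in Section~\ref{ss:qform} and this section with the discreteness criterion for the Neumann realization (Theorem~\ref{th:discr_N}), factoring the embedding into $L^2(\R_+)$ through the intermediate space $\gH_{\Sig,q}$.

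First I would invoke Corollary~\ref{cor:8.12}: since $d\nu\ge 0$, $|\Sig|=0$, $\gd^*<\infty$ and $q$ satisfies \eqref{I_brinck}, the form $\gt_{\nu,q}$ of \eqref{eq:8.37} is lower semibounded and closed, with energy space $\gH_{\nu,q}$ described by \eqref{eq:8.38}. Writing \eqref{4.10_form}, one sees that the restriction of $\gt_{\nu,q}$ to $W^{1,2}_c(\R_+;dP)$ agrees with the form $\gt_{\nu,q}^0$ introduced in Section~\ref{ss:qform}. Combining Lemma~\ref{lem:3.2}(ii) with Theorem~\ref{th:gpw}(ii) (which applies because $|\Sig|=0$), the self-adjoint operator associated with this closed, lower-semibounded form is the maximal Hamiltonian $\rH=\rH_{\nu,q}$. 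This gives the lower semiboundedness and self-adjointness of $\rH_{\nu,q}$ claimed in the theorem.

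Next I would apply Theorem~\ref{th:discr_N} to the Neumann realization $\rH_{\Sig,q}^N$. Its hypotheses are exactly \eqref{eq:d*}, \eqref{I_brinck}, \eqref{4.1_Molchanov} and \eqref{eq:4.9} (lower semiboundedness of $\rH_{\Sig,q}^N$ follows from Lemma~\ref{lem_semibound}), so the spectrum of $\rH_{\Sig,q}^N$ is purely discrete. Equivalently, by the Rellich theorem, the embedding
\[
i_1:\gH_{\Sig,q}\hookrightarrow L^2(\R_+)
\]
is compact.

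Finally, by Corollary~\ref{lem_Embedding}, the restriction map
\[
i_2:\gH_{\nu,q}\hookrightarrow \gH_{\Sig,q},\qquad i_2(f)=f\!\restriction\!\Sig^c,
\]
is a continuous embedding (here the nonnegativity of $d\nu$ is used so that $\|f\|_{\gH_{\Sig,q}}\le\|f\|_{\gH_{\nu,q}}$). Hence the composition $i_1\circ i_2:\gH_{\nu,q}\hookrightarrow L^2(\R_+)$ is compact, and one more application of Rellich yields the discreteness of the spectrum of $\rH_{\nu,q}$. The one step that requires care is identifying the form closure of $\gt^0_{\nu,q}$ with the operator $\rH_{\nu,q}$ and confirming that the energy spaces involved in the two embeddings $i_1,i_2$ carry compatible norms; once the representation \eqref{4.10_form} is in hand, this reduces to bookkeeping already carried out in Lemma~\ref{lem8.10} and Corollary~\ref{lem_Embedding}.
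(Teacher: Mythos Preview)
Your argument is essentially the same as the paper's: factor the embedding $\gH_{\nu,q}\hookrightarrow L^2(\R_+)$ through $\gH_{\Sig,q}$, using Corollary~\ref{lem_Embedding} for the continuous first leg and Theorem~\ref{th:discr_N} (via Rellich) for the compact second leg. The paper's proof is terser and does not spell out the identification of the closed form with the operator $\rH_{\nu,q}$, whereas you make this explicit through Lemma~\ref{lem:3.2}(ii); since the closure $\overline{\gt^0_{\nu,q}}$ is in any case a restriction of the closed form $\gt_{\nu,q}$, the compactness of $\gH_{\nu,q}\hookrightarrow L^2(\R_+)$ already suffices for Rellich, so this extra care does no harm.
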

   \begin{proof}
By Lemma \ref{lem_semibound}, the form $\gt_{\Sig,q}$ is lower semibounded and, moreover, by Theorem \ref{th:discr_N}, the energy space $\gH_{\Sig,q}$ is compactly embedded into $L^2(\R_+)$. 
On the other hand, by Corollary \ref{lem_Embedding}, $\gH_{\nu,q}$ is continuously embedded into $\gH_{\Sig,q}$. 
Therefore, $\gH_{\nu,q}$ is compactly embedded into $L^2(\R_+)$ and by the Rellich theorem, the Hamiltonian $\rH_{\nu,q}$ has discrete spectrum. 
   \end{proof}

We can easily obtain the analog of Theorem \ref{th8.2} in the case of a finite interval.

   \begin{corollary}\label{cor8.14}
Let $(a,b)$ be a finite interval and $q\in L^1_{\loc}(a,b)$ satisfy \eqref{I_brinck}. Assume also that $d\nu$ is a nonnegative measure on $(a,b)$ with $|\Sig|=0$. Then   $\rH_{\nu, q}$ is lower
semibounded  and its spectrum is discrete whenever condition
\eqref{eq:4.9} is satisfied. 
   \end{corollary}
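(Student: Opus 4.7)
The plan is to mirror the proof of Theorem \ref{th8.2} line by line, substituting Corollary \ref{rem:8.10} for Theorem \ref{th:discr_N} so that Molchanov's condition \eqref{4.1_Molchanov} need not be separately assumed. The saving grace in the finite-interval setting is that the component intervals $\{\Delta_k\}$ are pairwise disjoint subsets of the bounded interval $(a,b)$, so $\sum_k d_k \le b-a < \infty$, which automatically gives both \eqref{eq:d*} and $d_k\to 0$. Thus the only hypothesis in Corollary \ref{rem:8.10} that must be imported from the statement is \eqref{I_brinck} on $q$.

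First, I would invoke Corollary \ref{cor:8.12} (applied on $(a,b)$ in place of $\R_+$): since $d\nu$ is nonnegative, $|\Sig|=0$, and $q$ satisfies \eqref{I_brinck}, the form $\gt_{\nu,q}$ defined by \eqref{eq:8.37} is lower semibounded and closed, with energy space $\gH_{\nu,q}$ given by the finite-interval analog of \eqref{eq:8.38}. In particular, the operator $\rH_{\nu,q}$ associated with $\gt_{\nu,q}$ is lower semibounded and self-adjoint. This takes care of the semiboundedness claim.

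Next, for the discreteness, I would use Corollary \ref{rem:8.10}: since $q$ satisfies \eqref{I_brinck} and \eqref{eq:4.9} holds by assumption, the Neumann realization $\rH_{\Sig,q}^N$ on $(a,b)$ has purely discrete spectrum. By the Rellich theorem this is equivalent to the embedding $\gH_{\Sig,q}\hookrightarrow L^2(a,b)$ being compact. Corollary \ref{lem_Embedding} (applied on $(a,b)$) provides a continuous embedding $i\colon \gH_{\nu,q}\hookrightarrow \gH_{\Sig,q}$, because $d\nu$ is nonnegative and therefore $\|f\|_{\gH_{\Sig,q}}\le \|f\|_{\gH_{\nu,q}}$ for every $f\in\gH_{\nu,q}$. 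Composing the continuous map $\gH_{\nu,q}\hookrightarrow \gH_{\Sig,q}$ with the compact map $\gH_{\Sig,q}\hookrightarrow L^2(a,b)$ yields a compact embedding $\gH_{\nu,q}\hookrightarrow L^2(a,b)$. A second application of Rellich then gives the discreteness of the spectrum of $\rH_{\nu,q}$.

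I do not expect a genuine obstacle here. The only point that merits a brief sanity check is that Corollary \ref{cor:8.12} and Corollary \ref{lem_Embedding}, although originally stated for $(a,b)=\R_+$, go through verbatim on a bounded interval because the component-interval decomposition and the form-sum argument are entirely local in character and use only the hypotheses $|\Sig|=0$, $d\nu\ge 0$, and \eqref{I_brinck}. Likewise, Corollary \ref{rem:8.10} is already formulated on a finite interval and does not require the $d_k\to 0$ assumption explicitly, since (as noted above) this is automatic here.
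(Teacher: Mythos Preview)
Your proposal is correct and follows precisely the approach the paper intends: the paper states Corollary~\ref{cor8.14} immediately after Theorem~\ref{th8.2} as its finite-interval analog without a separate proof, and your argument carries out exactly that adaptation---replacing Theorem~\ref{th:discr_N} by Corollary~\ref{rem:8.10} (since $\gd_k\to 0$ is automatic on a bounded interval) and composing the continuous embedding $\gH_{\nu,q}\hookrightarrow\gH_{\Sig,q}$ from Corollary~\ref{lem_Embedding} with the compact embedding $\gH_{\Sig,q}\hookrightarrow L^2(a,b)$ to invoke Rellich.
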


      \begin{example}
      Let $(a,b)= (0,1)$ and $d\nu$ be a nonnegative finite measure with $|\Sig|=0$. Then $\Sig^c = (0,1)\backslash \Sig= \cup_{k=1}^{\infty}
\Delta_k$, where $\Delta_k = (a_k,b_k)$, $k\in \N$. Define $q:\Sig^c\to\R_+$ by
\begin{equation}
q(x)=\sum_{k=1}^\infty\chi_{\Delta_k}(x)\frac{c_k}{\sqrt{(x-a_k)(b_k-x)}},\qquad x\in\Sig^c,
\end{equation}
where $\{c_k\}_{k=1}^\infty \in \ell^1(\N)$ is a sequence of positive numbers. Note that 
    \begin{equation}
\|q\|_{L^1(\Delta_k)}=\pi c_k,\qquad \|q\|_{L^1(0,1)}=\pi\sum_{k=1}^\infty c_k<\infty.
    \end{equation}
Therefore, the operator $\rH_{\nu,q}$ has discrete spectrum since both endpoints are regular. 
However, by Corollary \ref{rem:8.10}, the operator $\rH_{\Sig,q}^N$ has purely discrete spectrum if and only if 
    \begin{equation}
\frac{1}{d_k} \int_{\Delta_k}q(x) dx  =
\pi\frac{c_k}{\gd_k}\to\infty\quad \text{as}\quad
k\to\infty.
    \end{equation}
\end{example}
  \begin{remark}
It is interesting to compare Corollary \ref{cor8.14} with Theorem
\ref{th:sa} and Corollary \ref{cor:2.3} in the case $-\infty<a<b <
+\infty$. If $q\in L^{\infty}(a,b)$, then, in view of Corollary \ref{rem:8.10},
the Neumann realization $\rH^N_{\Sig,q}$ is not discrete although, by
Corollary \ref{cor:2.3}, $\rH_{\nu,q}$ is. This comparison shows that
in this case the perturbation $\nu[\cdot]$ (see \eqref{eq:nuform}) of the
form $\gt_{\Sig,q}$ does not preserve the essential spectrum of $\rH_{\Sig,q}^N$.
  \end{remark}

By Proposition \ref{prop4.5}, Molchanov's condition is not only sufficient but is also necessary for the discreteness of the spectrum of the operator $\rH_{\nu,q}$. Our next aim is to show that condition
\eqref{eq:4.9} is not necessary for the discreteness. Namely, we are going to show that there are cases when the spectrum of $\rH_{\nu,q}$  is discrete, however, the essential spectrum of the Neumann realization $\rH_{\Sig,q}^N$ might be nontrivial. 

We begin with the following simple auxiliary lemma.
 
  \begin{lemma}\label{embed_lem}
Let $d\nu$ be a nonnegative finite measure on $(0,a)$. Then ${W}^{1,2}((0,a);dP)$ is compactly embedded into $L^2(0,a)$.
  \end{lemma}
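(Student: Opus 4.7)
The plan is to deduce compactness from the two--component structure $dP = dx + d\nu$ of the defining measure. Given a bounded sequence $\{f_n\}$ in $W^{1,2}((0,a);dP)$, I fix a base point $y_0 \in (0,a)$ and work with the left-continuous representative of each $f_n$. The integral representation \eqref{eq:1_qd'} then yields
\[
f_n(x) = c_n + h_{1,n}(x) + h_{2,n}(x), \qquad x \in (0,a),
\]
where $c_n := f_n(y_0)$, $h_{1,n}(x) := \int_{y_0}^x f_n^{\qd}(t)\,dt$, and $h_{2,n}(x) := \int_{[y_0,x)} f_n^{\qd}(t)\,d\nu(t)$ (with the obvious sign convention for $x < y_0$). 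Applying Cauchy--Schwarz to each integral separately, together with the bound $\|f_n^{\qd}\|_{L^2((0,a);dP)}^2 = \|f_n^{\qd}\|_{L^2(dx)}^2 + \|f_n^{\qd}\|_{L^2(d\nu)}^2 \le C^2$, gives
\[
|h_{1,n}(x) - h_{1,n}(y)| \le C\sqrt{|x-y|}, \qquad |h_{2,n}(x_2) - h_{2,n}(x_1)| \le C\sqrt{\nu([x_1,x_2))}.
\]
Both families are then uniformly bounded in $L^\infty(0,a)$, and since $\|f_n\|_{L^2(0,a)} \le C$, the scalars $c_n$ are bounded as well.

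Compactness is then extracted piece by piece. The family $\{h_{1,n}\}$ is uniformly $\tfrac12$-H\"older continuous and uniformly bounded, so Arzel\`a--Ascoli supplies a subsequence converging uniformly on $[0,a]$, hence in $L^2(0,a)$. The family $\{h_{2,n}\}$ has uniformly bounded total variation, since
\[
\mathrm{TV}(h_{2,n}) \le \int_{(0,a)}|f_n^{\qd}|\,d\nu \le \sqrt{\nu((0,a))}\,\|f_n^{\qd}\|_{L^2(d\nu)} \le C,
\]
so by Helly's selection theorem a further subsequence converges pointwise everywhere (in particular a.e.) to some BV function; combined with the uniform $L^\infty$ bound, the bounded convergence theorem upgrades this to $L^2(0,a)$-convergence. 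Finally, a subsequence of the bounded sequence $\{c_n\} \subset \C$ converges, and assembling the three pieces produces a subsequence of $\{f_n\}$ convergent in $L^2(0,a)$.

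The main obstacle is the jump component $h_{2,n}$: because $d\nu$ may carry atoms (or a singular continuous part), the $f_n$ themselves are generally discontinuous, so a direct Arzel\`a--Ascoli argument applied to $\{f_n\}$ is unavailable, and the Cauchy--Schwarz estimate on $h_{2,n}$ does not yield equicontinuity since $\nu([x_1,x_2))$ admits no uniform modulus of continuity in $x_2-x_1$. Splitting off the absolutely continuous and $d\nu$-driven contributions and treating the latter via Helly's theorem plus bounded convergence, rather than via equicontinuity, is precisely what circumvents this difficulty.
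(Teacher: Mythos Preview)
Your proof is correct, and it takes a genuinely different route from the paper's. The paper disposes of compactness in a single line by invoking spectral theory: once continuity of the embedding is shown via the Cauchy--Schwarz estimate
\[
|f(x)| \le |f(0)| + \sqrt{P(a)}\,\|f^{\qd}\|_{L^2((0,a);dP)},
\]
the authors simply observe that a regular Sturm--Liouville operator (both endpoints regular, finite measure $d\nu$, integrable $q$) has purely discrete spectrum, hence compact resolvent, and this forces the form-domain embedding to be compact by the Rellich criterion.

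Your argument, by contrast, is entirely self-contained and elementary: you split $f_n$ into its Lebesgue-absolutely-continuous part $h_{1,n}$ and its $d\nu$-driven part $h_{2,n}$, then apply Arzel\`a--Ascoli to the first (uniform H\"older continuity) and Helly's selection theorem plus dominated convergence to the second (uniformly bounded variation, uniform $L^\infty$ bound). This avoids any appeal to the spectral machinery developed earlier in \cite{measureSL}, and it makes transparent exactly where the finiteness of $d\nu$ enters (the total-variation bound $\sqrt{\nu((0,a))}\,\|f_n^{\qd}\|_{L^2(d\nu)}$). The paper's approach is shorter given the surrounding theory; yours would survive transplantation to a setting where that theory is not yet available. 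Interestingly, the paper's subsequent Remark notes that when $d\nu$ is singular continuous one \emph{can} run Arzel\`a--Ascoli directly on the $f_n$ themselves --- your decomposition explains why this fails in general and how Helly repairs it.
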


    \begin{proof}
Each $f\in {W}^{1,2}\bigl((0,a);dP\bigr)$ is a function of bounded variation and hence $f\in L^2(0,a)$. Moreover, using the representation \eqref{eq:1_qd'} and the Cauchy--Schwarz inequality, we get
\begin{align}\begin{split}
|f(x)|\le |f(0)|+\int_0^x|f^{\qd}|dP&\le |f(0)|+ \sqrt{P(x)}\|f^{\qd}\|_{L^2((0,x);dP)}\\
&\le |f(0)|+ \sqrt{P(a)}\|f\|^2_{W^{1,2}((0,a);dP)}.
\end{split}\end{align}
This estimates shows that the embedding is continuous. Compactness follows from the discreteness of spectra of Hamiltonians with two regular endpoints.
    \end{proof}

   \begin{remark}
If $\nu$  is singular continuous, then every $f\in W^{1,2}((0,a);dP)$ is continuous. Moreover, the Arzel\'a--Ascoli theorem shows that the unit ball in $W^{1,2}((0,a);dP)$ is compact in $C[0,a]$. Therefore,
the embedding  $W^{1,2}\bigl((0,a);dP\bigr)\hookrightarrow
L^2(0,a)$  is continuous and compact.
      \end{remark}

   \begin{theorem}\label{prop8.18}
Let $d\nu$ be a nonnegative measure on $\R_+$ with $|\Sig|=0$ such that
\be\label{eq:8.44}
0\le\nu_n:=d\nu([n-1,n])\le C_\nu<\infty,\qquad n\in\N,
\ee
for some constant $C_\nu>0$ independent of $n\in\N$. 
Let  $q$ satisfy conditions \eqref{I_brinck}
and \eqref{4.1_Molchanov}.  If $\Sig^c=\Sig_1^c\cup \Sig_2^c$, where $\Sig_1^c=\cup_{k\in\N} \Delta_k'$  and $\Sig_2^c=\cup_{k\in\N} \Delta_k''$ are such that
\begin{equation}\label{8.56}
\sum_{k=1}^\infty \gd_k'<\infty\qquad \text{and}\qquad \lim_{k\to\infty}\frac{1}{\gd_k''}\int_{\Delta_k''}q(x)dx=\infty, 
\end{equation}
where $\gd_k'=|\Delta_k'|$ and $\gd_k''=|\Delta_k''|$, $k\in\N$,
then the operator $\rH_{\nu,q}$ is lower semibounded and its spectrum is discrete.
    \end{theorem}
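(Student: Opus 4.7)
The plan is: first, identify $\rH_{\nu,q}$ as the self-adjoint operator associated with the closed lower-semibounded form $\gt_{\nu,q}$ of Corollary~\ref{cor:8.12} (applicable since $d\nu\ge 0$ and $q$ satisfies \eqref{I_brinck}), with the identification coming from Lemma~\ref{lem:3.2}(ii) (applicable since $|\Sig|=0$); second, prove discreteness by showing that the embedding $\gH_{\nu,q}\hookrightarrow L^2(\R_+)$ is compact and invoking the Rellich theorem. Shifting $q$ by a constant, I may assume $q\ge 1$ on $\R_+$, so every $f$ in the unit ball $\mathbb{U}\subset\gH_{\nu,q}$ satisfies $\gt_{\nu,q}[f]\le 1$ and $\|f\|_{L^2(\R_+)}\le 1$.

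The crucial new ingredient is a uniform $L^\infty$-bound on $\mathbb{U}$. For each $n\in\N$, the inequality $\int_{[n-1,n]}|f|^2\,dx\le 1$ yields some $\xi_n\in [n-1,n]\cap\Sig^c$ with $|f(\xi_n)|^2\le 1$, and the identity $f(x)-f(\xi_n)=\int_{[\xi_n,x)}f^\qd\,dP$ combined with the Cauchy--Schwarz inequality and the bound $P([n-1,n])\le 1+C_\nu$ from \eqref{eq:8.44} gives
\begin{align*}
\|f\|_{L^\infty([n-1,n])}^2\le 2+2(1+C_\nu)\int_{[n-1,n]}|f^\qd|^2\,dP\le 2(2+C_\nu)=:M^2.
\end{align*}
Taking the supremum over $n$ yields $\|f\|_{L^\infty(\R_+)}\le M$ uniformly on $\mathbb{U}$.

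Fix $\eps>0$. I would split the $L^2$-mass of $f$ into a head on $[0,N]$ and two tails on $\Sig_1^c\cap(N,\infty)$ and $\Sig_2^c\cap(N,\infty)$. The tail over $\Sig_1^c$ is controlled directly by the $L^\infty$-bound and $\sum_k\gd_k'<\infty$:
\begin{align*}
\|f\|_{L^2(\Sig_1^c\cap(N,\infty))}^2\le M^2\sum_{k:\,\Delta_k'\cap(N,\infty)\neq\emptyset}\gd_k'<\eps
\end{align*}
for $N$ large. The tail over $\Sig_2^c$ is handled by replaying the proof of Theorem~\ref{th:discr_N} on $\cup_k\Delta_k''$: the hypotheses \eqref{4.1_Molchanov} and \eqref{8.56} give the two bounds \eqref{3.7}--\eqref{3.12} applied separately to $\Delta_k''$ of small and large length, producing $\|f\|_{L^2(\Sig_2^c\cap(N,\infty))}^2<\eps$ for $N$ large, where I use Corollary~\ref{lem_Embedding} to bound $\gt_{\Sig,q}[f]$ by $\gt_{\nu,q}[f]\le 1$. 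The head on $[0,N]$ is compactly embedded in $L^2(0,N)$ by Lemma~\ref{embed_lem}, since $d\nu|_{[0,N]}$ is finite by \eqref{eq:8.44} and the restriction of $\mathbb{U}$ lies in a bounded subset of $W^{1,2}((0,N);dP)$. Combining the three pieces, $\mathbb{U}$ is totally bounded in $L^2(\R_+)$.

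The main obstacle is that Theorem~\ref{th:discr_N} cannot be applied directly to $\rH_{\Sig,q}^N$, because \eqref{eq:4.9} may fail on the bad intervals $\Delta_k'$ (so that the Neumann realization $\rH_{\Sig,q}^N$ may have nontrivial essential spectrum). The key observation that resolves this is that the perturbation $\nu[\cdot]$ in the form, under the uniform local-finiteness \eqref{eq:8.44} together with $q\ge 1$, produces a uniform $L^\infty$-bound on $\mathbb{U}$, and the failure of \eqref{eq:4.9} on $\Delta_k'$ is then absorbed through the summability $\sum_k\gd_k'<\infty$.
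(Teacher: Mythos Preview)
Your proposal is correct and follows essentially the same approach as the paper's proof: reduce to $q\ge 1$, establish the uniform $L^\infty$-bound on the unit ball via \eqref{eq:8.44} and the mean-value/Cauchy--Schwarz argument, control the $\Sig_1^c$-tail by this bound together with $\sum_k\gd_k'<\infty$, control the $\Sig_2^c$-tail by importing the estimates \eqref{3.7}--\eqref{3.12} from the proof of Theorem~\ref{th:discr_N}, handle the head by Lemma~\ref{embed_lem}, and conclude via Rellich. The only cosmetic difference is that you split by position (head $[0,N]$ versus tails on $(N,\infty)$) while the paper splits by index (intervals $\Delta_k'$, $\Delta_k''$ with $k\ge p$), but these are equivalent since all component intervals of $\Sig^c$ are bounded.
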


    \begin{proof}
  By Lemma \ref{lem8.10}, the form $\gt_{\nu,q}$ is lower semibounded and closed. 
Without loss of generality we assume that $q\ge 1$ on $\R_+$. Let us show that the unit ball
 $\mathbb{U}_{\nu, q}:= \{f\in \gH_{\nu,q}: \ \gt_{\nu,q}[f]\le 1\}$ of $\gH_{\nu,q}$
 is compact in $L^2({\R}_+)$. 
Let  $\varepsilon>0$. As in the proof of Theorem \ref{th:discr_N},
we obtain the estimates   \eqref{3.7} and
\eqref{3.12} for the set $\Sig_2^c$. It remains to evaluate the integrals
$\int_{\Delta_{k}'}|f(x)|^2dx$ over the intervals $\Delta_k'$.
Note that (cf. the proof of Lemma \ref{embed_lem})
     \begin{equation}\label{8.59A}
\frac{1}{2}|f(x)|^2\le |f(y)|^2 + (1 + \nu_n)
\int^n_{n-1}|f^{[1]}(t)|^2 dP(t),
     \end{equation}
for all $x,y\in [n-1,n]$.
Since $f$ is locally of bounded variation on $\R_+$, there exists $x_n\in[n-1,n]$ such that
    \begin{equation}
|f(x_n)|^2 \le  \int^n_{n-1}|f(t)|^2 dt,  \qquad n\in \N.
    \end{equation}
Combining this relation with \eqref{8.59A} and using \eqref{eq:8.44}, we get
    \begin{align}\label{eq:8.47}
    \begin{split}
\frac{1}{2}|f(x)|^2 &\le \|f\|^2_{L^2(n-1,n)} + (1+\nu_n)\|f^\qd\|^2_{L^2([n-1,n];dP)}\\
&\le \|f\|^2_{L^2(\R_+)} + (1+C_\nu)\|f^\qd\|^2_{L^2(\R_+;dP)}\le 1+C_\nu,
\end{split}
 \end{align}
 which holds for all $x\in \R_+$ and $f\in \mathbb{U}_{\nu, q}$. 
 Next, in view of the first condition in \eqref{8.56}, there is $p_3\in\N$ such that $\sum_{k\ge p_3}\gd_k'< \varepsilon $. Therefore, \eqref{eq:8.47} implies
   \begin{equation}\label{eq:8.48}
\sum^{\infty}_{k = p_3} \|f\|^2_{L^2(\Delta_k')}
\le 2(1+C_{\nu}) \sum^{\infty}_{k = p_3}\gd'_k < 2\varepsilon(1+C_{\nu}), \quad f\in
\mathbb{U}_{\nu, q}.
   \end{equation}
Setting $p = \max\{p', p'', p_3\}$ and  combining \eqref{eq:8.48}
with \eqref{3.7} and \eqref{3.12} we get
  \begin{equation}\label{8.63}
\sum^{\infty}_{k = p} \|f\|^2_{L^2(\Delta_k')}  +  \sum^{\infty}_{k = p} \|f\|^2_{L^2(\Delta_k'')} < \tilde{C} \varepsilon, \qquad f\in
\mathbb{U}_{\nu, q},
     \end{equation}
showing that the "tails" of functions $f\in \mathbb{U}_{\nu, q}$ are
uniformly small.

Let $a=a(\varepsilon):=\max\{x:x\in\cup_{k=1}^{p-1} (\Delta_k'\cup \Delta_k'')\}$. By Lemma  \ref{embed_lem},  
$W^{1,2}((0,a);dP)$ is compactly embedded into $ L^2(0,a)$. 
On the other hand,  since $|\Sig|=0$, 
 \eqref{8.63} implies 
  \begin{equation}\label{8.66A}
\|f\|^2_{L^2(a,+\infty)} \le \sum^{\infty}_{k = p} \|f\|^2_{L^2(\Delta_k')}  +  \sum^{\infty}_{k = p} \|f\|^2_{L^2(\Delta_k'')} < \tilde{C} \varepsilon, \quad f\in \mathbb{U}_{\nu, q}.
     \end{equation}

Denoting by  $\mathbb{U}_{\nu, q}^a$ the unit ball in
$W^{1,2}((0,a);dP)$ and using \eqref{8.66A},
we conclude that $i(\mathbb{U}_{\nu, q}^a)$ forms a compact
$\varepsilon$-net for the set $i(\mathbb{U}_{\nu, q})$ in
$L^2(\R_+)$. Hence the set $i(\mathbb{U}_{\nu, q})$  is compact in
$L^2(\R_+)$. The Rellich theorem completes the proof.
   \end{proof}

\begin{remark}
Theorem  \ref{prop8.18} is also valid for discrete
measures $d\nu$ and, in particular, it is valid for operators with
$\delta'$-point interactions (see Example \ref{ex:delta'}). Note that for Hamiltonians with $\delta'$-point interactions
a different sufficient condition guaranteeing that the operator
$H_{\nu,q}$ has  discrete spectrum while the Neumann realization
$\rH_{\Sig,q}^N$ has not, is contained in \cite[Proposition
3.10]{KosMal12}.
   \end{remark}

\begin{corollary}\label{cor8.21}
Let $q\in L^1_{\loc}(\R_+)$ satisfy 
\eqref{I_brinck}.
Assume that $\{r_{n}\}_{n\in \N}\subset\R_+$ is a strictly increasing sequence such that
   \begin{equation}\label{8.64}
\sum^{\infty}_{n=1}|I_{2k}|<\infty,\qquad I_k:=[r_{k-1},r_{k}],\ \ |I_k|=r_{k}-r_{k-1}.
  \end{equation}
Let also  $d\nu$ be nonnegative and such that $|\Sig|=0$, $\{r_k\}_{k=1}^\infty\subseteq \Sig$, $\Sig \subset \cup_{k\in\N} I_{2k}$ and 
\begin{align}
\sup_{k\in\N}d\nu(I_{2k})=C_\nu<\infty.
\end{align}
Then the spectrum of $\rH_{\nu,q}$ is discrete
if   $q$ satisfies Molchanov's condition \eqref{4.1_Molchanov} and
  \begin{equation}\label{8.66}
\lim_{k\to\infty}\frac{1}{|I_{2k+1}|}\int_{I_{2k+1}}q(x)dx = \infty.
  \end{equation}
   \end{corollary}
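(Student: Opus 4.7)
The plan is to apply Theorem~\ref{prop8.18} by verifying its decomposition hypothesis~\eqref{8.56} in our setting, with an adaptation to accommodate the weaker assumption on $d\nu$. Since $\{r_k\}\subset\Sigma\subset\bigcup_k I_{2k}$, each open odd interval $(r_{2k},r_{2k+1})$ is disjoint from $\Sigma$ and therefore forms a component of $\Sigma^c$; I set $\Sigma_2^c:=\bigcup_{k\ge 0}(r_{2k},r_{2k+1})$, whose components $\Delta_k''$ have length $|I_{2k+1}|$ and satisfy $\lim_k |\Delta_k''|^{-1}\int_{\Delta_k''}q\,dx=\infty$ by hypothesis. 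The remaining components $\Delta_k'$ of $\Sigma^c$ lie entirely inside $\bigcup_k I_{2k}$, so $\sum_k|\Delta_k'|\le\sum_k|I_{2k}|<\infty$ by~\eqref{8.64}. Thus~\eqref{8.56} holds.

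Lower semiboundedness and closedness of $\gt_{\nu,q}$ follow from the nonnegativity of $d\nu$ combined with~\eqref{I_brinck} (cf.\ Lemma~\ref{lem8.10}); assume $q\ge 1$ without loss of generality. Following the template of the proof of Theorem~\ref{prop8.18}, compactness of the unit ball $\mathbb{U}_{\nu,q}\subset\gH_{\nu,q}$ in $L^2(\R_+)$ reduces to three ingredients: (i)~a tail estimate $\sum_{k\ge K}\|f\|^2_{L^2(\Delta_k'')}\to 0$ as $K\to\infty$ on $\Sigma_2^c$, which follows from the bounds~\eqref{3.7}--\eqref{3.12} via Molchanov's condition and the divergent $q$-averages on the odd intervals; (ii)~the analogous tail estimate on $\Sigma_1^c$; and (iii)~local compactness on any bounded initial interval by Lemma~\ref{embed_lem}.

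The main obstacle is step~(ii): the uniform pointwise bound~\eqref{eq:8.47} used in Theorem~\ref{prop8.18}, which is derived there from $\sup_n d\nu([n-1,n])\le C_\nu$, is no longer available from the weaker per-interval bound $\sup_k d\nu(I_{2k})\le C_\nu$. The strategy is to replace it by the local estimate
\[
|f(x)|^2 \le 2|f(y)|^2 + 2\,dP([y,x))\,\|f^{[1]}\|^2_{L^2([y,x);dP)},\qquad x\in I_{2m},\ y\in I_{2m-1},
\]
in which $dP([y,x))\le|I_{2m-1}|+|I_{2m}|+C_\nu$, since $I_{2m-1}$ carries no $\nu$-mass. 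The reference point $y$ is selected by the weighted mean-value inequality $\inf_{I_{2m-1}}|f|^2\le\bigl(\int_{I_{2m-1}}q\,dx\bigr)^{-1}\int_{I_{2m-1}}q|f|^2\,dx$, which makes $|f(y)|^2$ small for $m$ large because the averages $|I_{2m-1}|^{-1}\int_{I_{2m-1}}q\,dx$ diverge; the case of small $|I_{2m-1}|$ is handled by enlarging the reference window using Molchanov's condition. Integrating the resulting pointwise bound over $x\in I_{2m}$ and summing over $m\ge M$, the tail vanishes as $M\to\infty$ thanks to $\sum_k|I_{2k}|<\infty$. Combining (i)--(iii) yields compactness of $\gH_{\nu,q}\hookrightarrow L^2(\R_+)$, and discreteness of $\rH_{\nu,q}$ follows from the Rellich theorem.
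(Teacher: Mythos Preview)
Your decomposition is exactly the paper's: set $\Delta_k'' = (r_{2k},r_{2k+1})$ (the interiors of the odd intervals, which are full components of $\Sigma^c$ since $r_k\in\Sigma$ and $\Sigma\subset\bigcup I_{2k}$) and let $\{\Delta_k'\}$ enumerate the components of $\bigcup_k I_{2k}\setminus\Sigma$. Then $\sum_k|\Delta_k'|\le\sum_k|I_{2k}|<\infty$ by~\eqref{8.64}, and the second half of~\eqref{8.56} is precisely~\eqref{8.66}. At this point the paper simply writes ``It remains to apply Theorem~\ref{prop8.18}'' and stops.

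You go further, and for a legitimate reason. You correctly notice that hypothesis~\eqref{eq:8.44} of Theorem~\ref{prop8.18}, namely $\sup_n d\nu([n-1,n])<\infty$, is \emph{not} implied by the per-block bound $\sup_k d\nu(I_{2k})\le C_\nu$: if the odd intervals $I_{2k+1}$ are allowed to be arbitrarily short, a single unit interval $[n-1,n]$ can meet arbitrarily many even blocks, each carrying mass up to $C_\nu$. The paper passes over this point. Your remedy---re-running the proof of Theorem~\ref{prop8.18} with the partition $\{I_{2m-1}\cup I_{2m}\}$ in place of $\{[n-1,n]\}$, bounding $dP([y,x))\le |I_{2m-1}|+|I_{2m}|+C_\nu$ for $y\in I_{2m-1}$, $x\in I_{2m}$, and controlling the anchor value $|f(y)|^2$ by the $q$-weighted mean-value inequality on $I_{2m-1}$ (falling back on Molchanov's condition over a window of fixed length $\varepsilon$ when $|I_{2m-1}|$ is short)---is a sound adaptation; after integrating over $x$ and summing in $m$, the factor $|I_{2m}|$ from~\eqref{8.64} makes the tail small. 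So your argument is correct and strictly more careful than the paper's one-line invocation of Theorem~\ref{prop8.18}; what the paper gains in brevity it loses in not checking~\eqref{eq:8.44}, which your approach circumvents rather than verifies.
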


  \begin{proof}
Noting that $I_{2k}\backslash \Sig=\cup^{\infty}_{j=1}\Delta_{j,
k}$, where $\Delta_{j, k}$ are pairwise disjoint, we set $\{\Delta_k'\}^{\infty}_{k=1}:= \{\Delta_{j,
n}\}^{\infty}_{j,n=1}$, and $\Delta_k''=I_{2k+1}$. Clearly, the first condition in \eqref{8.56} is
implied by \eqref{8.64}. Moreover, the second one is equivalent to \eqref{8.66}.
It remains to apply Theorem \ref{prop8.18}.
   \end{proof}

   \begin{corollary}\label{cor8.22}
Assume the conditions of Corollary \ref{cor8.21}. Let also 
   \begin{equation}\label{8.64A}
 0<\varepsilon_0 = \inf_{n} |I_{2n+1}| \le  \sup_{n} |I_{2n+1}| = \varepsilon_1<\infty.
     \end{equation}
 Then the spectrum of $\rH_{\nu,q}$ is discrete if and only if
$q$ satisfies condition \eqref{4.1_Molchanov}.
   \end{corollary}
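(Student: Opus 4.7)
The plan is to prove both implications by reducing them to the earlier results in the paper, exploiting how the two-sided bound $0<\eps_0\le |I_{2n+1}|\le\eps_1<\infty$ quantitatively bridges Molchanov's condition and the averaged hypothesis \eqref{8.66} of Corollary \ref{cor8.21}.

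For the sufficiency, assume $q$ satisfies \eqref{4.1_Molchanov}. I plan to verify \eqref{8.66} and then invoke Corollary \ref{cor8.21}. Since $I_{2k+1}=[r_{2k},r_{2k+1}]$ has length at least $\eps_0$, one has $[r_{2k},r_{2k}+\eps_0]\subseteq I_{2k+1}$. Splitting $q=q_+ - q_-$ and using \eqref{I_brinck} to bound the negative contribution over the leftover piece $(r_{2k}+\eps_0,r_{2k+1})\subseteq I_{2k+1}$ by a constant multiple of $|I_{2k+1}|\le\eps_1$, I obtain
\begin{equation}
\int_{I_{2k+1}}q(x)\,dx \;\ge\; \int_{r_{2k}}^{r_{2k}+\eps_0}q(x)\,dx \;-\; C_0\eps_1,
\end{equation}
whose right-hand side tends to $+\infty$ as $k\to\infty$ by Molchanov's condition applied with $\eps=\eps_0$ (using $r_{2k}\to\infty$). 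Dividing by $|I_{2k+1}|\le\eps_1$ establishes \eqref{8.66}, so Corollary \ref{cor8.21} yields discreteness of $\sigma(\rH_{\nu,q})$.

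For the necessity, assume $\sigma(\rH_{\nu,q})$ is purely discrete. I plan to invoke Proposition \ref{prop4.5}, whose hypotheses I verify in turn. The assumption $|\Sig|=0$ and the integrability condition \eqref{I_brinck} on $q$ are given. Condition \eqref{eq:d*} holds because every connected component of $\Sig^c$ is either contained in some $I_{2k}$ (hence of length at most $|I_{2k}|$, and $|I_{2k}|\to 0$ in view of \eqref{8.64}) or coincides with some $I_{2k+1}$ (of length at most $\eps_1$); consequently $\gd^*(\Sig)\le\max\{\sup_n|I_{2n}|,\eps_1\}<\infty$. Lower semiboundedness of the form $\gt_{\nu,q}=\gt_{\Sig,q}^0+\nu[\,\cdot\,]$ follows because $\gt_{\Sig,q}^0$ is lower semibounded by Lemma \ref{lem_semibound} while $\nu[\,\cdot\,]\ge 0$ as $d\nu$ is nonnegative; combined with $|\Sig|=0$, Lemma \ref{lem:3.2}(ii) together with Theorem \ref{th:gpw} then yield that $\rH_{\nu,q}$ is self-adjoint and lower semibounded. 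Proposition \ref{prop4.5} now forces Molchanov's condition \eqref{4.1_Molchanov}.

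I do not anticipate any substantive technical obstacle; the argument is essentially bookkeeping on results already established. The one conceptual observation is that it is precisely the two-sided bound on $|I_{2n+1}|$ which makes \eqref{8.66} equivalent to \eqref{4.1_Molchanov}: the lower bound $\eps_0$ powers sufficiency, while the upper bound $\eps_1$ (together with $\sum|I_{2k}|<\infty$) ensures the hypothesis $\gd^*(\Sig)<\infty$ needed to apply Proposition \ref{prop4.5} in the necessity direction.
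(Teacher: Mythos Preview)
Your proof is correct and follows the same approach as the paper: necessity via Proposition~\ref{prop4.5}, and sufficiency by deriving \eqref{8.66} from Molchanov's condition together with the bounds~\eqref{8.64A} and then invoking Corollary~\ref{cor8.21}. You simply fill in details the paper leaves implicit (the explicit estimate using \eqref{I_brinck} to control the $q_-$ contribution, and the verification of \eqref{eq:d*} and lower semiboundedness needed for Proposition~\ref{prop4.5}).
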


  \begin{proof}
Note that \eqref{4.1_Molchanov} is necessary for the discreteness in view of Proposition \ref{prop4.5}.
Therefore, by Corollary \ref{cor8.21}, it suffices to show that $q$ satisfies \eqref{8.66}. However, it immediately follows from Molchanov's condition \eqref{4.1_Molchanov} and \eqref{8.64A}.
   \end{proof}

As an immediate corollary we obtain the following result.

\begin{corollary}\label{8.28}
Assume the
conditions of Corollary \ref{cor8.22}. Then the spectrum of $\rH_{\nu,q}$ is discrete if

      \begin{equation}\label{8.54}
 \lim_{x\to \infty}q(x) = +\infty.
     \end{equation}
   \end{corollary}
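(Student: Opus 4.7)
The entire substance has already been accomplished in Corollary \ref{cor8.22}, which asserts that under the standing hypotheses the spectrum of $\rH_{\nu,q}$ is discrete if and only if $q$ satisfies Molchanov's condition \eqref{4.1_Molchanov}. My plan is therefore simply to verify that the pointwise divergence \eqref{8.54} forces the integral condition \eqref{4.1_Molchanov}, and then to invoke Corollary \ref{cor8.22} directly.

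For the verification I fix an arbitrary $\eps>0$ together with an arbitrary $M>0$. By \eqref{8.54} there exists $X=X(M)$ such that $q(t)\ge M$ for (almost every) $t\ge X$, and hence
\[
\int_x^{x+\eps} q(t)\,dt \ge M\eps, \qquad x\ge X.
\]
Since $M$ was arbitrary, this shows $\int_x^{x+\eps} q(t)\,dt \to +\infty$ as $x\to\infty$, i.e.\ Molchanov's condition \eqref{4.1_Molchanov} is satisfied for every $\eps>0$. The conclusion then follows by a single application of Corollary \ref{cor8.22}.

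There is no genuine obstacle to this plan: once one reads the uniform lower bound on $q$ off from \eqref{8.54}, the Molchanov integral is bounded below by a constant multiple of $M$, and the rest is supplied by the equivalence already established in Corollary \ref{cor8.22}. The corollary is thus a trivial but convenient specialization, whose only role is to replace the Molchanov hypothesis by the more transparent assumption \eqref{8.54}.
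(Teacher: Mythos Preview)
Your proposal is correct and matches the paper's intended argument: the paper presents this as an immediate corollary of Corollary~\ref{cor8.22} without giving any proof, and your verification that \eqref{8.54} implies Molchanov's condition \eqref{4.1_Molchanov} is exactly the missing one-line check.
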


  \begin{remark}
If $\Sig$ is a discrete
set, i.e. $\rH_{\nu,q}$ is the Hamiltonian  with $\delta'$-point
interactions, then condition \eqref{8.54} (without  additional  assumption
\eqref{8.64A}) is sufficient for the discreteness of $\rH_{\nu,q}$ (see \cite[Corollary 3.8]{KosMal12}). However, if
$\Sig$ is not discrete, then condition \eqref{8.54} does
not guarantee the discreteness of $\rH_{\nu,q}$.
  \end{remark}


\begin{thebibliography}{XX}

\bibitem{aghh}
S.\ Albeverio, F.\ Gesztesy, R.\ H{\o}egh-Krohn, and H.\ Holden, {\em Solvable Models in Quantum Mechanics}, 2nd ed.,
AMS Chelsea Publishing, Providence, RI, 2005.

\bibitem{AKM_10}
S.\ Albeverio, A.\ Kostenko, and M.\ Malamud,
{\em Spectral theory of semi-bounded  Sturm--Liouville  operators with local interactions on a discrete set}, J.\ Math.\ Phys.\ \textbf{51} (2010), 102102, 24 pp.

\bibitem{AKMN13}
S.\ Albeverio, A.\ Kostenko,  M.\ Malamud and H.\ Neidhardt,
{\em Spherical Schr\"odinger operators with $\delta$-type interactions}, J.\ Math.\ Phys.\ \textbf{54} (2013), 052103, 24 pp.

\bibitem{albniz03}
S. \ Albeverio and L.\ Nizhnik, {\em On the number of negative eigenvalues of a one-dimensional Schr\"odinger operator with point interactions}, Lett.\ Math.\ Phys.\ {\bf 65} (2003), 27--35.

\bibitem{albniz}
S.\ Albeverio and L.\ Nizhnik, {\em A Schr\"odinger operator with a $\delta'$-interaction on a Cantor set and Krein--Feller operators}, Math.\ Nachr.\ {\bf 279} (2006), no.~5-6, 467--476.

\bibitem{BehLanLot12}
J.\ Behrndt, M.\ Langer, and V.\ Lotoreichik, {\em Schr\"odinger operators with $\delta$ and $\delta'$--potentials supported on hypersurfaces}, Ann.\ Henri Poincar\'e {\bf 14} (2013), 385--423.

\bibitem{ben}
C.\ Bennewitz, {\em Spectral asymptotics for Sturm--Liouville equations},
Proc.\ London Math.\ Soc.\ (3) {\bf 59} (1989), no.~2, 294--338.

\bibitem{bare}
A.\ Ben Amor and C.\ Remling,  {\em Direct and inverse spectral theory of one-dimensional Schr\"odinger operators with measures},
Int.\ Equat.\ Oper.\ Theory {\bf 52} (2005), no.~3, 395--417.

\bibitem{bo}
V.\ I.\ Bogachev, {\em Measure Theory. I}, Springer-Verlag, Berlin, Heidelberg, 2007.

\bibitem{boII}
V.\ I.\ Bogachev, {\em Measure Theory. II}, Springer-Verlag, Berlin, Heidelberg, 2007.

\bibitem{braniz}
J.\ F.\ Brasche and L.\ Nizhnik, {\em One-dimensional Schr\"odinger operators with $\delta'$-interactions on a set of Lebesgue measure zero}, Oper.\ Matrices {\bf 7} (2013), 887--904. 

\bibitem{bsw}
D.\ Buschmann, G.\ Stolz, and J.\ Weidmann, {\em One-dimensional Schr\"odinger operators with local point interactions}, J.\ reine Angew.\ Math. \textbf{467} (1995), 169--186.

\bibitem{cs}
T.\ Cheon and T.\ Shigehara, {\em  Realizing discontinuous wave functions with renormalized short--range
potentials},  Phys. Lett. A {\bf 243} (1998), 111--116.

\bibitem{TimeScales}
J.\ Eckhardt and G.\ Teschl, {\em On the connection between the Hilger and Radon--Nikodym derivatives}, J.\ Math.\ Anal.\ Appl.\ {\bf 385} (2012), 1184--1189.

\bibitem{measureSL}
J.\ Eckhardt and G.\ Teschl, {\em Sturm--Liouville operators with measure-valued coefficients}, J. d'Analyse Math. {\bf 120} (2013), 151--224.

\bibitem{EGNT}
J.\ Eckhardt, F.\ Gesztesy, R.\ Nichols, and G.\ Teschl, {\em Weyl--Titchmarsh theory for Sturm--Liouville operators with distributional potentials}, Opuscula Math.\ {\bf 33} (2013), 467--563.

\bibitem{EGNT2}
J.\ Eckhardt, F.\ Gesztesy, R.\ Nichols, and G.\ Teschl, {\em Supersymmetry and Schr\"odinger-type operators with distributional matrix-valued potentials}, J.\ Spectr.\ Theory (to appear), \arxiv{1206.4966}.

\bibitem{EGNT3}
J.\ Eckhardt, F.\ Gesztesy, R.\ Nichols, and G.\ Teschl, {\em Inverse spectral theory for Sturm--Liouville operators with distributional potentials}, J.\ Lond.\ Math.\ Soc.\ (2) {\bf 88} (2013), 801--828.

\bibitem{enz}
P.\ Exner, H.\ Neidhardt, and V.\ Zagrebnov, {\em Potential approximations to $\delta'$: an inverse Klauder
phenomenon with norm-resolvent convergence}, Comm. Math. Phys. {\bf 224} (2001), 593--612.

\bibitem{fle}
A.\ Fleige, {\em Spectral Theory of Indefinite Krein--Feller Differential Operators}, Math.\ Research {\bf 98}, Akademie Verlag, Berlin, 1996.

\bibitem{gh}
F.\ Gesztesy and H.\ Holden, {\em A new class of solvable models in quantum mechanics describing
point interactions on the line}, J.\ Phys.\ A: Math.\ Gen.\ {\bf 20} (1987), 5157--5177.

\bibitem{GolOri10}
N.\ I.\ Goloshchapova and L.\ L.\ Oridoroga, {\em On the negative spectrum  of  one-dimensional Schr\"odinger operators  with point interactions}, Int.\ Equat.\ Oper.\ Theory {\bf 67} (2010), 1--14.

\bibitem{GolHry}
Yu.\ D.\ Golovaty and R.\ O.\ Hryniv, {\em Norm resolvent convergence of singularly scaled
Schr\"odinger operators and $\delta'$-potentials}, Proc. Roy. Soc. Edinburgh A {\bf 143} (2013), 791--816.

\bibitem{GolMan}
Yu.\ D.\ Golovaty and S.\ S.\ Man'ko, {\em Solvable models for the Schr\"odinger operators with $\delta'$-like
potentials}, Ukr. Math. Bull. {\bf 6} (2009), no.2, 179--212.

\bibitem{ghm}
A.\ Grossmann, R.\ Hoegh-Krohn, and M.\ Mebkhout, {\em The one--particle theory of periodic point
interactions}, J.\ Math.\ Phys.\ {\bf 21} (1980), 2376--2385.

\bibitem{hrymyk12}
R.\ O.\ Hryniv and Ya.\ V.\ Mykytyuk, {\em Self-adjointness of Schr\"odinger operators with singular potentials}, Methods Funct.\ Anal.\ Topology {\bf 18} (2012), 152--159.

\bibitem{kac}
I.\ S.\ Kac, {\em The existence of spectral functions of generalized second order differential systems with boundary conditions at the singular end},
Amer.\ Math.\ Soc.\ Transl.\ Ser.\ 2, {\bf 62} (1967), 204--262.

\bibitem{kakr74}
I.\ S.\ Kac and M.\ G.\ Kre\u{\i}n, {\em On the spectral functions of the string}, Amer.\ Math.\ Soc.\ Transl.\ Ser.\ 2, {\bf 103} (1974), 19--102.

\bibitem{Kato66}
T.\ Kato, \emph{Perturbation Theory for Linear Operators}, 2nd ed.,
Springer-Verlag, Berlin-Heidelberg, New York, 1966.

\bibitem{kor}
J.\ Korevaar, {\em Tauberian Theory: A Century of Developments}, Grundlehren der Mathematischen Wissenschaften, Springer, 2004.

\bibitem{KM_09}
A.\ Kostenko and M.\ Malamud,  {\em 1--D Schr\"odinger operators with local point interactions on a discrete set}, J.\ Differential Equations {\bf 249} (2010), 253--304.

\bibitem{KosMal10}
A.\ Kostenko and M.\ Malamud,  {\em  Schr\"odinger operators with $\delta'$-interactions and the Krein--Stieltjes string}, Doklady Math.\ {\bf 81} (2010), no.~3,  342--347.

\bibitem{KosMal12b}
A.\ Kostenko and M.\ Malamud,  {\em 1--D  Schr\"odinger operators with
local point interactions: a review}, in "Spectral Analysis, Integrable Systems, and Ordinary Differential Equations",
H.\ Holden et al.\ (eds), 235--262, Proc. Symp.  Pure Math. {\bf 87}, Amer.\ Math.\ Soc., Providence, 2013.

\bibitem{KosMal12}
A.\ Kostenko and M.\ Malamud,  {\em  Spectral theory of semibounded Schr\"odinger operators with $\delta'$-interactions}, Ann.\ Henri Poincar\'e {\bf 15} (2014), 501--541.

\bibitem{kur}
P.\ Kurasov, {\em Distribution theory for discontinuous test functions and differential
operators with generalized coefficients}, J.\ Math.\ Anal.\ Appl.\ {\bf 201} (1996), 297--323.

\bibitem{ming}
A.\ B.\ Mingarelli, {\em Volterra-Stieltjes integral equations and generalized ordinary differential expressions},
Lecture Notes in Mathematics {\bf 989}, Springer, Berlin, 1983.

\bibitem{niz}
L.\ P.\ Nizhnik, {\em Schr\"odinger operator with $\delta'$-interaction}, Funct.\ Anal.\ Appl.\ {\bf 37} (2003), no.~1, 85--88.

\bibitem{ogu}
O.\ Ogurisu, {\em On the number of negative eigenvalues of a Schr\"odinger operator with point interactions}, Lett.\ Math.\ Phys.\ {\bf 85} (2008), 129--133.

\bibitem{SavShk99}
A.\ M.\ Savchuk and A.\ A.\ Shkalikov, {\em Sturm--Liouville operators with singular potentials}, Math.\ Notes \textbf{66}  (1999), no. 5-6, 741--753.

\bibitem{SavShk03}
A.\ M.\ Savchuk and A.\ A.\ Shkalikov, {\em Sturm--Liouville operators with distribution potentials}, Trans.\ Moscow Math.\ Soc.\ (2003), 143--190.

\bibitem{tschroe}
G.\ Teschl, {\em Mathematical Methods in Quantum Mechanics; With Applications to Schr\"odinger Operators},
Graduate Studies in Mathematics {\bf 99}, Amer. Math. Soc., Providence, RI, 2009.

\bibitem{volkmer2005}
H.\ Volkmer, {\em Eigenvalue problems of Atkinson, Feller and Krein, and their mutual relationship}, Electron.\ J.\ Differential Equations {\bf 2005}, No.~48, 15 pp.

\bibitem{Ios65}
K.\ Yosida, {\em Functional Analysis}, Springer, Berlin (1980).

\end{thebibliography}
\end{document}